\documentclass[11pt,reqno]{amsart}
\usepackage{amsmath, amsfonts, amsthm, amssymb,amscd, graphicx, amscd}
\usepackage{float,epsf}
\usepackage[english]{babel}
\usepackage{enumerate}
\usepackage{tikz}
\usepackage{mathrsfs}
\usepackage[numbers,sort&compress]{natbib}

\usepackage{srcltx}
\usepackage{geometry}
\geometry{verbose,tmargin=2.8cm,bmargin=2.8cm,lmargin=2.8cm,rmargin=2.8cm}
\usepackage{verbatim}
\usepackage{mathrsfs}
\usepackage{hyperref}
\usepackage{enumitem} 
\usepackage{bbm} 
\usepackage{stmaryrd}

\usepackage{relsize}
\usepackage{exscale}

\usepackage{mathtools}

\newtheorem{thm}{Theorem}[section]
\newtheorem{defn}[thm]{Definition}

\newtheorem{lem}[thm]{Lemma}

\newtheorem{rem}[thm]{Remark}

\numberwithin{equation}{section}

\def\dd{{\rm d}}
\topmargin 0.1in \oddsidemargin 0.1in \evensidemargin 0.1 in
\textwidth 6 in \textheight 8.5in


\setcounter{tocdepth}{1}
\hypersetup{bookmarksdepth=2}

\begin{document}
\title[Quantum Quasi-neutral Limits]{Quantum Quasi-neutral Limits \\and Isothermal Euler Equations}

\author{Immanuel Ben-Porat}
\address{Immanuel Ben-Porat, Mathematical Universit\"at Basel
 Spiegelgasse 1
 CH-4051 Basel, Switzerland.}
\email{immanuel.ben-porath@unibas.ch}

\author{Gui-Qiang G. Chen}
\address{Gui-Qiang G. Chen, Mathematical Institute, University of Oxford, Oxford OX2 6GG, UK.}
\email{chengq@maths.ox.ac.uk}

\author{Difan Yuan}
\address{Difan Yuan,
School of Mathematical Sciences and Laboratory of
Mathematics and Complex Systems, Beijing Normal University; Mathematical Institute, University of Oxford, Oxford OX2 6GG,
UK}
 \email{yuandf@amss.ac.cn}

\keywords{Schr\"odinger-Poisson equations, Mean-field derivation, Quasi-neutral limit, Modulated energy, Isothermal Euler equations, Quantum many-body dynamics}
\subjclass[2010]{35Q83, 82B40, 82D10, 35Q35, 35Q70, 35Q82}

\begin{abstract}
We provide a rigorous justification of the semiclassical quasi-neutral and the quantum many-body limits to the isothermal Euler equations. We consider the nonlinear Schr\"{o}dinger-Poisson-Boltzmann system under a quasi-neutral scaling and establish the convergence of its solutions to the isothermal Euler equations.  
Different from the previous results that dealt with the linear Poisson equations, 
the system under our consideration accounts for the exponential nonlinearity in the potential. 
A modulated energy method is adopted, 
allowing us to derive the stability estimates and asymptotics. 
Furthermore, we focus our analysis on the many-body quantum problem via the von Neumann equation 
and establish a mean-field limit in one dimension by using Serfaty's functional 
inequalities, and thus connecting the quantum many-body dynamics with the macroscopic hydrodynamic equations. 
A refined analysis of the quasi-neutral scaling for the massless systems is presented, 
and the well-posedness of the underlying quantum dynamics is established. 
Moreover, the construction of general admissible initial data is obtained. 
Our results provide a rigorous mathematical analysis for the derivation of quantum hydrodynamic models 
and their limits, contributing to the broader understanding of interactions between 
quantum mechanics and compressible fluid dynamics.
\end{abstract}
\maketitle

\setcounter{tocdepth}{1}
\thispagestyle{empty}

\section{Introduction }
We are concerned with the rigorous justification of the semi-classical and
quantum many-body quasi-neutral limits to the macroscopic systems of nonlinear partial differential equations. Consider the following $d$-dimensional ($d$-D)
\textit{Schr\"odinger-Poisson-Boltzmann system with quasi-neutral scaling}:
\begin{align}
\begin{cases}
i\hbar\partial_{t}\psi_{\varepsilon,\hbar}=-\frac{\hbar^{2}}{2}\Delta \psi_{\varepsilon,\hbar} +V_{\varepsilon,\hbar}\psi_{\varepsilon,\hbar}, \qquad & x\in \mathbb{T}^d,\,t>0,\\[1mm]
-\varepsilon \Delta V_{\varepsilon,\hbar}=\left\vert \psi_{\varepsilon,\hbar} \right\vert^{2}-e^{V_{\varepsilon,\hbar}},& x\in \mathbb{T}^d,\,t>0,\\[1mm]
\psi_{\varepsilon,\hbar}|_{t=0}=\psi^{\mathrm{in}}_{\varepsilon,\hbar},& x\in \mathbb{T}^d.
\end{cases}\label{eq:Hartree VPME}
\end{align}
The unknown $\psi_{\varepsilon,\hbar}(t,\cdot)$ is a time-dependent element of the Hilbert space $\mathfrak{H\coloneqq}L^{2}(\mathbb{T}^{d})$
such that $\int_{\mathbb{T}^{d}}\vert \psi_{\varepsilon,\hbar}(t,x) \vert^{2}\,\dd x=1$. The symbol $\hbar$ represents the Planck constant and should be regarded as a small parameter. The 1-body quantum Hamiltonian $H_{\varepsilon,\hbar}$ is the differential
operator given by
\[
H_{\varepsilon,\hbar}\coloneqq-\frac{\hbar^{2}}{2}\Delta+V_{\varepsilon,\hbar},
\]
so that  \eqref{eq:Hartree VPME} can be written as
\begin{align*}
\begin{cases}
i\hbar\partial_{t}\psi_{\varepsilon,\hbar}=H_{\varepsilon,\hbar}\psi_{\varepsilon,\hbar},& x\in \mathbb{T}^d,\,t>0,\\[1mm]
-\varepsilon \Delta V_{\varepsilon,\hbar}=\left\vert \psi_{\varepsilon,\hbar} \right\vert^{2}-m_{\varepsilon,\hbar},\qquad& x\in \mathbb{T}^d,\,t>0,\\[1mm]
\psi_{\varepsilon,\hbar}|_{t=0}=\psi_{\varepsilon,\hbar}^{\mathrm{in}},& x\in \mathbb{T}^d,
\end{cases}
\end{align*}
where $m_{\varepsilon,\hbar}\coloneqq e^{V_{\varepsilon,\hbar}}$. 

Notice that this system is a variant of the much more well studied Schr\"odinger-Poisson system:
\begin{align}\label{SP eq}
\begin{cases}
i\hbar\partial_{t}\psi_{\varepsilon,\hbar}=-\frac{\hbar^{2}}{2}\Delta \psi_{\varepsilon,\hbar} +V_{\varepsilon,\hbar}\psi_{\varepsilon,\hbar},\qquad & x\in \mathbb{T}^d,\,t>0,\\[1mm]
-\varepsilon \Delta V_{\varepsilon,\hbar}=\left\vert \psi_{\varepsilon,\hbar} \right\vert^{2}-1,& x\in \mathbb{T}^d,\,t>0,\\[1mm]
\psi_{\varepsilon,\hbar}|_{t=0}=\psi^{\mathrm{in}}_{\varepsilon,\hbar},& x\in \mathbb{T}^d.
\end{cases}
\end{align}

The fundamental difference between \eqref{eq:Hartree VPME}  and \eqref{SP eq} is encapsulated 
in the exponential nonlinearity in system \eqref{eq:Hartree VPME}, {\it i.e.}, from the fact that the dynamics are coupled with the so called \textit{Poisson-Boltzman} equation, instead of the \textit{linear Poisson} equation. This nonlinearity is the source of several mathematical obstructions: for instance, while the well-posedness theory of the linear Poisson equation:
$$
-\Delta V=h-1
$$ 
is classical even for $h$ a probability measure, a 
well-posedness theory has been developed only relatively recently for the Poisson-Boltzman 
equation (see \cite{CKS2025,griffin2021global}): 
$$
-\Delta V=h-e^{V},
$$
where $h\in L^{p}$ for arbitrary $p>1$ and dimension $d \geq 1$.  
Thus, the classical well-posedness results for the Schr\"odinger-Poisson system (established for instance in \cite{bove1974existence,ginibre1980class}) are still inadequate for the present settings. 
Therefore, one of our purposes in this paper is to prove the well-posedness of system \eqref{eq:Hartree VPME}. 
For this purpose, we rely heavily on the elliptic PDE theory for the Poisson-Boltzmann equation
developed in \cite{griffin2021global}, where $h$ is a given bounded function. 
The well-posedness of system \eqref{eq:Hartree VPME} is summarized in the following theorem:

\begin{thm}\label{second main thm intro}
Assume $d \in \left\{2,3\right\}$. 
Assume
\begin{align}\label{initialdata}
\psi^{\mathrm{in}}_{\varepsilon,\hbar}\in C^{\infty}(\mathbb{T}^{d}) \qquad 
\mbox{with $\,\,\,\int_{\mathbb{\mathbb{T}}^{d}}\big|\psi^{\mathrm{in}}_{\varepsilon,\hbar}(x)\big|^{2}\,\dd x=1$}.
\end{align}
Then, for each fixed $\varepsilon>0$ and $\hbar>0$, there exists a unique solution $\psi_{\varepsilon,\hbar} \in \mathrm{Lip}([0,T];H^{2}(\mathbb{T}^{d}))$ of the Cauchy problem{\rm :}
\begin{align}\label{well posedness of hartree vpme}
\begin{cases}
i\hbar\partial_{t}\psi_{\varepsilon,\hbar}(t,x)
=-\frac{1}{2}\Delta\psi_{\varepsilon,\hbar}(t,x)+V_{\varepsilon,\hbar}(t,x)\psi_{\varepsilon,\hbar}(t,x),\qquad& x\in \mathbb{T}^d,\,t>0,\\[1mm]
-\varepsilon\Delta V_{\varepsilon,\hbar}(t,x)=\left|\psi_{\varepsilon,\hbar}(t,x)\right|^{2}-e^{V_{\varepsilon,\hbar}(t,x)},& x\in \mathbb{T}^d,\,t>0,\\[1mm]
\psi_{\varepsilon,\hbar}|_{t=0}=\psi^{\mathrm{in}}_{\varepsilon,\hbar}, & x\in \mathbb{T}^d.
\end{cases}
\end{align}
\label{wellposed intro}
\end{thm}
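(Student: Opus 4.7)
The strategy is to decouple \eqref{well posedness of hartree vpme} into its elliptic and dispersive components and close a Picard iteration. Given a candidate trajectory $\psi\in C([0,T];H^{2}(\mathbb{T}^{d}))$, the first step solves the Poisson-Boltzmann equation $-\varepsilon\Delta V = |\psi|^{2}-e^{V}$ at each fixed time, producing a nonlinear operator $\psi\mapsto V[\psi]$; the second step inserts this potential into the linear Schr\"odinger equation $i\hbar\partial_{t}\widetilde\psi = -\frac{\hbar^{2}}{2}\Delta\widetilde\psi + V[\psi]\widetilde\psi$, whose Duhamel mild solution defines the next iterate. The plan is to show that this map is a contraction on a small ball of $C([0,T];H^{2})$ for $T$ sufficiently small, and then to extend globally using conservation of the $L^{2}$-mass.

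The elliptic half rests on the Poisson-Boltzmann theory of \cite{griffin2021global}. Since $d\in\{2,3\}$, the Sobolev embedding yields $|\psi|^{2}\in L^{\infty}$ whenever $\psi\in H^{2}$, so $V[\psi]$ exists uniquely and can be characterized as the minimizer of the strictly convex functional $V\mapsto\int_{\mathbb{T}^{d}}\bigl(\tfrac{\varepsilon}{2}|\nabla V|^{2}-|\psi|^{2}V+e^{V}\bigr)\,\dd x$. The integrated identity $\int e^{V}=\int|\psi|^{2}=1$ together with maximum-principle arguments produces a bound $\|V\|_{L^{\infty}}\lesssim \||\psi|^{2}\|_{L^{\infty}}$, and elliptic bootstrap promotes this to an $H^{s+2}$-bound in terms of $\||\psi|^{2}\|_{H^{s}}$ and $\|V\|_{L^{\infty}}$, using the $H^{s}$-algebra property for $s>d/2$ to control $e^{V}$. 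Lipschitz dependence on $|\psi|^{2}$ is obtained by testing the difference equation $-\varepsilon\Delta(V_{1}-V_{2}) + (e^{V_{1}}-e^{V_{2}}) = |\psi_{1}|^{2}-|\psi_{2}|^{2}$ against $V_{1}-V_{2}$ and using the pointwise monotonicity of the exponential.

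With $V[\psi]$ uniformly controlled in $L^{\infty}\cap H^{2}$, the time-dependent operator $H_{\varepsilon,\hbar}(t)$ is self-adjoint on $L^{2}$ with fixed domain $H^{2}$, so Kato's theory of evolution families produces a strongly continuous unitary propagator; $L^{2}$-mass is automatically conserved, and $H^{2}$-regularity propagates via a commutator estimate exploiting the $H^{2}$-algebra property in $d\le 3$. Converting the Lipschitz bound on $V[\psi_{1}]-V[\psi_{2}]$ through Duhamel yields a contraction on $C([0,T];H^{2})$ for $T$ depending only on $\|\psi^{\mathrm{in}}_{\varepsilon,\hbar}\|_{H^{2}}$, and a Gr\"onwall argument driven by $\|\nabla V\|_{L^{\infty}}$ propagates the $H^{2}$-bound on arbitrary intervals, ruling out blow-up. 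Lipschitz continuity in time of $\psi$ as an $H^{2}$-valued map is then a direct consequence of the equation, since the smoothness of the initial data propagates.

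The hardest point is the faithful transfer of regularity through the exponential nonlinearity: the dispersive contraction requires $V[\psi_{1}]-V[\psi_{2}]$ controlled in a norm strong enough to dominate the potential term $V\widetilde\psi$, yet the natural Poisson-Boltzmann stability estimates lose derivatives unless one carefully commutes $\Delta$ with $e^{V}$. Showing that $V[\psi]$ inherits the $H^{2}$-regularity of $|\psi|^{2}$ with constants that remain uniformly bounded along the iteration—precisely where the constraint $d\le 3$ enters through the $H^{2}$-algebra embedding—is the quantitative core of the argument.
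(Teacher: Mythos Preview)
Your strategy differs substantially from the paper's. You propose a direct Picard iteration in $C([0,T];H^{2})$; the paper instead introduces a double regularization---a spatial mollifier $\chi_{r}$ in the Poisson--Boltzmann coupling and then a temporal mollifier $\zeta_{\delta}$---runs the fixed point in the space $\{\psi\in C([0,T];L^{2}):\|\psi(t)\|_{2}=1\}$ (where only an $L^{\infty}$-stability estimate on $\chi_{r}\star V$ is needed), and afterwards removes both regularizations by Arzel\`a--Ascoli compactness once uniform-in-$(\delta,r)$ bounds up to $H^{4}$ have been established. The mollification buys access to a smooth-potential propagator theorem (Fujiwara) for the linear Schr\"odinger step, sidestepping the time-regularity hypotheses you would otherwise have to verify for Kato's evolution-family theory with a rough $t\mapsto V[\psi(t)]$.

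There is a real gap in your global extension. You assert that $L^{2}$-mass conservation together with a Gr\"onwall argument driven by $\|\nabla V\|_{L^{\infty}}$ propagates the $H^{2}$ bound on arbitrary intervals. But the available Schauder-type bound for the linear part gives only $\|\nabla\tilde V\|_{L^{\infty}}\lesssim 1+\||\psi|^{2}\|_{L^{\infty}}\lesssim 1+\|\psi\|_{H^{2}}^{2}$, so the resulting differential inequality for $\|\psi\|_{H^{2}}^{2}$ is superlinear and does not preclude finite-time blow-up; $L^{2}$-mass alone cannot close this loop. The paper breaks the circularity by first invoking conservation of the \emph{total energy} $\mathcal{F}$, which controls $\|\nabla\psi\|_{2}$ uniformly in time, and then running the $H^{2}$ energy estimate with coefficients $\|\nabla V\|_{4}$ and $\|\Delta V\|_{2}$ in place of $\|\nabla V\|_{L^{\infty}}$; both of these are bounded purely in terms of $\|\psi\|_{H^{1}}$ via Sobolev embedding and the elementary inequality $\|e^{V}\|_{2}\le\|\rho\|_{2}$. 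This yields a \emph{linear} Gr\"onwall and hence a global $H^{2}$ bound. Your outline never invokes the energy, so the mechanism that actually rules out blow-up is missing.
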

As the main focus of this paper is on the semi-classical and quantum many-body limits, 
the proof of Theorem \ref{wellposed intro} is postponed to \S\ref{Well-posedness theory}. 
Our first main result is the rigorous derivation of the isothermal Euler equations: 
\begin{align}\label{Isothermal Euler}
\begin{cases}
\partial_{t}\rho+\mathrm{div}\,(\rho u)=0,\\[1mm]
\partial_{t}(\rho u)+\mathrm{div}\,(\rho u\otimes u)+\nabla\rho=0,
\end{cases}
\end{align}
as a combined semi-classical quasi-neutral limit. 
Before we state this main result, we remark that this problem has already been studied in the context of the usual Schr\"odinger-Poisson system \eqref{SP eq} by Puel \cite{puel2002convergence} for pure states 
and later by Rosenzweig \cite{rosenzweig2021quantum} for general states where the density operator formulation of the Schr\"odinger-Poisson system, also known as the Hartree equations, is considered. 
Motivated by Rosenzweig's approach, we make use of the modulated energy method. 
However, the modulated energy has to be modified in comparison to \cite{rosenzweig2021quantum}: 
The modulated energy used in \cite{rosenzweig2021quantum} is the quantum analogue of the quantity introduced in \cite{brenier2000convergence}, while the modulated energy studied in this paper is the quantum analogue
of the quantity introduced in \cite{han2011quasineutral}. J\"{u}ngel-Wang \cite{Jungel} obtained 
combined semi-classical and quasi-neutral limits in the bipolar defocusing nonlinear Schr\"odinger-Poisson system 
in the whole space. The electron and current density defined by the solution of the Schr\"odinger-Poisson system 
converge to the solution of the compressible Euler equations with nonlinear pressure. We also refer to Golse and Paul \cite{golse2022} for the validity of the joint mean-field and classical limit of the quantum N-body dynamics leading to the pressureless Euler-Poisson system for factorized initial data whose first marginal has a monokinetic Wigner measure. Given a solution $\psi_{\varepsilon,\hbar}$ 
to system \eqref{eq:Hartree VPME}, consider the time-dependent trace-class 
operator  $R_{\varepsilon,\hbar}(t)\coloneqq \left|\psi_{\varepsilon,\hbar}(t)\left\rangle \right\langle \psi_{\varepsilon,\hbar}(t)\right|$ (here the bra-ket $\left|\psi\left\rangle \right\langle \psi\right|$ 
is the rank-1 projection on $\mathfrak{H}$ defined by $\varphi \mapsto \left\langle\psi,\varphi\right\rangle\psi$ in the conventional notation). 
The crux of the argument reduces to obtaining an evolution estimate on the following quantity, called \textit{the modulated energy}:
\begin{align}
\mathcal{\mathcal{E}}_{\varepsilon,\hbar}(t)\coloneqq&\,\frac{1}{2}\sum^{d}_{j=1}\mathrm{tr}\big((i\hbar\partial_{x_{j}}+u^{j})^{2}R_{\varepsilon,\hbar}(t)\big) +\frac{\varepsilon}{2}\int_{\mathbb{T}^{d}}\left|\nabla V_{\varepsilon,\hbar}(t,x)\right|^{2}\,\dd x  \notag\\&+\int_{\mathbb{T}^{d}}\Big(m_{\varepsilon,\hbar}(t,x)\log\big(\frac{m_{\varepsilon,\hbar}(t,x)}{\rho(t,x)}\big)-m_{\varepsilon,\hbar}(t,x)+\rho(t,x)\Big)\,\dd x.
\label{Modulated energy intro}
\end{align}
The \textit{total energy}, denoted by $\mathcal{F}_{\varepsilon,\hbar}(t)$ and defined explicitly in \eqref{total} below, 
is a conserved quantity obtained from $\mathcal{E}_{\varepsilon,\hbar}(t)$ by plugging in $\rho=1$ and $u=0$.  
The quantum current 
is defined by
\begin{align*}
J_{\varepsilon,\hbar}(t,x)
\coloneqq \hbar\,\mathrm{Im}\big(\overline{\psi}_{\varepsilon,\hbar}(t,x)\nabla \psi_{\varepsilon,\hbar}(t,x)\big).
\end{align*}
Our first main result is described in the following theorem:

\begin{thm}
Let $(\rho,u)\in C^{1}([0,T);H^{s-1}(\mathbb{T}^{d}))\times\left(  C([0,T);H^{s}(\mathbb{T}^{d}))\cap C^{1}([0,T)\times \mathbb{T}^{d})\right)$ be the solution of \eqref{Isothermal Euler} with initial data $(\rho_0,u_0)\in \big(H^{s}(\mathbb{T}^{d})\cap\mathcal{P}(\mathbb{T}^{d})\big)\times H^{s}(\mathbb{T}^{d})$ and $\rho_{0}>0$ for some $s>\frac{d}{2}+1$ 
and $T>0$.  Let $\psi_{\varepsilon,\hbar}(t,x)$ be the unique solution in  {\rm Theorem \ref{wellposed intro}} with the initial data $\psi^{\mathrm{in}}_{\varepsilon,\hbar}(x)$ satisfying \eqref{initialdata}.  
Assume that there is some constant $\mathcal{F}_{0}$ 
such that $\mathcal{F}_{\varepsilon,\hbar}(0)\leq \mathcal{F}_{0}$ uniformly in $(\varepsilon,\hbar)$. 
Let $\mathcal{E}_{\varepsilon,\hbar}(t)$ be given by \eqref{Modulated energy intro}. Then
\begin{align*}
    \mathcal{E}_{\varepsilon,\hbar}(t)\leq e^{Ct}\left(\mathcal{E}_{\varepsilon,\hbar}(0)+\sqrt{\varepsilon}\right) 
    \qquad \text{ for all $t\in [0,T]$,}
\end{align*}
where
 $C=C(\Vert \nabla u\Vert _{L^\infty_{t,x}},\Vert \log(\rho)\Vert _{W_{t}^{1,\infty}H_{x}^{1}},
 \Vert \nabla(u\cdot\nabla\log(\rho))\Vert _{L^{\infty}_{t}L^{2}_{x}},T,\mathcal{F}_{0}).$
 Consequently,
 \begin{align*}
  \underset{t \in [0,T]}{\sup} \mathcal{E}_{\varepsilon,\hbar}(t)\underset{\varepsilon+\hbar\rightarrow0}{\longrightarrow}  0
 \end{align*}
provided $ \mathcal{E}_{\varepsilon,\hbar}(0)\underset{\varepsilon+\hbar\rightarrow0}{\longrightarrow}0$, and
\[
(\rho_{\varepsilon,\hbar},J_{\varepsilon,\hbar})(t,\cdot)
\rightharpoonup (\rho, \rho u)(t,\cdot)
\]
for the narrow topology of signed Radon measures on $\mathbb{T}^{d}$, as $\varepsilon+\hbar\rightarrow0$ locally uniformly in time. 
\label{1body main intro}
\end{thm}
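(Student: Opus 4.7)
The plan is to run a modulated-energy Gr\"onwall argument adapted to handle the Poisson--Boltzmann nonlinearity. As a preliminary, I would exploit the conservation of the total energy $\mathcal{F}_{\varepsilon,\hbar}(t)=\mathcal{F}_{\varepsilon,\hbar}(0)\leq \mathcal{F}_0$ to obtain uniform-in-$(\varepsilon,\hbar)$ bounds on $\hbar^2\|\nabla\psi_{\varepsilon,\hbar}\|_{L^2}^2$, on $\varepsilon\|\nabla V_{\varepsilon,\hbar}\|_{L^2}^2$, and on the entropy of $m_{\varepsilon,\hbar}$. Expanding the trace in the first summand of \eqref{Modulated energy intro} yields the fluid-type identity $\frac{1}{2}\sum_{j=1}^{d}\mathrm{tr}\bigl((i\hbar\partial_{x_{j}}+u^{j})^{2}R_{\varepsilon,\hbar}\bigr)=\frac{\hbar^{2}}{2}\|\nabla\psi_{\varepsilon,\hbar}\|_{L^{2}}^{2}-\int u\cdot J_{\varepsilon,\hbar}\,\dd x+\frac{1}{2}\int|u|^{2}\rho_{\varepsilon,\hbar}\,\dd x$, so $\mathcal{E}_{\varepsilon,\hbar}$ decomposes as $\mathcal{F}_{\varepsilon,\hbar}$ plus cross terms in $(u,\rho)$ plus a relative-entropy piece, and its time evolution reduces to standard Brenier-type algebra.

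Next, I would differentiate $\mathcal{E}_{\varepsilon,\hbar}(t)$ in time, using the Schr\"odinger equation together with the quantum continuity identity $\partial_t\rho_{\varepsilon,\hbar}+\mathrm{div}\,J_{\varepsilon,\hbar}=0$ for the kinetic summand, the time-differentiated Poisson--Boltzmann relation $-\varepsilon\Delta(\partial_t V_{\varepsilon,\hbar})=\partial_t\rho_{\varepsilon,\hbar}-m_{\varepsilon,\hbar}\partial_t V_{\varepsilon,\hbar}$ for the field piece, and the isothermal Euler equations \eqref{Isothermal Euler} to eliminate $\partial_t u$ and $\partial_t\rho$ from the cross and entropy terms. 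The principal contributions should cancel, leaving a remainder of three types: (i) quadratic terms controlled by the norms of $\nabla u$, $\log\rho$, and $u\cdot\nabla\log\rho$ listed in the statement, times $\mathcal{E}_{\varepsilon,\hbar}$; (ii) commutator contributions of the form $\mathrm{tr}([H_{\varepsilon,\hbar},\cdot\,]R_{\varepsilon,\hbar})$ whose expansion produces only quantities already absorbed in the modulated kinetic piece (this is where the Han-Kwan choice of modulation, matched to the pressure law $p(\rho)=\rho$ of \eqref{Isothermal Euler}, replaces the Brenier modulation used in \cite{rosenzweig2021quantum}); and (iii) a consistency error arising from the discrepancy $\rho_{\varepsilon,\hbar}-m_{\varepsilon,\hbar}$.

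I expect the main obstacle to be controlling this consistency error, which is where the exponential nonlinearity forces an essential departure from \cite{rosenzweig2021quantum}. The saving observation is that the stationary Poisson--Boltzmann equation gives $\rho_{\varepsilon,\hbar}-m_{\varepsilon,\hbar}=-\varepsilon\Delta V_{\varepsilon,\hbar}$, so every residual term $\int F(\rho_{\varepsilon,\hbar}-m_{\varepsilon,\hbar})\,\dd x$ equals $\varepsilon\int\nabla F\cdot\nabla V_{\varepsilon,\hbar}\,\dd x$ and, by Cauchy--Schwarz combined with the total-energy bound $\varepsilon\|\nabla V_{\varepsilon,\hbar}\|_{L^{2}}^{2}\leq 2\mathcal{F}_{0}$, is bounded by $C\sqrt{\varepsilon}\,\|\nabla F\|_{L^{2}}$. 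Applying this device with the specific test functions $F$ built out of $\log\rho$ and $u\cdot\nabla\log\rho$ that surface after the cancellations explains the precise dependence of $C$ on the norms listed in the theorem and produces the differential inequality $\frac{d}{dt}\mathcal{E}_{\varepsilon,\hbar}(t)\leq C\,\mathcal{E}_{\varepsilon,\hbar}(t)+C\sqrt{\varepsilon}$, whence Gr\"onwall delivers the stated estimate.

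For the concluding weak convergence, I would use that the relative-entropy summand of $\mathcal{E}_{\varepsilon,\hbar}$ is a Csisz\'ar--Kullback functional, so its vanishing forces $\|m_{\varepsilon,\hbar}-\rho\|_{L^{1}}\to 0$; combined with $\|\rho_{\varepsilon,\hbar}-m_{\varepsilon,\hbar}\|_{H^{-1}}\leq C\sqrt{\varepsilon}$ this yields $\rho_{\varepsilon,\hbar}\rightharpoonup\rho$ narrowly. Simultaneously, after a Madelung-type rewriting, the modulated kinetic summand dominates a genuine quadratic form controlling $J_{\varepsilon,\hbar}-\rho_{\varepsilon,\hbar}u$ tested against smooth functions; composing with the density convergence gives $J_{\varepsilon,\hbar}\rightharpoonup\rho u$ in the narrow topology of signed Radon measures, locally uniformly for $t\in[0,T]$.
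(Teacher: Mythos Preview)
Your proposal is correct and follows essentially the same route as the paper: decompose $\mathcal{E}_{\varepsilon,\hbar}$ as $\mathcal{F}_{\varepsilon,\hbar}$ plus cross terms, exploit conservation of $\mathcal{F}_{\varepsilon,\hbar}$, use the isothermal Euler equations to cancel the principal contributions, and bound the residuals via $\rho_{\varepsilon,\hbar}-m_{\varepsilon,\hbar}=-\varepsilon\Delta V_{\varepsilon,\hbar}$ together with Cauchy--Schwarz and the energy bound on $\sqrt{\varepsilon}\,\nabla V_{\varepsilon,\hbar}$; the convergence argument via Csisz\'ar--Kullback--Pinsker plus the $\dot H^{-1}$ control is exactly the paper's Step~5. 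One minor caveat: the paper does not obtain a literal pointwise inequality $\frac{d}{dt}\mathcal{E}_{\varepsilon,\hbar}\le C\mathcal{E}_{\varepsilon,\hbar}+C\sqrt{\varepsilon}$ because one residual term carries $\partial_t\Delta V_{\varepsilon,\hbar}$ and is handled by an integration by parts in \emph{time}, so what one actually closes is the integrated inequality before applying Gr\"onwall.
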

 
The initial data $\psi^{\mathrm{in}}_{\varepsilon,\hbar}$ can be constructed such that $\mathcal{E}_{\varepsilon,\hbar}(0)\underset{\varepsilon+\hbar\rightarrow0}{\longrightarrow}0$. We will elaborate on this in \S\ref{well prepared sec}. \label{rem abot inital data}

The stability estimate and the implied weak convergence are valid for arbitrarily large-time intervals. 
However, the existence and uniqueness of a smooth solution to the isothermal Euler equations are known 
to be true only for a short-time interval.  \label{rem about short time}

\smallskip
Our second main result concerns the $N$-body problem. Given a configuration $X_{N}=(x_{1},\cdots,x_{N})\in\mathbb{T}^{N},$ denote by $\mu_{X_{N}}$ the empirical measure centered at $X_{N}$, {\it i.e.,}
\begin{align*}
\mu_{X_{N}}\coloneqq \frac{1}{N}\sum^{N}_{j=1} \delta_{x_{j}}.
\end{align*}
In variance with the quasi-neutral quantum many-body limit established in \cite{rosenzweig2021quantum}, 
our main result in this context is limited to the 1-D settings. 
Let $V_{\varepsilon,X_{N}}$ be the solution to the equation:
\[
-\varepsilon\Delta V_{\varepsilon,X_{N}}=\mu_{X_{N}}-m_{\varepsilon,X_{N}}
\qquad \mbox{with $\, m_{\varepsilon,X_{N}}=e^{V_{\varepsilon,X_{N}}}$}.
\]
The $N$-body quantum Hamiltonian is the differential operator acting
on the Hilbert space $\mathfrak{H}^{\otimes N}=L^{2}(\mathbb{T}^{N})$ given
by
\begin{align*}
\mathscr{H}_{\varepsilon,\hbar,N}(X_{N})&\coloneqq-\frac{\hbar^{2}}{2}\sum^N_{k=1}\Delta_{x_{k}}+N\int_{\mathbb{T}}V_{\varepsilon,X_{N}}(x)m_{\varepsilon,X_{N}}(x)\,\dd x\nonumber\\
&\quad\,\,\,+\frac{N}{2\varepsilon}\int_{\mathbb{T}\times\mathbb{T}}K(x-y)(\mu_{X_{N}}-m_{\varepsilon,X_{N}})^{\otimes2}(\dd x\dd y)\\
&\coloneqq \mathscr{K}_{N,\hbar}+\mathscr{I}_{\varepsilon,X_{N}}+\mathscr{V}_{\varepsilon,X_{N}}.
\end{align*}
Here, $K$ designates the Green function of the Laplacian. In 1-D, the Green function admits the explicit formula $K(x)=\frac{x^2-\left\vert x \right\vert}{2}$. Note that the terms
$\mathscr{I}_{\varepsilon,X_{N}}$ and $\mathscr{V}_{\varepsilon,X_{N}}$ are viewed as a multiplication operator
in the variable $X_{N}$. The Cauchy problem for the $N$-body von Neumann equation is given by
\begin{equation}\label{eq:von N intro}
 \begin{cases}
i\hbar\partial_{t}R_{\varepsilon,\hbar,N}(t)=\left[\mathscr{H}_{\varepsilon,\hbar,N},R_{\varepsilon,\hbar,N}(t)\right],\\[1mm]
R_{\varepsilon,\hbar,N}|_{t=0}=R^\mathrm{in}_{\varepsilon,\hbar,N}.
\end{cases}
\end{equation}
The unknown $R_{\varepsilon,\hbar,N}$ is a time-dependent symmetric density operator, 
and $[\,\cdot,\cdot\,]$ designates the commutator defined by $[A,B]\coloneqq AB-BA$ 
for any given operators $A$ and $B$. By a \textit{density operator}, 
we mean a bounded operator $R$ such that $R$ is self-adjoint non-negative ($R=R^{*}\geq 0$) 
and $\mathrm{tr}(R)=1$. By a \textit{symmetric operator}  on $\mathfrak{H}^{\otimes N}$ 
we mean an operator $R_{N}$ such that, for all permutations $\sigma \in \mathfrak{S}_{N}$ 
($\mathfrak{S}_{N}$ is the symmetric group on $N$ elements), 
\begin{align*}
U_{\sigma}R_{N}U_{\sigma}^{\ast}=R_{N},
\end{align*}
where  $U_{\sigma}$ is the operator on $\mathfrak{H}^{\otimes N}$ defined by
\begin{align*}
(U_{\sigma}\Psi_{N})(x_{1},\cdots,x_{N})\coloneqq\Psi_{N}(x_{\sigma^{-1}(1)}, \cdots, x_{\sigma^{-1}(N)}) 
\qquad \text{ for any } \Psi_{N}\in \mathfrak{H}^{\otimes N}.
\end{align*}
We denote the class of density operators on $\mathfrak{H}$ by $\mathcal{D}(\mathfrak{H})$ and the class of symmetric density operators on $\mathfrak{H}^{\otimes N}$ by $\mathcal{D}_{s}(\mathfrak{H}^{\otimes N})$. 
If $R_{N}\in \mathcal{D}_{s}(\mathfrak{H}^{\otimes N})$ with kernel $k(X_{N},Y_{N})$, 
then  $\rho_{N}\in L^{1}(\mathbb{T}^{N})$ is denoted as the function 
defined by $\rho_{N}(X_{N})\coloneqq k(X_{N},X_{N})$. 
It is well known that $\rho_{N}$ is a symmetric probability density on $\mathbb{T}^{N}$
(see the footnote pp. 61--62 in \cite{golse2017schrodinger} for more details). 
We call $\rho_{N}$ the \textit{density} associated with $R_{N}$.
Hereafter, we denote by $\rho_{\varepsilon,\hbar,N}(t)$ the density associated with the operator $R_{\varepsilon,\hbar,N}(t)$.
With these definition, we can define the following  modulated energy:
\begin{align*}
&\mathcal{\mathcal{E}}_{\varepsilon,\hbar,N}(t)\\
&\coloneqq\,\frac{1}{2N}\sum^{N}_{j=1}\mathrm{tr}\big((i\hbar\partial_{x_{j}}+u(t,x_{j}))^{2}R_{\varepsilon,\hbar,N}(t)\big)\\
&\quad\,\,+\frac{1}{2\varepsilon}\int_{\mathbb{T}^{N}}\int_{\mathbb{T}\times\mathbb{T}}K(x-y)(\mu_{X_{N}}-m_{\varepsilon,X_{N}})^{\otimes2}(\dd x\dd y)\,\rho_{\varepsilon,\hbar,N}(t,X_{N})\,\dd X_{N}\\
&\quad\,\,+\int_{\mathbb{T}^{N}}\int_{\mathbb{T}}\Big(m_{\varepsilon,X_{N}}(t,x)\log\big(\frac{m_{\varepsilon,X_{N}}(t,x)}{\rho(t,x)}\big)
-m_{\varepsilon,X_{N}}(t,x)+\rho(t,x)\Big)\,\dd x\,\rho_{\varepsilon,\hbar,N}(t,X_{N})\,\dd X_{N}.
\end{align*}
The total energy of the system is denoted by $\mathcal{F}_{\varepsilon,\hbar,N}(t)$ (see \eqref{totalN} below).
We can also define an $N$-body analogue of the current that is denoted 
by $J_{\varepsilon,\hbar,N}$ (see Definition \ref{def of current} below). 
Let $R_{\varepsilon,\hbar,N:1}(t)$ be its first marginal density (see Definition \ref{def of marginal} below for the definition of the marginal). 
Denote the density function and the current of $R_{\varepsilon,\hbar,N:1}(t)$ by $\rho_{\varepsilon,\hbar,N:1}(t,\cdot)$ and $J_{\varepsilon,\hbar,N:1}(t,\cdot)$, respectively.

Our second main result is summarized below:

\begin{thm}
Let $(\rho,u)\in C^{1}([0,T);H^{s-1}(\mathbb{T}))\times\left(  C([0,T);H^{s}(\mathbb{T}))\cap C^{1}([0,T)\times \mathbb{T})\right)$ be the solution of \eqref{Isothermal Euler} with initial data $(\rho_0,u_0)\in \big(H^{s}(\mathbb{T})\cap\mathcal{P}(\mathbb{T})\big)\times H^{s}(\mathbb{T})$ and $\rho_{0}>0$ for some $s>\frac{3}{2}$ and $T>0$. Let $R_{\varepsilon,\hbar,N}^{\mathrm{in}}\in \mathcal{D}_{s}(\mathfrak{H}^{\otimes N})$ and
$\mathrm{tr}\big((-\Delta_{N})^{2}R_{\varepsilon,\hbar,N}^{\mathrm{in}})<\infty$.
Let $R_{\varepsilon,\hbar,N}(t)$ be the solution
to \eqref{eq:von N intro} $($ensured by {\rm Lemma \ref{Kato thm }}$)$. Assume that there is some constant $\mathcal{F}_{0}$ such that $\mathcal{F}_{\varepsilon,\hbar,N}(0)\leq \mathcal{F}_{0}$ uniformly in $(\varepsilon,\hbar,N)$.
Then
\[
\mathcal{E}_{\varepsilon,\hbar,N}(t)\leq e^{Ct}\Big(\mathcal{E}_{\varepsilon,\hbar,N}(0)+N^{-\lambda}+\sqrt{\varepsilon}+\frac{e^{\frac{1}{\varepsilon}}}{\varepsilon N^{2}}\Big) \qquad\,\,\mbox{ for all $t\in [0,T]$},
\]
where $C=C(T,\Vert u\Vert_{L^{\infty}W^{1,\infty}}, \Vert \log(\rho)\Vert _{W_{t,x}^{1,\infty}},\mathcal{F}_{0})$ 
and $\lambda>0$. 
Consequently,  
\begin{align*}
  \underset{t \in [0,T]}{\sup} \mathcal{E}_{\varepsilon,\hbar,N}(t)\underset{\varepsilon+\hbar+\frac{1}{N}\rightarrow0}{\longrightarrow}   0,
 \end{align*}
provided $ \mathcal{E}_{\varepsilon,\hbar,N}(0)\underset{\varepsilon+\hbar+\frac{1}{N}\rightarrow0}{\rightarrow}0$ and  $\varepsilon=\varepsilon(N)$ is such that $\frac{e^{\frac{1}{\varepsilon}}}{\varepsilon N^{2}}\underset{N\rightarrow\infty}{\rightarrow}0$,
and 
\[
(\rho_{\varepsilon,\hbar,N:1},\, J_{\varepsilon,\hbar,N:1})(t,\cdot)
\,\rightharpoonup \, (\rho,\, \rho u)(t,\cdot)
\]
for the narrow topology of signed Radon measures on $\mathbb{T}$, as $\varepsilon+\hbar+\frac{1}{N}\rightarrow0$ locally uniformly
in time.
\label{3rd main result intro}
\end{thm}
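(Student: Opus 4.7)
The proof follows the modulated energy strategy of Rosenzweig \cite{rosenzweig2021quantum}, suitably adapted to accommodate the exponential nonlinearity coming from the Poisson-Boltzmann structure and the $N$-body quantum setting. The plan is to differentiate $\mathcal{E}_{\varepsilon,\hbar,N}(t)$ in time, substitute the von Neumann equation \eqref{eq:von N intro} for $R_{\varepsilon,\hbar,N}$ and the isothermal Euler equations \eqref{Isothermal Euler} for $(\rho,u)$, and rearrange the result so that $\frac{\dd}{\dd t}\mathcal{E}_{\varepsilon,\hbar,N}(t)$ splits into a ``good'' part controlled by $C\,\mathcal{E}_{\varepsilon,\hbar,N}(t)$ and a ``remainder'' of size $N^{-\lambda}+\sqrt{\varepsilon}+\tfrac{e^{1/\varepsilon}}{\varepsilon N^{2}}$. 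Once such a differential inequality is obtained, Grönwall's lemma combined with the uniform total-energy bound $\mathcal{F}_{\varepsilon,\hbar,N}(0)\leq \mathcal{F}_{0}$ yields the main estimate; the narrow convergence of $(\rho_{\varepsilon,\hbar,N:1},J_{\varepsilon,\hbar,N:1})$ to $(\rho,\rho u)$ then follows by projecting onto the first marginal and applying a Cauchy-Schwarz argument bounding $\|J_{\varepsilon,\hbar,N:1}-\rho_{\varepsilon,\hbar,N:1}u\|_{L^{1}}$ and $\|\rho_{\varepsilon,\hbar,N:1}-\rho\|_{L^{1}}$ in terms of $\sqrt{\mathcal{E}_{\varepsilon,\hbar,N}(t)}$, exactly as in the 1-body Theorem \ref{1body main intro}.

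For the modulated kinetic term $\frac{1}{2N}\sum_{j}\mathrm{tr}\bigl((i\hbar\partial_{x_{j}}+u(t,x_{j}))^{2}R_{\varepsilon,\hbar,N}(t)\bigr)$, I would first compute the commutator with $\mathscr{H}_{\varepsilon,\hbar,N}$: the kinetic piece $\mathscr{K}_{N,\hbar}$ produces a transport-type contribution involving $\nabla u$, while $\mathscr{V}_{\varepsilon,X_{N}}$ creates a force term matched against $\nabla V_{\varepsilon,X_{N}}$. The explicit time derivative of $u(t,x_{j})$ is rewritten using the momentum Euler equation $\partial_{t}u+(u\cdot\nabla)u+\nabla\log\rho=0$, and the $\nabla\log\rho$ contribution combines naturally with the derivative of the relative-entropy term $m_{\varepsilon,X_{N}}\log(m_{\varepsilon,X_{N}}/\rho)$, the latter being computed via the chain rule together with the continuity equation $\partial_{t}\rho+\mathrm{div}(\rho u)=0$. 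For the interaction term, differentiation under the integral picks up both the explicit $X_{N}$-dependence of $V_{\varepsilon,X_{N}}$ and a contribution from the conservation of $\rho_{\varepsilon,\hbar,N}(t,X_{N})$ under the quantum flow; the key identity $-\varepsilon\Delta V_{\varepsilon,X_{N}}=\mu_{X_{N}}-m_{\varepsilon,X_{N}}$ and integration by parts allow one to re-express everything in terms of the Poisson-Boltzmann potential differences $\nabla V_{\varepsilon,X_{N}}$.

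The central obstacle will be controlling the resulting cross term of the shape $\int_{\mathbb{T}^N}\int_{\mathbb{T}\times\mathbb{T}}\bigl(u(t,x)-u(t,y)\bigr)\cdot\nabla K(x-y)(\mu_{X_{N}}-m_{\varepsilon,X_{N}})^{\otimes 2}(\dd x\dd y)\,\rho_{\varepsilon,\hbar,N}(t,X_{N})\,\dd X_{N}$, which, in the classical quasi-neutral setting, is handled by Serfaty's functional inequality. Its adaptation here is delicate because $K$ is the 1-D Green function (logarithmic in the sense of a Coulomb gas modulated energy) and the neutralizing background $m_{\varepsilon,X_{N}}=e^{V_{\varepsilon,X_{N}}}$ depends nonlinearly on the configuration through the Poisson-Boltzmann PDE. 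I would bound this cross term by $C\bigl(\mathcal{E}_{\varepsilon,\hbar,N}(t)+N^{-\lambda}\bigr)$ using a 1-D commutator estimate for the modulated energy together with the uniform $L^{\infty}$- and $H^{1}$-bounds on $V_{\varepsilon,X_{N}}$ provided by the elliptic theory of \cite{griffin2021global}. These bounds depend exponentially on $1/\varepsilon$ through a maximum-principle argument on $-\varepsilon\Delta V=\mu_{X_N}-e^V$, and their use in the self-interaction correction (the diagonal contribution when replacing $\mu_{X_{N}}^{\otimes 2}$ by its off-diagonal part) produces the error term $\frac{e^{1/\varepsilon}}{\varepsilon N^{2}}$. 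The $\sqrt{\varepsilon}$ correction arises from interpolating the macroscopic neutrality gap $\|m_{\varepsilon,X_{N}}-\rho\|$ against the quasi-neutral energy $\varepsilon\|\nabla V_{\varepsilon,X_{N}}\|_{L^{2}}^{2}$ hidden in $\mathcal{F}_{\varepsilon,\hbar,N}$.

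Closing the Grönwall step relies on the coercivity $\mathcal{E}_{\varepsilon,\hbar,N}(t)\geq 0$ (the relative-entropy part is non-negative by Jensen's inequality, the interaction term is non-negative since $K$ is of positive type up to a constant, and the quantum kinetic term is a sum of squares), and on the $W^{1,\infty}$ regularity of $(u,\log\rho)$ inherited from the smoothness of the Euler solution on $[0,T]$. The constraint $\frac{e^{1/\varepsilon}}{\varepsilon N^{2}}\to 0$ in the statement is then a direct consequence of this remainder term, and passage to weak limits is standard once $\mathcal{E}_{\varepsilon,\hbar,N}(t)\to 0$ uniformly in $t\in[0,T]$.
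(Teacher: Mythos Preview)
Your proposal is correct and follows essentially the same route as the paper: differentiate $\mathcal{E}_{\varepsilon,\hbar,N}$, exploit conservation of $\mathcal{F}_{\varepsilon,\hbar,N}$, compute the commutators with $\mathscr{H}_{\varepsilon,\hbar,N}$, substitute the isothermal Euler equations, and close by Gr\"onwall after bounding the $(u(x)-u(y))K'(x-y)$ cross term with Serfaty's commutator estimate and the coercivity inequality, the $\|m_{\varepsilon,X_N}\|_\infty\le e^{1/\varepsilon}$ bound producing the $\tfrac{e^{1/\varepsilon}}{\varepsilon N^2}$ error. Two small points to sharpen: (i) the $\sqrt{\varepsilon}$ remainder in the paper does not come from a neutrality-gap interpolation but from integrating the term containing $\partial_t V_{\varepsilon,X_N}''$ by parts \emph{in time} and then applying Young's inequality with weight $\sqrt{\varepsilon}$ to the resulting $\varepsilon\int V_{\varepsilon,X_N}''\varphi$ integrals; (ii) the convergence of $\rho_{\varepsilon,\hbar,N:1}$ is not ``exactly as in the 1-body case'' --- there is no Poisson equation linking $\rho_{\varepsilon,\hbar,N:1}$ directly to an averaged $m$, so the paper inserts the intermediate density $\tilde m_{\varepsilon,\hbar,N}(t,x)=\int_{\mathbb{T}^N}m_{\varepsilon,X_N}(x)\rho_{\varepsilon,\hbar,N}(t,X_N)\,\dd X_N$, bounds $W_1(\rho_{\varepsilon,\hbar,N:1},\tilde m)$ via Serfaty's coercivity inequality, and then bounds $\|\tilde m-\rho\|_{L^1}$ via Csisz\'ar--Kullback--Pinsker applied $X_N$-pointwise.
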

A curious feature of the proof of Theorem \ref{3rd main result intro} is that 
it exploits the commutator estimates of Serfaty \cite{duerinckx2020mean}. 
These commutator estimates also played a decisive role in the derivation of the incompressible Euler equations  
as a mean-field quasi-neutral limit in \cite{han2021} or as a quantum many-body quasi-neutral limit in \cite{rosenzweig2021quantum}.   
However, this approach, which is a reminiscent of the {\it renormalized energy} method, 
is absent from the literature on the mean-field quasi-neutral limit of massless Vlasov-Poisson-like equations; 
in this sense, it reflects a novelty of the method. 
The constraint that $\varepsilon(N)$ vanishes sufficiently slowly in $N$ is natural. 
Note, however, that in our setting, the vanishing rate of $\varepsilon$ is logarithmic in $N$, 
whereas it is polynomial in $N$ in the usual quasi-neutral limit (considered, for example, in \cite{rosenzweig2021quantum}). This slower decay rate emerges from the $L^{\infty}$ bound on $e^{V_{\varepsilon,X_N}}$ in the argument. \\

To put our results in an appropriate context, we close this introduction by providing 
a brief overview of the classical quasi-neutral limit. There has been an extensive study of the classical 
quasi-neutral limit from the Vlasov-Poisson equations with massless electrons (VPME), 
also known as the Vlasov-Poisson system for ions, that is a mathematical model from plasma physics, 
which can be described
through the following kinetic equations:
\begin{align}
\begin{cases}
\partial_{t}f+\xi\cdot\nabla_{x}f-\nabla V\cdot\nabla_{\xi}f=0,\\[1mm]
-\Delta V=\int_{\mathbb{R}^{d}}f(t,x,\xi)\,\dd \xi-e^{V},
\end{cases}\label{eq:VPME INTRO}
\end{align}
with $f(0,x,\xi)=f^{0}\geq0$ and 
$\int_{\mathbb{T}^{d}\times\mathbb{R}^{d}}f^{0}(x,\xi)\,\dd x\dd \xi=1$.
The unknown $f:[0,T]\times \mathbb{T}^{d}\times\mathbb{R}^{d}\rightarrow\mathbb{R}$
is a time-dependent probability density $f(t,\cdot,\cdot)\in\mathcal{P}(\mathbb{T}^{d}\times\mathbb{R}^{d})$.
This system is a variant of the well-known  Vlasov-Poisson (VP) system that reads
\begin{align}
\begin{cases}
\partial_{t}f+\xi\cdot\nabla_{x}f -\nabla V\cdot\nabla_{\xi}f=0,\\[1mm]
-\Delta V=\int_{\mathbb{R}^{d}}f(t,x,\xi)\,\dd \xi-1,
\end{cases}\label{eq:VP INTRO}
\end{align}
with $f(0,x,\xi)=f^{0}\geq0$ and 
$\int_{\mathbb{T}^{d}\times\mathbb{R}^{d}}f^{0}(x,\xi)\,\dd x\dd \xi=1$.

As before, the notable difference between \eqref{eq:VPME INTRO} and
\eqref{eq:VP INTRO} is reflected in the fact that the Poisson equation
coupled to the Vlasov equation is nonlinear in \eqref{eq:VPME INTRO}, where it is linear and explicitly 
solvable in \eqref{eq:VP INTRO}. 
The variance between \eqref{eq:VPME INTRO} and \eqref{eq:VP INTRO} is demonstrated
in a decisive manner already at the level of well-posedness: the well-posedness theory
on the entire space and the periodic case of the Vlasov-Poisson system
has been established in the 1990s in Lions-Perthame \cite{lions1991propagation} 
and Batt-Rein \cite{batt1991global}, respectively; 
also see \cite{griffin2021recent} for an overview of recent developments in the well-posedness theory of
the Vlasov equations of this type.
Only much later has the question of existence and uniqueness for the case of 
the VPME system \eqref{eq:VPME INTRO} has been settled: the existence of global weak solutions has been proved in \cite{han2014quasineutral} in the periodic 1-D case. 
Later, global well-posedness is proved in the torus in \cite{griffin2021global}.      
Endowed with this well-posedness, it is a natural question to consider hydrodynamical limits 
associated with this system, particularly the quasi-neutral limit.
As was done in the quantum regime considered before, 
we write the VP system and the VPME system with quasi-neutral scaling. 
The VPME system with quasi-neutral scaling reads
\begin{align}\label{eq:VPME WITH QUASINEUTRAL SCALING INTRO}
\begin{cases}
\partial_{t}f_{\varepsilon}+\xi\cdot\nabla_{x}f_{\varepsilon}-\nabla V_{\varepsilon}\cdot\nabla_{\xi}f_{\varepsilon}=0,\\[1mm]
-\varepsilon\Delta V_{\varepsilon}=\int_{\mathbb{R}^{d}}f_{\varepsilon}(t,x,\xi)\,\dd \xi-e^{V_{\varepsilon}},\\
\end{cases}
\end{align}
with $f_{\varepsilon}(0,x,\xi)=f_{\varepsilon}^{0}\geq0$ and 
$\int_{\mathbb{T}^{d}\times\mathbb{R}^{d}}f_{\varepsilon}^{0}(x,\xi)\,\dd x\dd \xi=1,$
while the VP system with quasi-neutral scaling reads
\begin{align}\label{eq:VP WITH QUASINEUTRAL INTRO}
\begin{cases}
\partial_{t}f_{\varepsilon}+\xi\cdot\nabla_{x}f_{\varepsilon}-\nabla V_{\varepsilon}\cdot\nabla_{\xi}f_{\varepsilon}=0,\\[1mm]
-\varepsilon\Delta V_{\varepsilon}=\int_{\mathbb{R}^{d}}f_{\varepsilon}(t,x,\xi)\,\dd \xi-1,\\
\end{cases}
\end{align}
with $f_{\varepsilon}(0,x,\xi)=f_{\varepsilon}^{0}\geq0$ 
and $\int_{\mathbb{T}^{d}\times\mathbb{R}^{d}}f_{\varepsilon}^{0}(x,\xi)\,\dd x\dd \xi=1.$

In the classical quasi-neutral limit, one seeks to study the convergence of $f_{\varepsilon}$ 
as $\varepsilon \rightarrow0$. 
Formal considerations suggest, as the limit system, the incompressible Euler equations in the 2-D VP case
and the isothermal Euler equations in the VPME case. 
More precisely, given a solution $f_{\varepsilon}(t,x,\xi)$ to system \eqref{eq:VPME WITH QUASINEUTRAL SCALING INTRO}  
or \eqref{eq:VP WITH QUASINEUTRAL INTRO}, consider the density
\begin{align}
    \rho_{\varepsilon}(t,x)\coloneqq \int_{\mathbb{R}^{d}} f_{\varepsilon}(t,x,\xi)\,\dd \xi
    \end{align}
and the current
\begin{align}
 J_{\varepsilon}(t,x):= \int_{\mathbb{R}^{d}} \xi f_{\varepsilon}(t,x,\xi)\,\dd \xi.
 \label{classical current}
\end{align}
Then, in the VPME case, one expects the limit system to be the isothermal Euler equations \eqref{Isothermal Euler}
in the sense that the convergences: $\rho_{\varepsilon}(t,\cdot)\rightarrow \rho(t,\cdot)$
and $J_{\varepsilon}(t,\cdot)\rightarrow (\rho u)(t,\cdot)$ is  propagated in time. 
In the VP case, the expected limit system is the incompressible Euler equations:
\begin{align}\label{Incompressible Euler}
\begin{cases}
\rho=1,\\
\partial_{t}u+u\cdot\nabla_{x}u+\nabla_{x}p=0,\\
\mathrm{div}\,u=0.
\end{cases}
\end{align}

Therefore, the quasi-neutral limit offers a derivation of fluid equations from kinetic equations. 
The rigorous justification of the quasi-neutral limit for VP has been initiated in
Brenier-Grenier  \cite{brenier1994limite} and Brenier \cite{brenier2000convergence}. See
also Golse-Saint-Raymond \cite{golse1999vlasov,golse2003vlasov}
for an approach involving compactness arguments. 
Since we are mostly concerned with the quasi-neutral limit for quantum analogues of VPME, 
we will not dwell further on this limit for VP.  Various works have been dedicated to the study of the derivation
of \eqref{Isothermal Euler} from \eqref{eq:VPME WITH QUASINEUTRAL SCALING INTRO}.
The quasi-neutral limit has been studied in Han-Kwan \cite{han2011quasineutral}
by using a modulated energy approach inspired by Brenier's method \cite{brenier2000convergence}. 
As already explained, this method will also play an important role in the present work. 
It should be mentioned that this approach can be extended to cover other variants
of the Vlasov system \eqref{eq:VPME INTRO}, including in the
presence of magnetic fields. 
The 1-D quasi-neutral limit, along with the combined 1-D mean-field quasi-neutral limit,
has been analyzed by Han-Kwan-Iacobelli in \cite{han2014quasineutral,han2021}.
Higher dimensions have been studied by Griffin-Pickering-Iacobelli
in \cite{griffin2020singular}, 
for mollified empirical measures, with a mollification
constant depending on the parameter $\varepsilon$. 

Theorem \ref{1body main intro} is the 1-body semi-classical analogue of the main result in \cite{han2011quasineutral}, 
while Theorem \ref{3rd main result intro} is the N-body quantum analogue of the main result 
in \cite{han2011quasineutral}. 
The latter limit, being a singular limit, requires the use of the functional inequalities in \cite{duerinckx2020mean}, 
and thus part of the novelty of the argument in the proof is reflected in the observation that
these functional inequalities can be incorporated within the
framework of quantum massless dynamics.     The fact that we are working in the quantum regime introduces technical difficulties at several different levels: the well-posedness of the underlying dispersive dynamics (addressed in Theorem \ref{wellposed intro}), the construction of admissible initial data  (addressed  in Section \ref{well prepared sec}) and, most importantly, the calculation of the quantum analogues of the modulated energy (addressed through Theorems \ref{1body main intro} and \ref{3rd main result intro}).\\

The paper is organized as follows. In Section 2, we calculate the time derivative of the modulated energy 
$\mathcal{E}_{\varepsilon,\hbar}$ and, as a result, obtain a Gr\"onwall estimate for this quantity. 
In Section 3, we renormalize this argument by using the functional inequalities discovered 
in \cite{duerinckx2020mean}, thus obtaining a stability estimate for $\mathcal{E}_{\varepsilon,\hbar,N}$. 
In Section 4, the general initial data $\psi^{\mathrm{in}}_{\varepsilon,\hbar}$ and $R^{\mathrm{in}}_{\varepsilon,\hbar,N}$ 
are constructed for which $\mathcal{E}_{\varepsilon,\hbar}(0)\underset{\varepsilon+\hbar\rightarrow 0}{\rightarrow} 0$ 
and $\mathcal{E}_{\varepsilon,\hbar,N}(0)\underset{\varepsilon+\hbar+\frac{1}{N}\rightarrow 0}{\rightarrow} 0$, respectively.
Finally, in Section 5, we prove the well-posedness of system \eqref{eq:Hartree VPME}, based on \cite{griffin2021global}. 
As will be explained, the well-posedness of system \eqref{eq:von N intro} 
is a direct consequence of the Kato-Rellich theorem.

\section{The 1-Body Semi-Classical Limit}\label{The 1-Body Semi-Classical Limit}
This section concerns the derivation of the isothermal Euler equations:
\begin{equation}
\begin{cases}
\partial_{t}\rho+\mathrm{div}\,(\rho u)=0,\\[1mm]
\partial_{t}(\rho u)+\mathrm{div}\,(\rho u \otimes u)+\nabla \rho=0,
\end{cases}
\label{Isothermal Euler Sec 2}
\end{equation}
as a limit of \eqref{eq:Hartree VPME} when $\varepsilon+\hbar\rightarrow0$.
Observe that an alternative formulation of \eqref{Isothermal Euler Sec 2}
is 
\begin{equation}
\begin{cases}
\partial_{t}\log\rho+\mathrm{div}\, u+u\cdot\nabla\log\rho=0,\\[1mm]
\partial_{t}u+u\cdot\nabla_{x}u+\nabla\log\rho=0.
\end{cases}
\label{isothermal log}
\end{equation}
The well-posedness theory of the isothermal Euler equations is summarized in the following theorem,
which can be found in \cite{Majda}.

\begin{thm}
Let $(\rho_{0},u_{0})\in (H^{s}(\mathbb{T}^{d}))^2$
for some $s>\frac{d}{2}+1$, 
with $\rho_{0}(x)>0$ for $x\in\mathbb{T}^d$. Then there exists some $T_{\ast}>0$ such that there exists a unique classical 
solution $(\rho,u)\in C^{1}([0,T_{\ast});H^{s-1}(\mathbb{T}^{d}))\times\left(  C([0,T_{\ast});H^{s}(\mathbb{T}^{d}))\cap C^{1}([0,T_{\ast})\times \mathbb{T}^{d})\right)$ of the Cauchy problem for system \eqref{isothermal log} with the initial data{\rm :}
\begin{equation}\label{InitialData}
(\rho,u)|_{t=0}=(\rho_{0}(x),u_{0}(x)).
\end{equation}
Moreover, $\log \rho\in  C([0,T_{\ast});H^{s}(\mathbb{T}^{d}))\cap C^{1}([0,T_{\ast})\times \mathbb{T}^{d})$. \label{existence and uniqueness for isothermal Euler}
\end{thm}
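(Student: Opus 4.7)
The plan is to recast the system as a quasilinear symmetric hyperbolic system and then invoke the standard local well-posedness theory in Majda's book. Concretely, I would introduce the new unknown $w\coloneqq\log\rho$, in terms of which the system \eqref{isothermal log} reads
\begin{equation*}
\partial_{t}\begin{pmatrix}w\\ u\end{pmatrix}+\sum_{j=1}^{d} A_{j}(w,u)\,\partial_{x_{j}}\begin{pmatrix}w\\ u\end{pmatrix}=0,\qquad A_{j}(w,u)=\begin{pmatrix} u_{j} & e_{j}^{\mathsf{T}}\\ e_{j} & u_{j}\,I_{d}\end{pmatrix},
\end{equation*}
where $e_{j}$ denotes the $j$-th standard basis vector of $\mathbb{R}^{d}$. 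The matrices $A_{j}$ are manifestly symmetric and depend smoothly on $(w,u)$, so this is a constant-multiplicity symmetric hyperbolic system with smooth coefficients in a neighborhood of any state.

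The next step is to verify that the transformed initial datum $(w_{0},u_{0})=(\log\rho_{0},u_{0})$ lies in $H^{s}(\mathbb{T}^{d})^{d+1}$. Since $s>\frac{d}{2}+1$, Sobolev embedding gives $\rho_{0}\in C^{0}(\mathbb{T}^{d})$, and the strict positivity $\rho_{0}>0$ on the compact torus yields $\rho_{0}\geq\delta$ for some $\delta>0$. The function $\log$ is smooth on $[\delta,\Vert\rho_{0}\Vert_{L^{\infty}}]$, so the standard Moser composition estimate gives $\log\rho_{0}\in H^{s}(\mathbb{T}^{d})$ with bounds depending on $\delta$ and $\Vert\rho_{0}\Vert_{H^{s}}$.

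With these ingredients in place, I would apply the local existence and uniqueness theorem for quasilinear symmetric hyperbolic systems as stated, e.g., in Theorem~2.1 of Majda's book. This yields a time $T_{\ast}>0$ and a unique classical solution $(w,u)\in C([0,T_{\ast});H^{s}(\mathbb{T}^{d}))\cap C^{1}([0,T_{\ast});H^{s-1}(\mathbb{T}^{d}))$. Sobolev embedding $H^{s-1}\hookrightarrow C^{0}$ (valid for $s-1>\frac{d}{2}$) upgrades the time-space regularity to $u\in C^{1}([0,T_{\ast})\times\mathbb{T}^{d})$, and the same for $w=\log\rho$. In particular, $\Vert w(t)\Vert_{L^{\infty}}$ remains bounded on $[0,T_{\ast})$, so $\rho(t,x)=e^{w(t,x)}$ stays strictly positive.

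The last step is to transfer the regularity back from $w$ to $\rho=e^{w}$. Composition with the smooth function $\exp$ on the bounded range of $w$ again gives, via Moser-type estimates, $\rho\in C([0,T_{\ast});H^{s}(\mathbb{T}^{d}))$; together with the first equation of \eqref{Isothermal Euler Sec 2}, which reads $\partial_{t}\rho=-\mathrm{div}(\rho u)$, and the fact that $\rho u\in C([0,T_{\ast});H^{s})$ (as $H^{s}$ is an algebra for $s>d/2$), we obtain $\partial_{t}\rho\in C([0,T_{\ast});H^{s-1})$, which gives the stated $\rho\in C^{1}([0,T_{\ast});H^{s-1})$. The main technical point to verify carefully is the composition lemma ensuring that $\log$ and $\exp$ preserve $H^{s}$ regularity quantitatively on intervals bounded away from zero; everything else is an application of the off-the-shelf symmetric hyperbolic theory.
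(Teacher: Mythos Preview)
Your proposal is correct and follows exactly the approach the paper has in mind: the paper does not give its own proof but simply cites Majda's book (and \cite{han2011quasineutral} for the whole-space statement), which is precisely the quasilinear symmetric hyperbolic framework you outline via the change of variable $w=\log\rho$. There is nothing to add.
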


\begin{rem}
The well-posedness of the Cauchy problem \eqref{isothermal log}--\eqref{InitialData} is stated in {\rm \cite{han2011quasineutral}} on the whole space. However, the proof extends with no difficulties to the periodic settings considered here. See also {\rm \cite{Majda}}.
\end{rem}
We briefly review
the elliptic theory necessary for the study of the nonlinear elliptic
equation governing $V$, as developed in \cite{griffin2021global}.   First, we recap the existence and uniqueness theory and the Schauder-type estimates, as summarized in
the following:

\begin{lem}[\cite{griffin2021global}, Proposition 3.1]
\label{existence uniqueness for hat{U}}
Assume
that $d\in\{2,3\}$ and $h\in L^{\infty}(\mathbb{T}^{d})$. 
Then there exist both a unique $\tilde{V}\in H^{1}(\mathbb{T}^{d})$ with zero mean and $\hat{V}\in H^{1}(\mathbb{T}^{d})$ satisfying
\[
-\Delta\tilde{V}=
h-1, \quad -\Delta\hat{V}=1-e^{\hat{V}+\tilde{V}}.
\]
Furthermore, the following estimates hold{\rm :}
\begin{enumerate}
\item[\rm (i)] For all $\alpha \in (0,1)$,  
\begin{align*}
&\| \tilde{V}\|_{C^{1,\alpha}}\leq C_{\alpha,d}(1+\left\Vert h\right\Vert_{\infty}),\\
&\|\hat{V}\|_{C^{1,\alpha}}\leq C_{\alpha,d}\exp\big(C_{\alpha,d}(1+\| h\|_{\frac{d+2}{d}})\big),
\end{align*}
for some constant $C_{\alpha,d}>0.$
\smallskip
\item[\rm (ii)]  For all $\alpha \in (0,\frac{1}{5}]$ if $d=3$ and $\alpha\in(0,1)$ if $d=2$, 
\begin{align*}
\|\hat{V}\|_{C^{2,\alpha}}\leq C_{\alpha,d}\exp\exp\big(C_{\alpha,d}(1+\left\Vert h\right\Vert_{\frac{d+2}{d}})\big),
\end{align*}
for some constant $C_{\alpha,d}>0.$
\end{enumerate}
\end{lem}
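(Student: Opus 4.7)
The plan is to treat the two equations separately: the linear one for $\tilde V$ by standard Calderon–Zygmund/Sobolev bootstrap, and the nonlinear one for $\hat V$ by a variational argument exploiting strict convexity of an associated energy, followed by Moser-type iteration to get the quantitative estimates.

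For $\tilde V$, the linear equation $-\Delta\tilde V=h-1$ on $\mathbb{T}^d$ (with the compatibility $\int h=1$ built in from the context where $h$ is a probability density) admits a unique zero-mean $H^1$ solution by Lax–Milgram. Since $h-1\in L^\infty\subset L^p$ for every $p<\infty$, Calderon–Zygmund gives $\tilde V\in W^{2,p}$ with norm controlled by $1+\|h\|_\infty$, and Morrey's embedding yields $\|\tilde V\|_{C^{1,\alpha}}\lesssim 1+\|h\|_\infty$ for every $\alpha\in(0,1)$.

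For $\hat V$, I would recognize $-\Delta\hat V=1-e^{\hat V+\tilde V}$ as the Euler–Lagrange equation for
\[
J(V)\coloneqq\tfrac{1}{2}\int_{\mathbb{T}^d}|\nabla V|^2\,\dd x-\int_{\mathbb{T}^d}V\,\dd x+\int_{\mathbb{T}^d}e^{V+\tilde V}\,\dd x,
\]
which is strictly convex and continuous on $H^1(\mathbb{T}^d)$. To check coercivity I split $V=\bar V+V_0$ with $\bar V$ the mean and $V_0$ zero-mean, use Poincar\'e to control $\|V_0\|_{H^1}$ by $\|\nabla V\|_2$, and invoke Jensen together with $\int\tilde V=0$ to obtain $\int e^{V+\tilde V}\geq e^{\bar V}$. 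Hence $J(V)\gtrsim \|V_0\|_{H^1}^2-\bar V+e^{\bar V}$, which diverges as $\|V\|_{H^1}\to\infty$. The direct method produces a unique minimizer $\hat V\in H^1(\mathbb{T}^d)$, which by strict convexity is the unique weak solution.

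The quantitative Schauder-type bounds on $\hat V$ are the main obstacle, and this is where the exponent $(d+2)/d$ and the double-exponential constant in (ii) really come from. The strategy is a Moser iteration: test the equation against $|\hat V|^{q-2}\hat V$ (or against $e^{\lambda(\hat V+\tilde V)}$) and use the sign structure of $1-e^{\hat V+\tilde V}$ together with the Sobolev embedding $H^1\hookrightarrow L^{2d/(d-2)}$ (whose dual yields precisely the exponent $(d+2)/d$ after interpolation) to upgrade $\hat V$ from $H^1$ to $L^\infty$, with constant $\exp(C(1+\|h\|_{(d+2)/d}))$. Once $\hat V\in L^\infty$, the RHS $1-e^{\hat V+\tilde V}$ is bounded and Calderon–Zygmund plus Morrey yield the $C^{1,\alpha}$ bound of (i). For (ii) I would bootstrap once more: $\hat V+\tilde V\in C^{1,\alpha}$ makes the RHS $C^{0,\alpha}$, and classical Schauder theory gives $\hat V\in C^{2,\alpha}$; tracking the constants through this second round produces the outer exponential, and the restriction $\alpha\in(0,\tfrac15]$ when $d=3$ reflects the borderline Sobolev exponent at that dimension. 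The hard part throughout is keeping the exponential nonlinearity under quantitative control, since any naive bound on $e^{\hat V+\tilde V}$ requires an $L^\infty$ bound on $\hat V$, and obtaining that bound is exactly what forces the Moser iteration and the exponential dependence on $\|h\|_{(d+2)/d}$.
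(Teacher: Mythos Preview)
The paper does not prove this lemma; it is quoted verbatim from \cite{griffin2021global}, Proposition~3.1, and used as a black box throughout. There is therefore no in-paper proof to compare your attempt against.

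That said, your outline is essentially the standard route and matches the strategy of the cited reference: linear theory for $\tilde V$, a strictly convex energy functional for existence and uniqueness of $\hat V$, and Moser-type iteration to extract the $L^\infty$ bound on $\hat V$ with exponential dependence on $\|h\|_{(d+2)/d}$, followed by Schauder bootstrap. One point worth sharpening: in $d=2$ the Sobolev embedding $H^1\hookrightarrow L^{2d/(d-2)}$ degenerates, so the iteration has to be set up through the Trudinger--Moser inequality instead, which still produces the exponential constant but via a different mechanism than the one you describe. Also, the restriction $\alpha\in(0,\tfrac15]$ for $d=3$ is not exactly ``borderline Sobolev''; it arises because the $C^{0,\alpha}$ regularity of the right-hand side $1-e^{\hat V+\tilde V}$ is limited by the $C^{0,\alpha}$ regularity of $\tilde V$ coming from $W^{2,p}$ with $p$ dictated by the available integrability of $h$, and tracking that carefully gives the specific threshold $\tfrac15$. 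These are refinements rather than gaps.
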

\begin{lem}[\cite{griffin2023stability}, Proposition 3.2]
\label{existence uniqueness 1D}
Assume that $\rho \in \mathcal{P}(\mathbb{T})$ is a probability measure. Then there exist both a unique $\tilde{V}\in H^{1}(\mathbb{T})$ with zero mean and $\hat{V}\in H^{1}(\mathbb{T})$ satisfying
\begin{align*}
-\tilde{V}''=\rho-1, \quad -\hat{V}''=1-e^{\tilde{V}+\hat{V}}.
\end{align*}
Moreover, the following 
estimate holds{\rm :}
\begin{align*}
\| \hat{V}'\|_{\mathrm{Lip}}\leq 1.
\end{align*}
\end{lem}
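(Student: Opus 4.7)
My plan is to address the two equations in sequence. For $\tilde V$, I observe that $\rho - 1$ has vanishing mean because $\rho$ is a probability measure; inverting $-\partial_x^2$ on the zero-mean subspace of $L^2(\mathbb{T})$ via Fourier series (the solution coefficients decay like $1/k^2$) produces the unique $\tilde V\in H^1(\mathbb{T})$ with zero mean. This part is essentially routine.

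For $\hat V$, I would first construct the composite potential $V:=\tilde V+\hat V$, which should satisfy the single equation $-V''+e^V=\rho$, and then recover $\hat V:=V-\tilde V$. To construct $V$, the plan is to minimize the strictly convex functional
\begin{equation*}
J[V]:=\tfrac12\int_{\mathbb{T}}|V'|^2\,\dd x + \int_{\mathbb{T}} e^V\,\dd x - \int_{\mathbb{T}} V\,\dd\rho
\end{equation*}
over $H^1(\mathbb{T})$. The one-dimensional embedding $H^1(\mathbb{T})\hookrightarrow C^0(\mathbb{T})$ makes the last term (paired against a measure) well-defined and $V$ continuous. Writing $V=\bar V+V_0$ with $V_0$ of zero mean and using Jensen's inequality $\int e^V\ge e^{\bar V}$, one obtains
\[
J[V]\;\gtrsim\;\tfrac12\|V_0'\|_{L^2}^2 - C\|V_0\|_{H^1} + (e^{\bar V}-\bar V)\;\longrightarrow\;+\infty
\]
as $\|V_0\|_{H^1}+|\bar V|\to\infty$, so the direct method delivers a unique minimizer satisfying $-V''+e^V=\rho$ distributionally. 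Setting $\hat V:=V-\tilde V$ then gives the pointwise identity $\hat V''=V''-\tilde V''=e^V-1$, which, since $V\in C^0(\mathbb{T})$, upgrades $\hat V$ to $C^2(\mathbb{T})$.

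For the Lipschitz bound, the key move is that integrating $\hat V''=e^V-1$ over $\mathbb{T}$ and invoking periodicity forces $\int_{\mathbb{T}} e^V\,\dd x=1$ and $\int_{\mathbb{T}}\hat V'\,\dd x=0$. Picking $x_\pm$ at which the continuous function $\hat V'$ attains its extrema, the fundamental theorem of calculus along either of the two arcs connecting $x_-$ to $x_+$ gives
\begin{equation*}
\hat V'(x_+)-\hat V'(x_-)=\int_{\mathrm{arc}}(e^V-1)\,\dd s = I-L,
\end{equation*}
where $I:=\int_{\mathrm{arc}} e^V\in[0,1]$ and $L$ denotes the arc length in $[0,1]$; hence $|I-L|\le\max(I,L)\le 1$. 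Combined with $\int_{\mathbb{T}}\hat V'=0$, which forces $\min\hat V'\le 0\le\max\hat V'$, this yields $\|\hat V'\|_{L^\infty}\le 1$, the desired Lipschitz estimate on $\hat V$. The main obstacle throughout is that $\rho$ is only a probability measure rather than a bounded density, which blocks direct application of the Schauder estimates of Lemma~\ref{existence uniqueness for hat{U}}; the one-dimensional Sobolev embedding $H^1\hookrightarrow C^0$ is what rescues the variational construction, and the periodicity identity $\int e^V=1$ combined with the elementary arclength comparison delivers the sharp constant $1$.
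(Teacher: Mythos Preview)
The paper does not supply its own proof of this lemma; it is quoted verbatim from \cite{griffin2023stability}, Proposition~3.2, and used as a black box. So there is no in-paper argument to compare against, and your task was effectively to reconstruct a proof from scratch.

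Your argument is sound. The variational construction of $V=\tilde V+\hat V$ via the strictly convex, coercive functional $J[V]=\tfrac12\int|V'|^2+\int e^V-\int V\,\dd\rho$ is the standard route for this type of Poisson--Boltzmann problem (it is also the approach taken in \cite{griffin2021global} for the higher-dimensional bounded-density case, Lemma~\ref{existence uniqueness for hat{U}}); the one-dimensional embedding $H^1(\mathbb T)\hookrightarrow C^0(\mathbb T)$ is precisely what allows $\rho$ to be an arbitrary probability measure here, as you note. The coercivity and lower-semicontinuity checks are correct, and recovering $\hat V:=V-\tilde V\in H^1$ with the pointwise identity $\hat V''=e^V-1$ is clean.

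Your arc-integration argument for the bound is correct and yields $\|\hat V'\|_{L^\infty}\le 1$, i.e.\ $\hat V$ is $1$-Lipschitz. One remark on notation: read literally, $\|\hat V'\|_{\mathrm{Lip}}$ would denote the Lipschitz seminorm of $\hat V'$, i.e.\ $\|\hat V''\|_{L^\infty}$, which is a different (and in general false, since $e^V$ need not be $\le 2$) statement. However, the way this lemma is invoked later in the paper---notably in Lemma~\ref{Lip in configuration}(ii), to conclude that $x\mapsto V_{X_N}(x)$ is Lipschitz---makes clear that what is actually needed and intended is exactly your conclusion $\|\hat V'\|_{L^\infty}\le 1$. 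So your interpretation matches the paper's usage.
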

\begin{rem}
It can be checked directly that, subject to the assumptions 
of {\rm Lemma \ref{existence uniqueness for hat{U}}} or {\rm \ref{existence uniqueness 1D}}, 
the function $e^{\tilde{V}+\hat{V}}$ is a probability density, \it{i.e.,} $\int_{\mathbb{T}^{d}}e^{\tilde{V}(x)+\hat{V}(x)}\,\dd x=1$.  \end{rem}

\begin{rem}
According to {\rm Lemma 2.1 in \cite{bouchut1991global}}, given $h\in L^{\infty}(\mathbb{T}^{d})$, 
there exists a unique solution $V\in H^1(\mathbb{T})$ to the problem{\rm :} $-\Delta V=h-e^{V}$. 
Therefore, if $h$ and $(\tilde{V},\hat{V})$ are as in {\rm Lemma \ref{existence uniqueness for hat{U}}}, we can represent the unique solution $V$ as $V=\tilde{V}+\hat{V}$. The same remark applies when $d=1$ 
and $\rho \in \mathcal{P}(\mathbb{T})$ is a probability measure. 
Hereafter, we denote $V\coloneqq \tilde{V}+\hat{V}$, where $\tilde{V}$ and $\hat{V}$ are the solutions 
guaranteed by {\rm Lemmas \ref{existence uniqueness for hat{U}}} and {\rm \ref{existence uniqueness 1D}}, respectively.
\end{rem}
\begin{rem}
If we further assume that $h\in C^{\infty}(\mathbb{T}^{d})$ in {\rm Lemma \ref{existence uniqueness for hat{U}}},
by the Schauder estimates and by a standard iteration argument, 
then we see that 
in fact 
$V\in C^{\infty}(\mathbb{T}^{d})$ with the estimate{\rm :}
\begin{align*}
\left\Vert V\right\Vert_{C^{k,\alpha}}\leq C
\end{align*}
for some $C=C(k,\alpha,d,\left\Vert h\right\Vert_{\infty})$. \label{smoothness of V}
\end{rem}
Recall that the modulated energy is given by
\begin{align*}
&\mathcal{\mathcal{E}}_{\varepsilon,\hbar}(t)\coloneqq\underbrace{\frac{1}{2}\sum^d_{j=1}\mathrm{tr}\big((i\hbar\partial_{x_{j}}+u_{j})^{2}R_{\varepsilon,\hbar}(t)\big)}_{\coloneqq \mathcal{K}_{\varepsilon,\hbar}(t)}+\underbrace{\frac{\varepsilon}{2}\int_{\mathbb{T}^{d}}\left|\nabla V_{\varepsilon,\hbar}(t,x)\right|^{2}\,\dd x. }_{\coloneqq \mathcal{V}_{\varepsilon,\hbar}(t)}\\
&\quad\quad\quad\quad+\int_{\mathbb{T}^{d}}\Big(m_{\varepsilon,\hbar}(t,x)\log\big(\frac{m_{\varepsilon,\hbar}(t,x)}{\rho(t,x)}\big)-m_{\varepsilon,\hbar}(t,x)+\rho(t,x)\Big) \,\dd x\\
\end{align*}
and the total energy is given by
\begin{align}
\mathcal{F}_{\varepsilon,\hbar}(t)\coloneqq\frac{1}{2}\mathrm{tr}\left(-\hbar^{2}\Delta R_{\varepsilon,\hbar}(t)\right)+\frac{\varepsilon}{2}\int_{\mathbb{T}^{d}}\left|\nabla V_{\varepsilon,\hbar}(t,x)\right|^{2}\,\dd x+\int_{\mathbb{T}^{d}}V_{\varepsilon,\hbar}(t,x)m_{\varepsilon,\hbar}(t,x)\,\dd x.\label{total}
\end{align}

\begin{rem}\label{rem2.8}
It is classical that, given probability densities $\mu,\nu \in \mathcal{P}(\mathbb{T}^{d})\cap L^{1}(\mathbb{T}^d)$, the associated relative entropy is non-negative{\rm :} $\int_{\mathbb{T}^{d}}\mu\log \left(\frac{\mu}{\nu}\right)\,\dd x\geq 0$. 
This explains why the last terms in the definitions of $\mathcal{E}_{\varepsilon,\hbar}(t)$ 
and $\mathcal{F}_{\varepsilon,\hbar}(t)$ are non-negative and, as a result, $\mathcal{E}_{\varepsilon,\hbar}(t)\geq0$.
\end{rem}
\begin{rem}\label{rem2.9}
Note that
\begin{align*}
\mathcal{K}_{\varepsilon,\hbar}(t)=\frac{1}{2}\int_{\mathbb{T}^{d}}\left\vert (i\hbar\nabla+u)\psi_{\varepsilon,\hbar}\right\vert^{2}(t,x)\,\dd x
\end{align*}
and that $R_{\varepsilon,\hbar}(t)$ is determined by the Cauchy problem for 
the 
Hartree-type system{\rm :}
\begin{align*}
\begin{cases}
i\hbar\partial_{t}R_{\varepsilon,\hbar}(t)=[H_{\varepsilon,\hbar},R_{\varepsilon,\hbar}(t)],\\[0.5mm]
-\varepsilon \Delta V_{\varepsilon,\hbar}=\left\vert \psi_{\varepsilon,\hbar} \right\vert^{2}-e^{V_{\varepsilon,\hbar}},\\[0.5mm]
\psi_{\varepsilon,\hbar}|_{t=0}=\psi_{\varepsilon,\hbar}^{\mathrm{in}}.
\end{cases}
\end{align*}
It is instructive to invoke the Hartree formulation and the trace, since it clarifies the underlying algebraic structure.
\end{rem}
Before proving the stability estimate for $\mathcal{\mathcal{E}}_{\varepsilon,\hbar}(t)$ 
as stated in Theorem \ref{1body main intro}, we observe that the density is governed by an evolution equation 
and the conservation of the total energy $\mathcal{F}_{\varepsilon,\hbar}(t)$. 
In the forthcoming calculation, we frequently use the anticommutator which is denoted $\vee$ 
and defined by $A\vee B\coloneqq AB + BA$ for any given operators $A,B$.

\begin{lem}\label{density evolution}
Let $R_{\varepsilon,\hbar}(t)$ and $\rho_{\varepsilon,\hbar}(t,\cdot)$ be as in {\rm Theorem \ref{1body main intro}}. Then
\begin{equation}\label{equation for density}
\partial_{t}\rho_{\varepsilon,\hbar}+\mathrm{div}\,J_{\varepsilon,\hbar}=0.
\end{equation}
\end{lem}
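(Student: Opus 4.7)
The plan is to derive this as the standard quantum continuity equation, working directly from the Schr\"odinger equation in \eqref{well posedness of hartree vpme}. Since $R_{\varepsilon,\hbar}(t)=|\psi_{\varepsilon,\hbar}(t)\rangle\langle\psi_{\varepsilon,\hbar}(t)|$ is a rank-one projection, its density (the diagonal of the integral kernel) is simply $\rho_{\varepsilon,\hbar}(t,x)=|\psi_{\varepsilon,\hbar}(t,x)|^{2}$. The regularity $\psi_{\varepsilon,\hbar}\in\mathrm{Lip}([0,T];H^{2}(\mathbb{T}^{d}))$ from Theorem \ref{wellposed intro} justifies differentiating in $t$ and integrating by parts pointwise almost everywhere.

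First, I would record the preliminary observation that $V_{\varepsilon,\hbar}$ is real-valued: the Poisson--Boltzmann equation $-\varepsilon\Delta V=|\psi|^{2}-e^{V}$ has a real right-hand side when $V$ is real, and the uniqueness statement recalled in Remark 2.5 (from \cite{bouchut1991global}) forces $V_{\varepsilon,\hbar}$ to be real. This is essential because otherwise the potential terms would contribute an imaginary part to the density evolution.

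Next, I would compute
\begin{align*}
\partial_{t}\rho_{\varepsilon,\hbar}=\partial_{t}(\bar{\psi}_{\varepsilon,\hbar}\psi_{\varepsilon,\hbar})=\bar{\psi}_{\varepsilon,\hbar}\partial_{t}\psi_{\varepsilon,\hbar}+\psi_{\varepsilon,\hbar}\partial_{t}\bar{\psi}_{\varepsilon,\hbar}.
\end{align*}
Substituting $\partial_{t}\psi_{\varepsilon,\hbar}=\frac{1}{i\hbar}(-\frac{\hbar^{2}}{2}\Delta\psi_{\varepsilon,\hbar}+V_{\varepsilon,\hbar}\psi_{\varepsilon,\hbar})$ and its complex conjugate, the terms involving $V_{\varepsilon,\hbar}$ cancel (using the reality of $V_{\varepsilon,\hbar}$), leaving
\begin{align*}
\partial_{t}\rho_{\varepsilon,\hbar}=\frac{i\hbar}{2}\big(\bar{\psi}_{\varepsilon,\hbar}\Delta\psi_{\varepsilon,\hbar}-\psi_{\varepsilon,\hbar}\Delta\bar{\psi}_{\varepsilon,\hbar}\big)=\frac{i\hbar}{2}\,\mathrm{div}\,\big(\bar{\psi}_{\varepsilon,\hbar}\nabla\psi_{\varepsilon,\hbar}-\psi_{\varepsilon,\hbar}\nabla\bar{\psi}_{\varepsilon,\hbar}\big).
\end{align*}
Since $\bar{\psi}_{\varepsilon,\hbar}\nabla\psi_{\varepsilon,\hbar}-\psi_{\varepsilon,\hbar}\nabla\bar{\psi}_{\varepsilon,\hbar}=2i\,\mathrm{Im}(\bar{\psi}_{\varepsilon,\hbar}\nabla\psi_{\varepsilon,\hbar})$, I recognize $J_{\varepsilon,\hbar}=\hbar\,\mathrm{Im}(\bar{\psi}_{\varepsilon,\hbar}\nabla\psi_{\varepsilon,\hbar})$ and obtain \eqref{equation for density}.

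There is no substantial obstacle here; the only point requiring care is verifying that $V_{\varepsilon,\hbar}$ is real, which follows from the uniqueness theory for the Poisson--Boltzmann equation invoked earlier. Alternatively, one could present the argument in the operator formulation using $i\hbar\partial_{t}R_{\varepsilon,\hbar}=[H_{\varepsilon,\hbar},R_{\varepsilon,\hbar}]$, extracting the diagonal of the kernel and using that the commutator $[V_{\varepsilon,\hbar},R_{\varepsilon,\hbar}]$ contributes zero on the diagonal while $[-\frac{\hbar^{2}}{2}\Delta,R_{\varepsilon,\hbar}]$ produces the divergence of the current; this is purely cosmetic and yields the same identity.
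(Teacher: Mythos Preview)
Your proof is correct. The paper, however, takes the operator-theoretic route you mention only in passing at the end: it tests the density against an arbitrary $a\in C(\mathbb{T}^{d})$, writes $\int a\rho_{\varepsilon,\hbar}\,\dd x=\mathrm{tr}(aR_{\varepsilon,\hbar})$, differentiates using the Hartree equation $i\hbar\partial_{t}R_{\varepsilon,\hbar}=[H_{\varepsilon,\hbar},R_{\varepsilon,\hbar}]$, and computes the commutator $[-\frac{\hbar^{2}}{2}\Delta,a]=\frac{1}{2}\sum_{k}(-i\hbar\partial_{x_{k}})\vee(-i\hbar\partial_{x_{k}}a)$ to recover $\int\nabla a\cdot J_{\varepsilon,\hbar}\,\dd x$. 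Your direct wave-function computation is more elementary and tailored to the pure-state case $R_{\varepsilon,\hbar}=|\psi\rangle\langle\psi|$; the paper's trace formulation generalizes immediately to mixed states and, more to the point here, rehearses the anticommutator and trace manipulations that drive the subsequent modulated-energy calculation (cf.\ Remark~\ref{rem2.9}). Both arguments rely on the reality of $V_{\varepsilon,\hbar}$, though the paper leaves this implicit in the vanishing of $\mathrm{tr}([V_{\varepsilon,\hbar},a]R_{\varepsilon,\hbar})$.
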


\begin{proof} For any $a\in C(\mathbb{T}^{d};\mathbb{T}^{d}),$ we compute
\begin{align*}
\frac{\dd}{\dd t}\int_{\mathbb{T}^{d}}a(x)\rho_{\varepsilon,\hbar}(t,x)\,\dd x
&=\frac{\dd}{\dd t}\mathrm{tr}\big(aR_{\varepsilon,\hbar}(t)\big)
=\frac{1}{i\hbar}\mathrm{tr}\big(a [-\frac{\hbar^{2}}{2}\Delta+V_{\varepsilon,\hbar},R_{\varepsilon,\hbar}(t)]\big)\\
&=\frac{i}{\hbar}\mathrm{tr}\big([-\frac{\hbar^{2}}{2}\Delta,a]R_{\varepsilon,\hbar}(t)\big)
=\frac{1}{2}\underset{k}{\sum}\mathrm{tr}\big([-i\hbar\partial_{x_{k}}\vee\partial_{x_{k}},a]R_{\varepsilon,\hbar}(t)\big)\\
&=\frac{1}{2}\underset{k}{\sum}\mathrm{tr}\big((-i\hbar\partial_{x_{k}})\vee(\partial_{x_{k}}a)\,R_{\varepsilon,\hbar}(t)\big)=\int_{\mathbb{T}^{d}}\nabla a(x)\cdot J_{\varepsilon,\hbar}(t,x)\,\dd x,
\end{align*}
where we have used the equations in {\rm Remark \ref{rem2.9}}.
\end{proof}

\begin{lem}
\label{Conservation of energy} Let  $R_{\varepsilon,\hbar}(t)$ be as in {\rm Theorem
\ref{1body main intro}}. Then $\frac{\dd}{\dd t}\mathcal{F}_{\varepsilon,\hbar}(t)=0.$
\end{lem}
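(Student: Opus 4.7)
The plan is to differentiate each of the three pieces of $\mathcal{F}_{\varepsilon,\hbar}$ -- the kinetic piece $\mathcal{K}_{\varepsilon,\hbar}=\frac12\mathrm{tr}(-\hbar^{2}\Delta R_{\varepsilon,\hbar})$, the electric piece $\mathcal{V}_{\varepsilon,\hbar}=\frac{\varepsilon}{2}\int|\nabla V_{\varepsilon,\hbar}|^{2}\,\dd x$, and the coupling piece $\int V_{\varepsilon,\hbar}m_{\varepsilon,\hbar}\,\dd x$ -- and to show that the three time-derivatives cancel pairwise. The two hidden ingredients that make the cancellation work are, on the one hand, the von Neumann identity $\partial_tR_{\varepsilon,\hbar}=\frac{1}{i\hbar}[H_{\varepsilon,\hbar},R_{\varepsilon,\hbar}]$ recalled in Remark \ref{rem2.9}, and, on the other hand, the mass identity
\[
\int_{\mathbb T^d}m_{\varepsilon,\hbar}\,\dd x=\int_{\mathbb T^d}\rho_{\varepsilon,\hbar}\,\dd x=1,
\]
which is obtained by integrating the Poisson--Boltzmann equation over $\mathbb T^d$ and using the periodic boundary conditions together with the $L^{2}$-normalization of $\psi_{\varepsilon,\hbar}$. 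Differentiating this in time and using $\partial_t m_{\varepsilon,\hbar}=m_{\varepsilon,\hbar}\partial_t V_{\varepsilon,\hbar}$ yields $\int_{\mathbb T^d} m_{\varepsilon,\hbar}\,\partial_t V_{\varepsilon,\hbar}\,\dd x=0$, a fact that will be decisive below.

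For $\mathcal K_{\varepsilon,\hbar}$, I would substitute the von Neumann equation into $\frac{\dd}{\dd t}\mathcal K_{\varepsilon,\hbar}=\frac{1}{2}\mathrm{tr}(-\hbar^{2}\Delta\,\partial_t R_{\varepsilon,\hbar})$ and exploit the cyclicity identity $\mathrm{tr}(A[B,C])=\mathrm{tr}([A,B]C)$ to reduce the computation to $\frac{1}{i\hbar}\mathrm{tr}\big([-\frac{\hbar^{2}}{2}\Delta,V_{\varepsilon,\hbar}]R_{\varepsilon,\hbar}\big)$, since the kinetic part of $H_{\varepsilon,\hbar}$ commutes with itself. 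The commutator $[-\frac{\hbar^{2}}{2}\Delta,V_{\varepsilon,\hbar}]$ then expands exactly as in the computation carried out in the proof of Lemma \ref{density evolution} with the test function $a$ replaced by $V_{\varepsilon,\hbar}(t,\cdot)$, producing
\[
\frac{\dd}{\dd t}\mathcal K_{\varepsilon,\hbar}=-\int_{\mathbb T^d}\nabla V_{\varepsilon,\hbar}\cdot J_{\varepsilon,\hbar}\,\dd x
=-\int_{\mathbb T^d}V_{\varepsilon,\hbar}\,\partial_t\rho_{\varepsilon,\hbar}\,\dd x,
\]
where the last equality combines an integration by parts with the continuity equation \eqref{equation for density}.

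For the remaining two pieces, a direct differentiation gives $\frac{\dd}{\dd t}\mathcal V_{\varepsilon,\hbar}=\varepsilon\int\nabla V_{\varepsilon,\hbar}\cdot\nabla\partial_t V_{\varepsilon,\hbar}\,\dd x$ and, using $\partial_t m_{\varepsilon,\hbar}=m_{\varepsilon,\hbar}\partial_t V_{\varepsilon,\hbar}$ together with the mass identity $\int m_{\varepsilon,\hbar}\partial_t V_{\varepsilon,\hbar}\,\dd x=0$,
\[
\frac{\dd}{\dd t}\int_{\mathbb T^d}V_{\varepsilon,\hbar}m_{\varepsilon,\hbar}\,\dd x
=\int_{\mathbb T^d}V_{\varepsilon,\hbar}m_{\varepsilon,\hbar}\,\partial_t V_{\varepsilon,\hbar}\,\dd x.
\]
To splice these to the kinetic contribution, I would test the time-differentiated Poisson--Boltzmann equation $-\varepsilon\Delta\partial_t V_{\varepsilon,\hbar}+m_{\varepsilon,\hbar}\partial_t V_{\varepsilon,\hbar}=\partial_t\rho_{\varepsilon,\hbar}$ against $V_{\varepsilon,\hbar}$ and integrate by parts, obtaining exactly
\[
\int_{\mathbb T^d}V_{\varepsilon,\hbar}\,\partial_t\rho_{\varepsilon,\hbar}\,\dd x=\frac{\dd}{\dd t}\mathcal V_{\varepsilon,\hbar}+\frac{\dd}{\dd t}\!\int_{\mathbb T^d}V_{\varepsilon,\hbar}m_{\varepsilon,\hbar}\,\dd x.
\]
Adding this to the kinetic contribution yields $\frac{\dd}{\dd t}\mathcal F_{\varepsilon,\hbar}=0$. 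No serious obstacle is expected: the regularity afforded by $\psi_{\varepsilon,\hbar}\in\mathrm{Lip}([0,T];H^{2}(\mathbb T^d))$ from Theorem \ref{wellposed intro} together with the smoothness of $V_{\varepsilon,\hbar}$ guaranteed by Remark \ref{smoothness of V} amply justify all of the integrations by parts and trace manipulations. The genuinely new feature compared with the linear Poisson case \eqref{SP eq} is the presence of the extra coupling term $\int V_{\varepsilon,\hbar}m_{\varepsilon,\hbar}\,\dd x$ in the energy; the conceptual heart of the argument is that this is precisely the term required to absorb $\int V_{\varepsilon,\hbar}m_{\varepsilon,\hbar}\partial_t V_{\varepsilon,\hbar}\,\dd x$ through the time-differentiated Poisson--Boltzmann identity, with the auxiliary fact $\int m_{\varepsilon,\hbar}\partial_t V_{\varepsilon,\hbar}\,\dd x=0$ eliminating the single otherwise-leftover term.
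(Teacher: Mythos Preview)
Your proposal is correct and follows essentially the same route as the paper: both compute $\frac{\dd}{\dd t}\mathcal K_{\varepsilon,\hbar}=-\int\nabla V_{\varepsilon,\hbar}\cdot J_{\varepsilon,\hbar}\,\dd x=-\int V_{\varepsilon,\hbar}\,\partial_t\rho_{\varepsilon,\hbar}\,\dd x$ via the von Neumann equation and Lemma~\ref{density evolution}, then use the time-differentiated Poisson--Boltzmann equation together with the mass identity $\int m_{\varepsilon,\hbar}\,\partial_tV_{\varepsilon,\hbar}\,\dd x=0$ to match this against the derivatives of the remaining two pieces. The only cosmetic difference is that the paper packages the last step through the primitive $G(V)=Ve^{V}-e^{V}$, whereas you differentiate $\int V_{\varepsilon,\hbar}m_{\varepsilon,\hbar}\,\dd x$ directly; the algebra is identical.
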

\begin{proof}
We first compute
\begin{align*}
 \frac{\dd}{\dd t}\mathrm{tr}\left(-\hbar^{2}\Delta R_{\varepsilon,\hbar}(t)\right)
 &=\frac{1}{i\hbar}\mathrm{tr}\big(-\hbar^{2}\Delta[-\frac{1}{2}\hbar^{2}\Delta+V_{\varepsilon,\hbar},R_{\varepsilon,\hbar}(t)]\big)\\
&=\frac{i}{\hbar}\mathrm{tr}\big([-\frac{1}{2}\hbar^{2}\Delta+V_{\varepsilon,\hbar},-\hbar^{2}\Delta]R_{\varepsilon,\hbar}(t)\big)=\frac{i}{\hbar}\mathrm{tr}\big([V_{\varepsilon,\hbar},-\hbar^{2}\Delta]R_{\varepsilon,\hbar}(t)\big),
\end{align*}
where the equations in {\rm Remark \ref{rem2.9}} have been used.

The commutator can be computed as
\begin{align*}
\left[V_{\varepsilon,\hbar},-\hbar^{2}\Delta\right]=\underset{k}{\sum}(-i\hbar\partial_{x_{k}})\vee\left[V_{\varepsilon,\hbar},-i\hbar\partial_{x_{k}}\right]=\underset{k}{\sum}(-i\hbar\partial_{x_{k}})\vee(i\hbar\partial_{x_{k}}V_{\varepsilon,\hbar}),
\end{align*}
so that
\begin{align}\label{eq:-20}
\frac{1}{2}\frac{\dd}{\dd t}\mathrm{tr}\left(-\hbar^{2}\Delta R_{\varepsilon,\hbar}(t)\right)&=\frac{i}{2\hbar}\underset{k}{\sum}\mathrm{tr}\big((-i\hbar\partial_{x_{k}})\vee(i\hbar\partial_{x_{k}}V_{\varepsilon,\hbar})R_{\varepsilon,\hbar}(t)\big)\notag\\
&=\frac{1}{2}\underset{k}{\sum}\mathrm{tr}\big((-i\hbar\partial_{x_{k}})\vee(-\partial_{x_{k}}V_{\varepsilon,\hbar})R_{\varepsilon,\hbar}(t)\big)\notag\\
&=-\int_{\mathbb{T}^{d}}(\nabla V_{\varepsilon,\hbar}\cdot J_{\varepsilon,\hbar})(t,x) \,\dd x.
\end{align}

Define 
$$
G(V_{\varepsilon,\hbar})=V_{\varepsilon,\hbar}e^{V_{\varepsilon,\hbar}}-e^{V_{\varepsilon,\hbar}}.
$$

Then it follows from \eqref{equation for density} that
\begin{align}\label{m}
-\int_{\mathbb{T}^d} \nabla V_{\varepsilon,\hbar} \cdot J_{\varepsilon,h}\,\dd x&=\int_{\mathbb{T}^d} \mathrm{div}\, J_{\varepsilon,h}\cdot V_{\varepsilon,\hbar}\,\dd x=-\int_{\mathbb{T}^d} \partial_t\rho^{\varepsilon}\cdot V_{\varepsilon,\hbar}\,\dd x\nonumber\\
&=\varepsilon\int_{\mathbb{T}^d}\Delta\partial_t V_{\varepsilon,\hbar}\cdot V_{\varepsilon,\hbar}\,\dd x-\int_{\mathbb{T}^d}e^{V_{\varepsilon,\hbar}}\partial_t V_{\varepsilon,\hbar}\cdot V_{\varepsilon,\hbar}\,\dd x\nonumber\\
&=-\frac{\varepsilon}{2}\frac{\dd}{\dd t}\int_{\mathbb{T}^d}|\nabla V_{\varepsilon,\hbar}|^2\,\dd x-\int_{\mathbb{T}^d}G'(V_{\varepsilon,\hbar})\partial_t V_{\varepsilon,\hbar}\,\dd x\nonumber\\
&=-\frac{\varepsilon}{2}\frac{\dd}{\dd t}\int_{\mathbb{T}^d}|\nabla V_{\varepsilon,\hbar}|^2\,\dd x-\frac{\dd}{\dd t}\int_{\mathbb{T}^d}G(V_{\varepsilon,\hbar}) \,\dd x.
\end{align}

Notice that
$$
\frac{\dd}{\dd t}\int_{\mathbb{T}^{d}}e^{V_{\varepsilon,\hbar}(t,x)}\,\dd x=0.
$$

Then, combining \eqref{eq:-20} with \eqref{m}, we conclude the proof.
\end{proof}

\medskip
\textit{Proof of Theorem \ref{1body main intro}}.
The proof is divided into five steps.

\smallskip
\textbf{1}.
Note that
\begin{align*}
\mathcal{\mathcal{E}}_{\varepsilon,\hbar}(t)&=\mathcal{F}_{\varepsilon,\hbar}(t)+\frac{1}{2}\underset{j}{\sum}\mathrm{tr}\left((i\hbar\partial_{x_{j}})\vee u^{j}R_{\varepsilon,\hbar}(t)\right)+\frac{1}{2}\int_{\mathbb{T}^{d}}\left(\rho_{\varepsilon,\hbar}|u\right|^{2})(t,x)\,\dd x\nonumber\\
&\quad\,\,+\int_{\mathbb{T}^{d}} \Big(\big(m_{\varepsilon,\hbar}\log\left(1/\rho\right)\big)(t,x)+\rho(t,x)\Big)\,\dd x.
\end{align*}
Therefore, by the conservation of energy (Lemma \ref{Conservation of energy}),
we have
\begin{align*}
\frac{\dd}{\dd t}\mathcal{\mathcal{E}}_{\varepsilon,\hbar}(t)& =\frac{1}{2}\frac{\dd}{\dd t}\underset{j}{\sum}\mathrm{tr}\left((i\hbar\partial_{x_{j}})\vee u^{j}R_{\varepsilon,\hbar}(t)\right)+\frac{1}{2}\frac{\dd}{\dd t}\int_{\mathbb{T}^{d}}(\rho_{\varepsilon,\hbar}\left|u\right|^{2})(t,x)\,\dd x\\
&\quad\,\,+\int_{\mathbb{T}^{d}} \partial_{t}\big(m_{\varepsilon,\hbar}\log(1/\rho)\big)(t,x)\,\dd x+\int_{\mathbb{T}^{d}}\partial_{t}\rho(t,x) \,\dd x:=\sum^4_{j=1}I^{j}(t).
\end{align*}

\textbf{2}. We first claim that the following identity holds:
\begin{align}
I^{1}(t)+I^{2}(t)=&\, \frac{1}{2}\underset{j}{\sum}\mathrm{tr}\big((i\hbar\partial_{x_{j}}+u^{j})\vee(\partial_{t}u^{j}+(u\cdot\nabla u)^{j})R_{\varepsilon,\hbar}(t)\big) \notag\\
&-\frac{1}{4}\underset{j,k}{\sum}\mathrm{tr}\big((i\hbar\partial_{x_{j}}+u^{j})\vee((i\hbar\partial_{x_{k}}+u^{k})\vee(\partial_{x_{k}}u^{j}))R_{\varepsilon,\hbar}(t)\big) \notag\\
&+\int_{\mathbb{T}^{d}}\rho_{\varepsilon,\hbar}(t,x)\,\nabla V_{\varepsilon,\hbar}(t,x)\cdot u(t,x) \,\dd x.\label{eq:-2}
\end{align}
We compute:
\begin{align*}
&\frac{\dd}{\dd t}\sum_j\mathrm{tr}\big(i\hbar\partial_{x_{j}}\vee u^{j}R_{\varepsilon,\hbar}(t)\big)+\frac{\dd}{\dd t}\mathrm{tr}\big(|u|^{2}R_{\varepsilon,\hbar}(t)\big)\\
&=\mathrm{tr}\big((\underset{j}{\sum}i\hbar\partial_{x_{j}}\vee u^{j}+|u|^{2})\partial_{t}R_{\varepsilon,\hbar}(t)\big)+\mathrm{tr}\big(\partial_{t}(\underset{j}{\sum}i\hbar\partial_{x_{j}}\vee u^{j}+|u|^{2})R_{\varepsilon,\hbar}(t)\big)\\
&=\frac{1}{i\hbar}\mathrm{tr}\big((\underset{j}{\sum}i\hbar\partial_{x_{j}}\vee u^{j}+|u|^{2})[H_{\varepsilon,\hbar}(t),R_{\varepsilon,\hbar}(t)]\big)
+\mathrm{tr}\big(\partial_{t}(\underset{j}{\sum}i\hbar\partial_{x_{j}}\vee u^{j}+|u|^{2})R_{\varepsilon,\hbar}(t)\big)\\
&=\frac{i}{\hbar}\mathrm{tr}\big([H_{\varepsilon,\hbar}(t),\underset{j}{\sum}i\hbar\partial_{x_{j}}\vee u^{j}+|u|^{2}]R_{\varepsilon,\hbar}(t)\big)+\mathrm{tr}\big(\partial_{t}(\underset{j}{\sum}i\hbar\partial_{x_{j}}\vee u^{j}+|u|^{2})R_{\varepsilon,\hbar}(t)\big)\\
&=\mathrm{tr}\big((\partial_{t}+\frac{i}{\hbar}[H_{\varepsilon,\hbar}(t),\cdot\,])(\underset{j}{\sum}i\hbar\partial_{x_{j}}\vee u^{j}+|u|^{2})R_{\varepsilon,\hbar}(t)\big)\\
&=\sum_j\mathrm{tr}\big((\partial_{t}+\frac{i}{\hbar}[-\frac{\hbar^{2}}{2}\Delta,\cdot\,])(i\hbar\partial_{x_{j}}\vee u^{j}+\frac{1}{2}u^{j}\vee u^{j})R_{\varepsilon,\hbar}(t)\big)\\
&\quad\,\,+\underset{j}{\sum}\mathrm{tr}\big((\frac{i}{\hbar}[V_{\varepsilon,\hbar},\cdot\,])(i\hbar\partial_{x_{j}}\vee u^{j}+\frac{1}{2}u^{j}\vee u^{j})R_{\varepsilon,\hbar}(t)\big)\\
&=:\underset{j}{\sum}\mathrm{tr}\big(J_{1,j}R_{\varepsilon,\hbar}(t)\big)+\underset{j}{\sum}\mathrm{tr}\big(J_{2,j}R_{\varepsilon,\hbar}(t)\big).
\end{align*}
We start with the term $J_{1,j}$. Using the  Leibniz rule: $\left[A,B\vee C\right]=\left[A,B\right]\vee C+\left[A,C\right]\vee B,$ we can write $J_{1,j}$ as
\begin{align}
J_{1,j}&=\big(\partial_{t}+\frac{i}{\hbar}[-\frac{\hbar^{2}}{2}\Delta,\cdot\,]\big)\big((\frac{1}{2}u^{j}+i\hbar\partial_{x_{j}})\vee u^{j}\big) \notag\\
&=\big((\partial_{t}+\frac{i}{\hbar}[-\frac{\hbar^{2}}{2}\Delta,\cdot\,])(\frac{1}{2}u^{j}+i\hbar\partial_{x_{j}})\big)\vee u^{j}+(\frac{1}{2}u^{j}+i\hbar\partial_{x_{j}})\vee\big((\partial_{t}+\frac{i}{\hbar}[-\frac{\hbar^{2}}{2}\Delta,\cdot\,])u^{j}\big) \notag\\
&=\big((\partial_{t}+\frac{i}{\hbar}[-\frac{\hbar^{2}}{2}\Delta,\cdot\,])u^{j}\big)\vee u^{j}+i\hbar\partial_{x_{j}}\vee\big((\partial_{t}+\frac{i}{\hbar}[-\frac{\hbar^{2}}{2}\Delta,\cdot\,])u^{j}\big) \notag\\
&=(u^{j}+i\hbar\partial_{x_{j}})\vee\big((\partial_{t}+\frac{i}{\hbar}[-\frac{\hbar^{2}}{2}\Delta,\cdot\,])u^{j}\big) \notag\\
&=(u^{j}+i\hbar\partial_{x_{j}})\vee\big(\partial_{t}u^{j}+\underset{k}{\sum}u^{k}\partial_{x_{k}}u^{j}\big)+(u^{j}+i\hbar\partial_{x_{j}})\vee\big((\frac{i}{\hbar}[-\frac{\hbar^{2}}{2}\Delta,\cdot\,]-\underset{k}{\sum}u^{k}\partial_{x_{k}})u^{j}\big).\label{eq for J1j}
\end{align}
Notice that
\begin{align*}
\frac{i}{\hbar}[-\frac{\hbar^{2}}{2}\partial_{x_{k}x_{k}},u^{j}]
=-\frac{i\hbar}{2}\partial_{x_{k}}\vee [\partial_{x_{k}},u^{j}]
=-\frac{1}{2}i\hbar\partial_{x_{k}}\vee\partial_{x_{k}}u^{j},
\end{align*}
which shows that the right-hand side of \eqref{eq for J1j} is
\begin{equation*}
(u^{j}+i\hbar\partial_{x_{j}})\vee(\partial_{t}u^{j}+\underset{k}{\sum}u^{k}\partial_{x_{k}}u^{j})-\frac{1}{2}\underset{k}{\sum}(u^{j}+i\hbar\partial_{x_{j}})\vee\big((i\hbar\partial_{x_{k}}+u^{k})\vee\partial_{x_{k}}u^{j}\big),\end{equation*}
so that
\begin{align}
\underset{j}{\sum}\mathrm{tr}\big(J_{1,j}R_{\varepsilon,\hbar}(t)\big)
&=\underset{j}{\sum}\mathrm{tr}\big((u^{j}+i\hbar\partial_{x_{j}})\vee(\partial_{t}u^{j}+(u\cdot\nabla u)^{j})R_{\varepsilon,\hbar}(t)\big) \notag \\
&\quad\,\,-\frac{1}{2}\underset{k,j}{\sum}\mathrm{tr}\big((u^{j}+i\hbar\partial_{x_{j}})\vee((i\hbar\partial_{x_{k}}+u^{k})\vee\partial_{x_{k}}u^{j})R_{\varepsilon,\hbar}(t)\big).   \label{eq for J1}
\end{align}
As for $J_{2,j}$, we see that
\begin{align*}
[V_{\varepsilon,\hbar},u^{j}\vee u^{j}]=0, \qquad  
-[V_{\varepsilon,\hbar},\partial_{x_{j}}\vee u^{j}]=2\partial_{x_{j}}V_{\varepsilon,\hbar}u^{j},
\end{align*}
which yields
\begin{align*}
J_{2,j}=2\int_{\mathbb{T}^{d}}\rho_{\varepsilon,\hbar}(t,x)\partial_{x_{j}}V_{\varepsilon,\hbar}(t,x)\,u^j(t,x)\,\dd x.  \end{align*}
Then
\begin{align}
\underset{j}{\sum}\mathrm{tr}\big(J_{2,j}R_{\varepsilon,\hbar}(t)\big)=2\int_{\mathbb{T}^{d}}\rho_{\varepsilon,\hbar}(t,x)\nabla V_{\varepsilon,\hbar}(t,x)\cdot u(t,x)\,\dd x.
\label{eq for J2}
\end{align}
Combining \eqref{eq for J1} with \eqref{eq for J2}, we obtain \eqref{eq:-2}.

\smallskip
\textbf{3}. We manipulate the last term on the right-hand side of \eqref{eq:-2}:
\begin{align}
&\int_{\mathbb{T}^{d}}\rho_{\varepsilon,\hbar}(t,x)\,\nabla V_{\varepsilon,\hbar}(t,x)\cdot u(t,x)\,\dd x\notag\\
&=\int_{\mathbb{T}^{d}}\left(e^{V_{\varepsilon,\hbar}(t,x)}-\varepsilon\Delta V_{\varepsilon,\hbar}(t,x)\right)\nabla V_{\varepsilon,\hbar}(t,x)\cdot u(t,x)\,\dd x\notag\\
&=\int_{\mathbb{T}^{d}} e^{V_{\varepsilon,\hbar}(t,x)}\nabla V_{\varepsilon,\hbar}(t,x)\cdot u(t,x)\,\dd x
-\varepsilon\int_{\mathbb{T}^{d}}\Delta V_{\varepsilon,\hbar}(t,x)\nabla V_{\varepsilon,\hbar}(t,x)\cdot u(t,x)\,\dd x\notag\\
&=\int_{\mathbb{T}^{d}}\nabla e^{V_{\varepsilon,\hbar}(t,x)}\cdot u(t,x)\,\dd x
-\varepsilon\int_{\mathbb{T}^{d}}\big(\nabla:(\nabla V_{\varepsilon,\hbar}\otimes\nabla V_{\varepsilon,\hbar})\cdot u\big)(t,x)\,\dd x 
\notag\\
&\quad\,\,
+\frac{\varepsilon}{2}\int_{\mathbb{T}^{d}}(\nabla\left|\nabla V_{\varepsilon,\hbar}\right|^{2}\cdot u)(t,x)\,\dd x
\notag\\ 
&=-\int_{\mathbb{T}^{d}} e^{V_{\varepsilon,\hbar}(t,x)}\mathrm{div}\,u(t,x)\,\dd x 
+\varepsilon\int_{\mathbb{T}^{d}} Du(t,x):\left(\nabla V_{\varepsilon,\hbar}\otimes\nabla V_{\varepsilon,\hbar}\right)(t,x)\,\dd x \notag\\
&\quad\,\,-\frac{\varepsilon}{2}\int_{\mathbb{T}^{d}}\left|\nabla V_{\varepsilon,\hbar}(t,x)\right|^{2}\mathrm{div}\,u(t,x)\,\dd x. \hspace{-0.5 cm} \label{eq:-19-1}
\end{align}
Next, using Lemma \ref{density evolution}, we can prove the following estimates:
\begin{align}
I^{3}(t)+I^{4}(t)&=\frac{\dd}{\dd t}\int_{\mathbb{T}^{d}} m_{\varepsilon,\hbar}(t,x)\log(1/\rho(t,x))\,\dd x+\int_{\mathbb{T}^{d}}\partial_{t}\rho(t,x)\,\dd x \notag\\
&=\int_{\mathbb{T}^{d}}\partial_{t}e^{V_{\varepsilon,\hbar}(t,x)}\log(1/\rho(t,x))\,\dd x-\int_{\mathbb{T}^{d}}\frac{e^{V_{\varepsilon,\hbar}(t,x)}}{\rho(t,x)}\partial_{t}\rho(t,x)\,\dd x+\int_{\mathbb{R}^{d}}\partial_{t}\rho(t,x)\,\dd x \notag\\
&=\int_{\mathbb{T}^{d}}\Big(-\frac{e^{V_{\varepsilon,\hbar}(t,x)}}{\rho(t,x)}+1\Big)\partial_{t}\rho(t,x)\,\dd x+\int_{\mathbb{T}^{d}}\partial_{t}e^{V_{\varepsilon,\hbar}(t,x)}\log(1/\rho(t,x))\,\dd x \notag\\
&\quad\,\,-\int_{\mathbb{T}^{d}}\mathrm{div}\,J_{\varepsilon,\hbar}(t,x)\log(1/\rho(t,x))\,\dd x,\label{eq:-24}
\end{align}
\begin{align}
\frac{1}{2}&\mathrm{tr}\left((u^{j}+i\hbar\partial_{x_{j}})\vee\partial_{x_{j}}\log\left(\rho(t,x)\right)R_{\varepsilon,\hbar}(t)\right) \notag\\
=&\int_{\mathbb{T}^{d}} u(t,x)\cdot\nabla\log(\rho(t,x))\rho_{\varepsilon,\hbar}(t,x)\,\dd x-\int_{\mathbb{T}^{d}} J_{\varepsilon,\hbar}(t,x)\cdot\nabla\log(\rho(t,x))\,\dd x \notag\\
=&\int_{\mathbb{T}^{d}} e^{V_{\varepsilon,\hbar}(t,x)}u(t,x)\cdot\nabla\log(\rho(t,x))\,\dd x-\varepsilon\int_{\mathbb{T}^{d}}\Delta V_{\varepsilon,\hbar}(t,x)u(t,x)\cdot\nabla\log(\rho(t,x))\,\dd x \notag\\
&-\int_{\mathbb{T}^{d}} J_{\varepsilon,\hbar}(t,x)\cdot\nabla\log(\rho(t,x))\,\dd x.\label{eq:}
\end{align}
Therefore, gathering \eqref{eq:-2} with \eqref{eq:-19-1}--\eqref{eq:-24} yields
\begin{align*}
&I^{1}(t)+I^{2}(t)+I^{3}(t)+I^{4}(t)\\
&=\frac{1}{2}\underset{j}{\sum}\mathrm{tr}\left(\left(i\hbar\partial_{x_{j}}+u^{j}\right)\vee\left(\partial_{t}u^{j}+(u\cdot\nabla u)^{j}\right)R_{\varepsilon,\hbar}(t)\right)\\
&\quad\,\,-\frac{1}{4}\underset{j,k}{\sum}\mathrm{tr}\left(\left(i\hbar\partial_{x_{j}}+u^{j}\right)\vee\left(\left(i\hbar\partial_{x_{j}}+u^{j}\right)\vee(\partial_{x_{k}}u^{j})\right)R_{\varepsilon,\hbar}(t)\right)\\
&\quad\,\,-\int_{\mathbb{T}^{d}} e^{V_{\varepsilon,\hbar}(t,x)}\mathrm{div}\,u(t,x)\,\dd x
+\varepsilon\int_{\mathbb{T}^{d}} Du(t,x):\left(\nabla V_{\varepsilon,\hbar}\otimes\nabla V_{\varepsilon,\hbar}\right)(t,x)\,\dd x\\
&\quad\,\,-\frac{\varepsilon}{2}\int_{\mathbb{T}^{d}}\left|\nabla V_{\varepsilon,\hbar}(t,x)\right|^{2}\mathrm{div}\,u(t,x)\,\dd x\\
&\quad\,\,+\int_{\mathbb{T}^{d}}\Big(-\frac{e^{V_{\varepsilon,\hbar}(t,x)}}{\rho(t,x)}+1\Big)\partial_{t}\rho(t,x)\,\dd x+\varepsilon\int_{\mathbb{T}^{d}} \partial_{t}\Delta V_{\varepsilon,\hbar}(t,x)\log(1/\rho(t,x))\,\dd x\\
&\quad\,\,-\int_{\mathbb{T}^{d}}\mathrm{div}\,J_{\varepsilon,\hbar}(t,x)\log(1/\rho(t,x))\,\dd x.
\end{align*}
Thanks to equation \eqref{eq:}, we can rewrite the last term on the right-hand side of the last identity to obtain
\begin{align*}
&I^{1}(t)+I^{2}(t)+I^{3}(t)+I^{4}(t)\\
&=\frac{1}{2}\underset{j}{\sum}\mathrm{tr}\big((i\hbar\partial_{x_{j}}+u^{j})\vee(\partial_{t}u^{j}+(u\cdot \nabla u)^{j})R_{\varepsilon,\hbar}(t)\big)\\
&\quad\,\,-\frac{1}{4}\underset{j,k}{\sum}\mathrm{tr}\big((i\hbar\partial_{x_{j}}+u^{j})\vee((i\hbar\partial_{x_{j}}+u^{j})\vee(\partial_{x_{k}}u^{j}))R_{\varepsilon,\hbar}(t)\big)\\
&\quad\,\,-\int_{\mathbb{T}^{d}} e^{V_{\varepsilon,\hbar}(t,x)}\mathrm{div}\,u(t,x)\,\dd x+\varepsilon\int_{\mathbb{T}^{d}} Du(t,x):\left(\nabla V_{\varepsilon,\hbar}\otimes\nabla V_{\varepsilon,\hbar}\right)(t,x)\,\dd x\nonumber\\
&\quad\,\,-\frac{\varepsilon}{2}\int_{\mathbb{T}^{d}}\left|\nabla_{x}V_{\varepsilon,\hbar}(t,x)\right|^{2}\mathrm{div}\,u(t,x)\,\dd x\\
&\quad\,\,+\int_{\mathbb{T}^{d}}\Big(-\frac{e^{V_{\varepsilon,\hbar}(t,x)}}{\rho(t,x)}+1\Big)\partial_{t}\rho(t,x)\,\dd x+\varepsilon\int_{\mathbb{T}^{d}}\partial_{t}\Delta V_{\varepsilon,\hbar}(t,x)\log(1/\rho(t,x))\,\dd x\\
&\quad\,\,+\varepsilon\int_{\mathbb{T}^{d}}\Delta V_{\varepsilon,\hbar}(t,x)u(t,x)\cdot\nabla\log(\rho(t,x))\,\dd x+\frac{1}{2}\mathrm{tr}\big((u^{j}+i\hbar\partial_{x_{j}})\vee(\partial_{x_{j}}\log(\rho))\,R_{\varepsilon,\hbar}(t)\big)\\
&\quad\,\,-\int_{\mathbb{T}^{d}} e^{V_{\varepsilon,\hbar}(t,x)}u(t,x)\cdot\nabla\log(\rho(t,x))\,\dd x.
\end{align*}
Rearranging the terms leads to
\begin{align}
&I^{1}(t)+I^{2}(t)+I^{3}(t)+I^{4}(t) \notag\\
&=-\int_{\mathbb{T}^{d}} e^{V_{\varepsilon,\hbar}(t,x)}\big(\partial_{t}\log(\rho(t,x))+\mathrm{div}\,u(t,x)+u(t,x)\cdot\nabla\log(\rho(t,x))\big)\,\dd x
+\int_{\mathbb{T}^{d}}\partial_{t}\rho(t,x)\,\dd x\notag\\[1mm]
&\quad\,\,+\frac{1}{2}\underset{j}{\sum}\mathrm{tr}\big((i\hbar\partial_{x_{j}}+u^{j})\vee(\partial_{t}u^{j}+(u\cdot\nabla u)^{j}+\partial_{x_{j}}\log(\rho(t,x)))R_{\varepsilon,\hbar}(t)\big) \notag\\
&\quad\,\,+\varepsilon\int_{\mathbb{T}^{d}}\Delta V_{\varepsilon,\hbar}(t,x)u(t,x)\cdot\nabla\log(\rho(t,x))\,\dd x+\varepsilon\int_{\mathbb{T}^{d}}\partial_{t}\Delta V_{\varepsilon,\hbar}(t,x)\log(1/\rho(t,x))\,\dd x \notag\\[1mm]
&\quad\,\,-\frac{1}{4}\underset{j,k}{\sum}\mathrm{tr}\big((i\hbar\partial_{x_{j}}+u^{j})\vee((i\hbar\partial_{x_{j}}+u^{j})\vee(\partial_{x_{k}}u^{j}))R_{\varepsilon,\hbar}(t)\big) \notag\\
&\quad\,\,+\varepsilon\int_{\mathbb{T}^{d}}Du(t,x):\left(\nabla V_{\varepsilon,\hbar}\otimes\nabla V_{\varepsilon,\hbar}\right)(t,x)\,\dd x-\frac{\varepsilon}{2}\int_{\mathbb{T}^{d}}\left|\nabla_{x}V_{\varepsilon,\hbar}(t,x)\right|^{2}\mathrm{div}\,u(t,x)\,\dd x.\label{eq:-3}
\end{align}
Using \eqref{isothermal log}, we see that the first three
terms in \eqref{eq:-3} vanish, which shows that
\begin{align}
&I^{1}(t)+I^{2}(t)+I^{3}(t)+I^{4}(t) \notag\\
&=\varepsilon\int_{\mathbb{T}^{d}}\Delta V_{\varepsilon,\hbar}(t,x)u(t,x)\cdot\nabla\log(\rho(t,x))\,\dd x
+\varepsilon\int_{\mathbb{T}^{d}}\partial_{t}\Delta V_{\varepsilon,\hbar}(t,x)\log(1/\rho(t,x))\,\dd x \notag\\
&\quad-\frac{1}{4}\underset{j,k}{\sum}\mathrm{tr}\big((i\hbar\partial_{x_{j}}+u^{j})\vee((i\hbar\partial_{x_{j}}+u^{j})\vee(\partial_{x_{k}}u^{j}))R_{\varepsilon,\hbar}(t)\big) \notag\\
&\quad+\varepsilon\int_{\mathbb{T}^{d}}Du(t,x):\left(\nabla V_{\varepsilon,\hbar}\otimes\nabla V_{\varepsilon,\hbar}\right)(t,x)\,\dd x-\frac{\varepsilon}{2}\int_{\mathbb{T}^{d}}\left|\nabla_{x}V_{\varepsilon,\hbar}(t,x)\right|^{2}\mathrm{div}\,u(t,x)\,\dd x\nonumber\\
&=:\sum^{5}_{k=1}\mathcal{T}_{k}.\label{eq:-4}
\end{align}

\smallskip
\textbf{4}. We proceed by estimating separately each of the summands
$\mathcal{T}_{k}$ on the right-hand side in \eqref{eq:-4}. We have
\begin{align}\label{eq:-6}
\mathcal{T}_{1}&=\varepsilon\int_{\mathbb{T}^{d}}\Delta V_{\varepsilon,\hbar}(t,x)\,u(t,x)\cdot\nabla\log(\rho(t,x))\,\dd x\nonumber\\
&=-\varepsilon\int_{\mathbb{T}^{d}}\nabla V_{\varepsilon,\hbar}(t,x)\cdot\nabla\big(u(t,x)\cdot\nabla\log(\rho(t,x))\big)\,\dd x \notag\\[1mm]
&\leq\sqrt{\varepsilon}\left\Vert \sqrt{\varepsilon}\nabla V_{\varepsilon,\hbar}\right\Vert _{L^{\infty}_{t}L^{2}_{x}}\left\Vert \nabla\big(u\cdot\nabla\log(\rho)\big)\right\Vert _{L^{\infty}_{t}L^{2}_{x}} \notag\\[1mm]
&\leq\sqrt{\varepsilon}\mathcal{F}_{0}\left\Vert \nabla\big(u\cdot\nabla\log(\rho)\big)\right\Vert _{L^{\infty}_tL^{2}_x},
\end{align}
where we have used that $\left\Vert \sqrt{\varepsilon}\nabla V_{\varepsilon,\hbar}\right\Vert _{L^{\infty}_tL^{2}_x}$
is uniformly bounded in $(\varepsilon,\hbar)$ by  $\mathcal{F}_{0}$ thanks to Lemma \ref{Conservation of energy}
and the assumption that $\mathcal{F}_{\varepsilon,\hbar}(0)\leq\mathcal{F}_{0}$. 
Furthermore, integrating by parts in time yields
\begin{align}\label{eq:-5}
\int_{0}^{t}\mathcal{T}_{2}(s)\,\dd s&=-\varepsilon\int_{0}^{t}\int_{\mathbb{T}^{d}}\partial_{s}\Delta V_{\varepsilon,\hbar}(s,x)\log(\rho(s,x))\,\dd s\dd x \nonumber\\
&=\varepsilon\int_{0}^{t}\int_{\mathbb{T}^{d}}\Delta V_{\varepsilon,\hbar}\partial_{s}\log(\rho(s,x))\,\dd s\dd x-\varepsilon\int_{\mathbb{T}^{d}}\nabla\log(\rho(0,x))\nabla V_{\varepsilon,\hbar}(0,x)\,\dd x \nonumber\\
&\quad\,\,+\varepsilon\int_{\mathbb{T}^{d}}\nabla\log(\rho(t,x))\nabla V_{\varepsilon,\hbar}(t,x)\,\dd x \nonumber\\
&\leq\sqrt{\varepsilon}\left\Vert \log(\rho)\right\Vert _{W_{t}^{1,\infty}H_{x}^{1}}
\int_{0}^{t}\left\Vert \sqrt{\varepsilon}\nabla V_{\varepsilon}(\tau,\cdot)\right\Vert _{2}\,\dd \tau \notag\\
&\leq\sqrt{\varepsilon}\left\Vert \log(\rho)\right\Vert _{W_{t}^{1,\infty}H_{x}^{1}}t\mathcal{F}_{0}.
\end{align}
Clearly, we have the inequality:
\begin{equation}
\left|\mathcal{T}_{3}\right|\leq2\left\Vert \nabla u\right\Vert _{L^\infty_{t,x}}\mathrm{tr}\big((u^{j}+i\hbar\partial_{x_{j}})^{2}R_{\varepsilon,\hbar}(t)\big)=4\left\Vert \nabla u\right\Vert _{L^\infty_{t,x}}\mathcal{K}_{\varepsilon,\hbar}(t).\label{eq:-7}
\end{equation}
Finally, observe that
\begin{align}
&\left|\mathcal{T}_{4}\right|\leq2\varepsilon\left\Vert \nabla u\right\Vert _{L^\infty_{t,x}}\int_{\mathbb{T}^{d}}\left|\nabla V_{\varepsilon,\hbar}(t,x)\right|^{2}\,\dd x\leq2\left\Vert \nabla u\right\Vert _{L^\infty_{t,x}}\mathcal{V}_{\varepsilon,\hbar}(t),
\label{eq:-8}\\[1mm]
&\left|\mathcal{T}_{5}\right|\leq\frac{\varepsilon}{2}\left\Vert \mathrm{div}\,u\right\Vert _{L^\infty_{t,x}}\int_{\mathbb{T}^{d}}\left|\nabla V_{\varepsilon,\hbar}(t,x)\right|^{2}\,\dd x\leq\left\Vert \mathrm{div}\,u\right\Vert _{L^\infty_{t,x}}\mathcal{V}_{\varepsilon,\hbar}(t).\label{eq:-9}
\end{align}
Gathering \eqref{eq:-6}--\eqref{eq:-9}, we have
\[
\mathcal{E}_{\varepsilon,\hbar}(t)\leq\mathcal{E}_{\varepsilon,\hbar}(0)+C\Big(\int_{0}^{t}\mathcal{E}_{\varepsilon,\hbar}(s)\,\dd s+\sqrt{\varepsilon}t\mathcal{F}_{0}\Big),
\]
where $C=C(\Vert \nabla u\Vert_{L^\infty_{t,x}},\Vert \log(\rho)\Vert _{W_{t}^{1,\infty}H_{x}^{1}},\Vert \nabla(u\cdot\nabla\log(\rho))\Vert_{L^{\infty}_{t}L^{2}_{x}}).$
The estimate inequality of Theorem \ref{1body main intro} follows now by the Gr\"onwall inequality.

\smallskip
\textbf{5}. Using the convergence 
$$
\underset{t \in [0,T]}{\sup}\mathcal{E}_{\varepsilon,\hbar}(t) \underset{\varepsilon+\hbar \rightarrow 0}{\longrightarrow} 0
$$
proved in Step 4,  we can now establish the convergence of $\rho_{\varepsilon,\hbar}$ and $J_{\varepsilon,\hbar}$. 
The convergence
\begin{align*}
 \varepsilon\int_{\mathbb{T}^{d}} \left\vert \nabla V_{\varepsilon,\hbar}(t,x)\right\vert^{2}\,\dd x \underset{\varepsilon+\hbar\rightarrow0}{\longrightarrow} 0
\end{align*}
implies that
\begin{align*}
 \underset{t\in [0,T]}{\sup}{\left\Vert \rho_{\varepsilon,\hbar}(t,\cdot)-m_{\varepsilon,\hbar}(t,\cdot)\right\Vert_{\dot{H}^{-1}}\underset{\varepsilon+\hbar\rightarrow0}{\longrightarrow} 0}.
\end{align*}

To prove the convergence of $\rho_{\varepsilon,\hbar},$ we need the following  Csisz\'{a}r-Kullback-Pinsker inequality:
\begin{lem}[\cite{villani2009optimal}]\label{CZK}
Let $(\rho,m)\in \big(\mathcal{P}(\mathbb{T}^{d})\cap L^{1}(\mathbb{T}^{d})\big)^2$. Then
\begin{align*}
 \|\rho-m\|_{1}\leq\sqrt{2\int_{\mathbb{T}^{d}}\log\big(\frac{m(x)}{\rho(x)}\big)m(x) \,\dd x}.
\end{align*}
\end{lem}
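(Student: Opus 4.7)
The plan is to follow the classical Pinsker--Csisz\'ar--Kullback strategy, which reduces the whole claim to a one-variable pointwise inequality combined with Cauchy--Schwarz. The probability normalization $\int_{\mathbb{T}^{d}}\rho\,\dd x=\int_{\mathbb{T}^{d}}m\,\dd x=1$ plays two roles: it allows me to symmetrize the right-hand side into the pointwise nonnegative form $\int_{\mathbb{T}^{d}}\bigl(m\log(m/\rho)-m+\rho\bigr)\,\dd x$ at no cost, and it also pins down the second Cauchy--Schwarz factor at the end.

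The first step is to establish the scalar pointwise inequality
\begin{equation*}
3(a-b)^{2}\;\leq\;(2b+4a)\bigl(b\log(b/a)-b+a\bigr)\qquad\text{for all }a,b>0.
\end{equation*}
By joint homogeneity I reduce this to a single-variable statement in $t=a/b$, which I would verify by matching values and the first three derivatives at $t=1$ (the gap vanishes to fourth order there) together with the correct asymptotics as $t\to 0^{+}$ and $t\to\infty$. This is elementary calculus but it is the only real computation in the argument.

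Then I would apply Cauchy--Schwarz with weights $(2m+4\rho)^{\pm 1/2}$ to obtain
\begin{equation*}
\int_{\mathbb{T}^{d}}|\rho-m|\,\dd x\;\leq\;\Bigl(\int_{\mathbb{T}^{d}}\frac{(\rho-m)^{2}}{2m+4\rho}\,\dd x\Bigr)^{1/2}\Bigl(\int_{\mathbb{T}^{d}}(2m+4\rho)\,\dd x\Bigr)^{1/2}.
\end{equation*}
Bounding the first factor by $\tfrac{1}{3}\int_{\mathbb{T}^{d}}\bigl(m\log(m/\rho)-m+\rho\bigr)\,\dd x$ via the pointwise inequality, and evaluating the second factor as $\sqrt{6}$ by the probability normalization, yields $\sqrt{2\int_{\mathbb{T}^{d}}\bigl(m\log(m/\rho)-m+\rho\bigr)\,\dd x}$; one further use of $\int\rho=\int m=1$ erases the $-m+\rho$ contribution and gives the claimed bound. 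The only technical caveat concerns the convention for $m\log(m/\rho)$ when one of the densities vanishes: $m\log(m/\rho)=0$ on $\{m=0\}$ and $+\infty$ on $\{\rho=0<m\}$. In the latter case the inequality is trivially valid; otherwise the Cauchy--Schwarz step is confined to $\{\rho+m>0\}$. No substantive obstacle arises beyond the one-variable inequality in Step 1.
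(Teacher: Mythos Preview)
The paper does not prove this lemma; it is simply quoted from Villani's book as a known result. Your argument is the classical Csisz\'ar--Kullback--Pinsker proof and is correct: the pointwise inequality $3(a-b)^2\le(2b+4a)\bigl(b\log(b/a)-b+a\bigr)$ together with Cauchy--Schwarz and the normalization $\int(2m+4\rho)=6$ gives exactly the constant $\sqrt{2}$.

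One small remark on Step~1: the verification you outline (``fourth-order vanishing at $t=1$ plus asymptotics at $0^+$ and $\infty$'') is not in itself a proof of nonnegativity, since it does not preclude the gap from dipping below zero somewhere in between. The clean way is to set $f(t)=(2+4t)(t-1-\log t)-3(t-1)^2$ and compute directly
\[
f''(t)=\frac{2(t-1)^2}{t^2}\ge 0,
\]
so $f$ is convex on $(0,\infty)$ with $f(1)=f'(1)=0$, hence $f\ge 0$. This replaces the asymptotic checks and closes the argument immediately.
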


\smallskip
By virtue of Lemma \ref{CZK}, the convergence
\begin{align*}
    \int_{\mathbb{T}^{d}} m_{\varepsilon,\hbar}(t,x)\log\big(\frac{m_{\varepsilon,\hbar}(t,x)}{\rho(t,x)}\big)\,\dd x\underset{\varepsilon+\hbar\rightarrow0}{\longrightarrow} 0
\end{align*}
implies
\begin{align*}
 \underset{t\in [0,T]}{\sup}{\left\Vert m_{\varepsilon,\hbar}(t,\cdot)-\rho(t,\cdot)\right\Vert_{1}\underset{\varepsilon+\hbar\rightarrow0}{\longrightarrow} 0}.
\end{align*}
Thus, by the triangle inequality, we obtain
\begin{align}
 \underset{t\in [0,T]}{\sup}{\left\Vert \rho_{\varepsilon,\hbar}(t,\cdot)-\rho(t,\cdot)\right\Vert_{\dot{H}^{-1}}\underset{\varepsilon+\hbar\rightarrow0}{\longrightarrow} 0}.  \label{Convergence of density}
\end{align}
The convergence of $J_{\varepsilon,\hbar}$ is established as follows:
Given a Lipschitz vector field $b \in W^{1,\infty}(\mathbb{T}^{d}; \mathbb{T}^{d})$, we have
\begin{align*}
&\int_{\mathbb{T}^{d}} \big(J_{\varepsilon,\hbar}(t,x)-\rho(t,x)u(t,x)\big)b(x)\,\dd x\\
&=
\int_{\mathbb{T}^{d}} \big(\rho_{\varepsilon,\hbar}(t,x)-\rho(t,x)\big)u(t,x)b(x)\,\dd x
+
\int_{\mathbb{T}^{d}} \big(J_{\varepsilon,\hbar}(t,x)-\rho_{\varepsilon,\hbar}(t,x)u(t,x)\big)b(x)\,\dd x.
\end{align*}
The first integral on the right-hand side above tends to $0$ as $\varepsilon+\hbar\rightarrow 0$ by \eqref{Convergence of density}, while the second integral tends to $0$ due to the convergence of the kinetic part: $\underset{t\in [0,T]}{\sup}\mathcal{K}_{\varepsilon,\hbar}(t)\underset{\varepsilon+\hbar\rightarrow0}{\longrightarrow} 0$. 
Indeed, we have
\begin{align*}
\bigg\vert \int_{\mathbb{T}^{d}} \big(J_{\varepsilon,\hbar}(t,x)-\rho_{\varepsilon,\hbar}(t,x)u(t,x)\big)b(x)\,\dd x\bigg\vert
&=\bigg\vert \frac{1}{2}\sum_{j=1}^{d}\mathrm{tr}\big((i\hbar\partial_{x_{j}}+u^{j})\vee b^{j}R_{\varepsilon,\hbar}(t)\big)\bigg\vert\\
&\leq  2\left\Vert b\right\Vert_{\infty}\sqrt{\mathcal{K}_{\varepsilon,\hbar}(t)},
\end{align*}
by the Cauchy-Schwartz inequality.
This completes the proof of Theorem \ref{1body main intro}.
$\hfill\Box$

\section{The N-Body Problem}
This section concerns the $N$-body quantum mean-field limit. Being
a triple limit, this limit is naturally more involved and imposes
additional considerations, namely the dimensionality and an asymptotic
relation between $\varepsilon$ and $N$. Unless otherwise stated, throughout this section, we take $d=1$. Let us start by recalling the basic notation from the theory of density operators.

\begin{defn}
Let $R_{N}\in \mathcal{D}_{s}(\mathfrak{H}^{\otimes N})$. 
\begin{enumerate}
\item[\rm (i)]
For each $1\leq k\leq N,$ define \textit{the $k$-th marginal of $R_{N}$}, as the unique element $R_{N:k}\in \mathcal{D}_{s}(\mathfrak{H}^{\otimes k})$, such that
\begin{align*}
\mathrm{tr}_{\mathfrak{H}^{\otimes k}}(A_{k}R_{N:k})=\mathrm{tr}_{\mathfrak{H}^{\otimes N}}\big((A_{k}\otimes I^{\otimes(N-k)})R_{N}\big)
\end{align*}
for all bounded operators $A_{k}$ on $\mathfrak{H}^{\otimes k}$. \label{def of marginal}

\smallskip
\item[\rm (ii)] 
The \textit{current} of $R_{N}$, denoted by $J_{\hbar,N:1}$, is the unique signed Radon measure on $\mathbb{T}$ such that, for all $a \in C(\mathbb{T})$, 
 \begin{align*}
\int_{\mathbb{T}}a(x)J_{\hbar,N:1}(\dd x)=\frac{1}{2}\mathrm{tr}\big(a\lor(-i\hbar\partial_{x})R_{N:1}\big).
 \end{align*}
 \label{def of current}
 \end{enumerate}
\end{defn}
Recall that the quantum
Hamiltonian is given by
\begin{align*}
\mathscr{H}_{\varepsilon,\hbar,N}(X_{N})
&=-\frac{\hbar^{2}}{2}\sum^N_{k=1}\Delta_{x_{k}}+N\int_{\mathbb{T}}V_{\varepsilon,X_{N}}(x)m_{\varepsilon,X_{N}}(x)\,\dd x\nonumber\\
&\quad\,\,+\frac{N}{2\varepsilon}\int_{\mathbb{T}\times\mathbb{T}}K(x-y)(\mu_{X_{N}}-m_{\varepsilon,X_{N}})^{\otimes2}(\dd x\dd y)\\
&=: \mathscr{K}_{N,\hbar}+\mathscr{I}_{\varepsilon,X_{N}}+\mathscr{V}_{\varepsilon,X_{N}},
\end{align*}
where
\begin{equation}
-\varepsilon V_{\varepsilon,X_{N}}''=\mu_{X_{N}}-m_{\varepsilon,X_{N}}
\qquad \mbox{with $\,\,m_{\varepsilon,X_{N}}=e^{V_{\varepsilon,X_{N}}}$}.\label{eq:-11}
\end{equation}
The associated von Neumann equation is
\begin{align*}
\begin{cases}
i\hbar\partial_{t}R_{\varepsilon,\hbar,N}=\left[\mathscr{H}_{\varepsilon,\hbar,N},R_{\varepsilon,\hbar,N}\right],\\[1mm]
R_{\varepsilon,\hbar,N}(0)=R_{\varepsilon,\hbar,N}^{\mathrm{in}}.
\end{cases}
\end{align*}
Recall that the modulated energy is given by
\begin{align*}
&\mathcal{\mathcal{E}}_{\varepsilon,\hbar,N}(t)\\
&=\underbrace{\frac{1}{2N}\sum^N_{j=1}\mathrm{tr}\big((i\hbar\partial_{x_{j}}+u(t,x_{j}))^{2}R_{\varepsilon,\hbar,N}(t)\big)}_{\coloneqq \mathcal{K}_{\varepsilon,\hbar,N}(t)}\\
&\quad\,+\int_{\mathbb{T}^{N}}\int_{\mathbb{T}}\Big(m_{\varepsilon,X_{N}}(t,x)\log(\frac{m_{\varepsilon,X_{N}}(t,x)}{\rho(t,x)})
-m_{\varepsilon,X_{N}}(t,x)+\rho(t,x)\Big)\,\dd x\,\rho_{\varepsilon,\hbar,N}(t,X_{N})\,\dd X_{N}\\
&\quad\,+\underbrace{\frac{1}{2\varepsilon}\int_{\mathbb{T}^{N}}\int_{\mathbb{T}\times\mathbb{T}}K(x-y)(\mu_{X_{N}}-m_{\varepsilon,X_{N}})^{\otimes2}(\dd x \dd y)\,\rho_{\varepsilon,\hbar,N}(t,X_{N})\,\dd X_{N}}_{\coloneqq \mathcal{V}_{\varepsilon,\hbar,N}(t)},
\end{align*}
and the total energy is given by
\begin{align}
\mathcal{F}_{\varepsilon,\hbar,N}(t)
\coloneqq&\,\frac{1}{2N}\mathrm{tr}\big(-\hbar^{2}\sum^{N}_{j=1}\Delta_{x_{j}}R_{\varepsilon,\hbar,N}(t)\big) \notag\\
&\,\,+\int_{\mathbb{T}^{N}}\int_{\mathbb{T}}V_{\varepsilon,X_{N}}(x)m_{\varepsilon,X_{N}}(x)\,\,\dd x\,\rho_{\varepsilon,\hbar,N}(t,X_{N})\,\dd X_{N} \notag\\
&\,\,+\frac{1}{2\varepsilon}\int_{\mathbb{T}^{N}}\int_{\mathbb{T}\times\mathbb{T}}K(x-y)(\mu_{X_{N}}-m_{\varepsilon,X_{N}})^{\otimes2}(\dd x\dd y)\,\rho_{\varepsilon,\hbar,N}(t,X_{N})\,\dd X_{N}. \label{totalN}
\end{align}
As in the case of 1-body dynamics, the conservation of energy holds, and $\rho_{\varepsilon,\hbar,N:1}$ evolves according to an evolution equation.
\begin{lem}
\label{N body Conservation of energy }Let  $R_{\varepsilon,\hbar,N}(t)$ be as in {\rm Theorem
\ref{3rd main result intro}}. Then

\smallskip
\begin{enumerate}
    \item [\rm (i)] $\frac{\dd}{\dd t}\mathcal{F}_{\varepsilon,\hbar,N}(t)=0${\rm ,}

    \medskip
    \item[\rm (ii)] $\partial_{t}\rho_{\varepsilon,\hbar,N:1}+\partial_{x}J_{\varepsilon,\hbar,N:1}=0.$
\end{enumerate}

\end{lem}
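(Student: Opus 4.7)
The plan is to mirror the two steps used in the 1-body case, namely Lemmas \ref{density evolution} and \ref{Conservation of energy}. For part (i), the key observation is that the total energy reduces to a single trace:
\begin{align*}
\mathcal{F}_{\varepsilon,\hbar,N}(t) = \frac{1}{N}\,\mathrm{tr}\big(\mathscr{H}_{\varepsilon,\hbar,N}\, R_{\varepsilon,\hbar,N}(t)\big).
\end{align*}
Indeed, both $\mathscr{I}_{\varepsilon,X_N}$ and $\mathscr{V}_{\varepsilon,X_N}$ act as multiplication operators (in $X_N$) on $\mathfrak{H}^{\otimes N}=L^2(\mathbb{T}^N)$, so tracing them against $R_{\varepsilon,\hbar,N}(t)$ reproduces the second and third summands of \eqref{totalN} via integration against $\rho_{\varepsilon,\hbar,N}(t,X_N)$. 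Since $\mathscr{H}_{\varepsilon,\hbar,N}$ is time-independent, differentiating and using the von Neumann equation yields
\begin{align*}
\frac{\dd}{\dd t}\mathcal{F}_{\varepsilon,\hbar,N}(t) = \frac{1}{i\hbar N}\,\mathrm{tr}\big(\mathscr{H}_{\varepsilon,\hbar,N}\,[\mathscr{H}_{\varepsilon,\hbar,N},R_{\varepsilon,\hbar,N}(t)]\big) = 0
\end{align*}
by the cyclicity of the trace.

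For part (ii), I would imitate the computation in the proof of Lemma \ref{density evolution}. Fix $a\in C^1(\mathbb{T})$ and use the definition of the first marginal to write
\begin{align*}
\int_\mathbb{T} a(x)\,\rho_{\varepsilon,\hbar,N:1}(t,x)\,\dd x = \mathrm{tr}_{\mathfrak{H}^{\otimes N}}\big((a(x_1)\otimes I^{\otimes(N-1)})R_{\varepsilon,\hbar,N}(t)\big).
\end{align*}
Differentiating in $t$ via the von Neumann equation gives $\frac{i}{\hbar}\mathrm{tr}([\mathscr{H}_{\varepsilon,\hbar,N}, a(x_1)]R_{\varepsilon,\hbar,N}(t))$. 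The decisive observation is that $\mathscr{I}_{\varepsilon,X_N}$ and $\mathscr{V}_{\varepsilon,X_N}$ are multiplication operators in $X_N$ and therefore commute with $a(x_1)$; hence only the kinetic piece contributes. A direct calculation gives $[-\tfrac{\hbar^2}{2}\Delta_{x_1}, a(x_1)] = -\tfrac{\hbar^2}{2}\big(\partial_{x_1}\vee a'(x_1)\big)$, so that
\begin{align*}
\frac{\dd}{\dd t}\int_\mathbb{T} a(x)\,\rho_{\varepsilon,\hbar,N:1}(t,x)\,\dd x = \tfrac{1}{2}\,\mathrm{tr}\big(((-i\hbar\partial_{x_1})\vee a'(x_1))\,R_{\varepsilon,\hbar,N}(t)\big).
\end{align*}
Descending to the first marginal and applying Definition \ref{def of current}, the right-hand side equals $\int_\mathbb{T} a'(x)\,J_{\varepsilon,\hbar,N:1}(t,\dd x)$; an integration by parts in $x$ then yields the continuity equation in the distributional sense.

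Both identities are essentially algebraic, and the only delicate point is to justify the trace manipulations -- namely the cyclicity used in part (i) and the commutation with an unbounded operator in part (ii) -- which is standard under the regularity assumption $\mathrm{tr}((-\Delta_N)^2 R_{\varepsilon,\hbar,N}^{\mathrm{in}})<\infty$ propagated in time by Lemma \ref{Kato thm }. I do not anticipate a serious obstacle: the multiplication-operator structure of $\mathscr{I}_{\varepsilon,X_N}$ and $\mathscr{V}_{\varepsilon,X_N}$ renders both identities a direct transcription of the commutator calculus already developed for the 1-body problem in Section \ref{The 1-Body Semi-Classical Limit}.
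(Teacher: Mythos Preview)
Your proposal is correct. The paper omits the proof entirely, merely noting that it is ``similar to the previous considerations,'' i.e., to Lemmas \ref{density evolution} and \ref{Conservation of energy}. Your argument for part (ii) is exactly such a transcription.

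For part (i), however, your approach is cleaner than a direct imitation of Lemma \ref{Conservation of energy}. In the 1-body problem the Hamiltonian $H_{\varepsilon,\hbar}=-\tfrac{\hbar^2}{2}\Delta+V_{\varepsilon,\hbar}$ is \emph{time-dependent} through the self-consistent potential $V_{\varepsilon,\hbar}[\psi_{\varepsilon,\hbar}(t)]$, so energy conservation requires the nontrivial cancellation \eqref{eq:-20}--\eqref{m} involving the Poisson--Boltzmann equation. In the $N$-body problem, by contrast, $\mathscr{H}_{\varepsilon,\hbar,N}$ is a fixed (time-independent) self-adjoint operator on $\mathfrak{H}^{\otimes N}$: the potential $V_{\varepsilon,X_N}$ depends on the configuration $X_N$ but not on the state $R_{\varepsilon,\hbar,N}(t)$. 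Your observation that $\mathcal{F}_{\varepsilon,\hbar,N}(t)=\tfrac{1}{N}\,\mathrm{tr}(\mathscr{H}_{\varepsilon,\hbar,N}R_{\varepsilon,\hbar,N}(t))$ therefore reduces conservation to a one-line cyclicity argument, bypassing the explicit continuity-equation manipulation needed in the mean-field case. This is a genuine structural simplification worth recording.
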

The proof is omitted, since it is similar to the previous considerations.
In variance with the quasi-neutral limit for VP, the quasi-neutral limit
for VPME necessitates an $L^{\infty}$--bound on $e^{V_{\varepsilon,X_N}},$
which is not uniformly bounded in $\varepsilon$.  
Therefore, in order to derive the mean-field limit, the
vanishing rate of $\varepsilon=\varepsilon(N)$ with respect to $N$ is slower 
than in the usual quasi-neutral mean-field limit. This bound is included in
the following lemma{\rm :}

\begin{lem}[\cite{griffin2023stability}, Proposition 3.2]
\label{ nonlinear interaction} 
Let $(\tilde{V}_{\varepsilon,X_{N}},\hat{V}_{\varepsilon,X_{N}})$ be the solution of the system{\rm :}
\begin{equation}
\begin{cases}
-\varepsilon \tilde{V}_{\varepsilon,X_{N}}''=\mu_{X_{N}}-1,\\[0.5mm]
-\varepsilon \hat{V}_{\varepsilon,X_{N}}''=1-e^{\tilde{V}_{\varepsilon,X_{N}}+\hat{V}_{\varepsilon,X_{N}}}\label{eq:-30}
\end{cases}
\end{equation}
ensured by {\rm Lemma \ref{existence uniqueness 1D}}.
Then $V_{\varepsilon,X_{N}}=\tilde{V}_{\varepsilon,X_{N}}+\hat{V}_{\varepsilon,X_{N}}$ satisfies the estimate:
\begin{align*}
    \left\Vert V_{\varepsilon,X_{N}}\right\Vert_{\infty}\leq \frac{1}{\varepsilon}.
\end{align*}
\end{lem}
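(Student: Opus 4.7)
The plan is to decompose $V_{\varepsilon,X_N}=\tilde V_{\varepsilon,X_N}+\hat V_{\varepsilon,X_N}$ as provided by the preceding remark, bound each summand separately by a fixed multiple of $\tfrac{1}{\varepsilon}$, and then conclude by the triangle inequality. The linear piece $\tilde V_{\varepsilon,X_N}$ is handled via the explicit 1-D Green's function, while the nonlinear piece $\hat V_{\varepsilon,X_N}$ is controlled by a maximum principle comparison with $\tilde V_{\varepsilon,X_N}$.

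First I would bound $\tilde V_{\varepsilon,X_N}$ using the explicit Green function $K(x)=\tfrac{x^2-|x|}{2}$ of $-\partial_x^2$ on $\mathbb{T}$ (recorded earlier in the paper). A direct computation gives $\|K\|_\infty=\tfrac{1}{8}$ (attained at $\pm\tfrac12$) and $\int_{\mathbb{T}}K=-\tfrac{1}{12}$. Since $\mu_{X_N}-1$ has zero mean, the unique zero-mean solution of $-\varepsilon\tilde V_{\varepsilon,X_N}''=\mu_{X_N}-1$ admits the representation
\begin{align*}
\tilde V_{\varepsilon,X_N}(x)=\frac{1}{\varepsilon}\Big(\frac{1}{N}\sum_{k=1}^{N}K(x-x_k)+\tfrac{1}{12}\Big),
\end{align*}
so a pointwise bound using $K\in[-\tfrac18,0]$ immediately yields $\|\tilde V_{\varepsilon,X_N}\|_\infty\le\tfrac{1}{12\varepsilon}$.

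Next I would control $\hat V_{\varepsilon,X_N}$ by a maximum principle. Since $\tilde V_{\varepsilon,X_N}$ is Lipschitz, the right-hand side of $-\varepsilon\hat V_{\varepsilon,X_N}''=1-e^{\tilde V_{\varepsilon,X_N}+\hat V_{\varepsilon,X_N}}$ is continuous, and elliptic bootstrapping gives $\hat V_{\varepsilon,X_N}\in C^2(\mathbb{T})$. If $x^*\in\mathbb{T}$ realizes the maximum of $\hat V_{\varepsilon,X_N}$, then $\hat V_{\varepsilon,X_N}''(x^*)\le 0$, so the equation forces $1-e^{V_{\varepsilon,X_N}(x^*)}\ge 0$, hence $V_{\varepsilon,X_N}(x^*)\le 0$ and thus
\begin{align*}
\hat V_{\varepsilon,X_N}(x^*)\le -\tilde V_{\varepsilon,X_N}(x^*)\le \|\tilde V_{\varepsilon,X_N}\|_\infty.
\end{align*}
A symmetric argument at the minimizer yields the matching lower bound, giving $\|\hat V_{\varepsilon,X_N}\|_\infty\le\|\tilde V_{\varepsilon,X_N}\|_\infty\le\tfrac{1}{12\varepsilon}$.

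The triangle inequality then produces $\|V_{\varepsilon,X_N}\|_\infty\le\tfrac{1}{6\varepsilon}\le\tfrac{1}{\varepsilon}$. The only technical subtlety is the regularity needed to evaluate the equation pointwise at the extrema of $\hat V_{\varepsilon,X_N}$: $\tilde V_{\varepsilon,X_N}$ has corners at the particle positions $x_k$ but is continuous, which suffices to make $\hat V_{\varepsilon,X_N}$ genuinely $C^2$. Beyond this routine regularity check and the bookkeeping of the explicit constants from $K$, I do not anticipate any serious obstacle.
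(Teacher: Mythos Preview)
Your argument is correct. The decomposition into $\tilde V_{\varepsilon,X_N}$ and $\hat V_{\varepsilon,X_N}$, the explicit Green-function bound $\|\tilde V_{\varepsilon,X_N}\|_\infty\le\tfrac{1}{12\varepsilon}$, and the maximum/minimum principle comparison giving $\|\hat V_{\varepsilon,X_N}\|_\infty\le\|\tilde V_{\varepsilon,X_N}\|_\infty$ are all sound; the regularity bootstrap you sketch (continuous right-hand side $\Rightarrow\hat V_{\varepsilon,X_N}\in C^2$) is enough to justify evaluating the equation pointwise at the extrema. In fact you obtain the sharper constant $\tfrac{1}{6\varepsilon}$.

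There is nothing to compare against in the paper itself: this lemma is quoted from an external reference (\cite{griffin2023stability}, Proposition~3.2) and the paper gives no proof. Your self-contained argument is therefore a genuine addition rather than a variant of anything in the text. The approach you use---explicit 1-D Green kernel plus an elementary maximum principle---is essentially the natural one in one dimension, and it is plausibly close in spirit to what the cited reference does, though that cannot be confirmed from the paper alone.
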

We also make use of the following lemma that provides the stability with respect to the perturbations by measures:
\begin{lem}[\cite{griffin2023stability}, Proposition 3.2]
Let $h_{i}\in \mathcal{P}(\mathbb{T})\,$ {\rm(}$i=1,2${\rm)} be probability measures. 
Let $(\tilde{V}_{i}, \hat{V}_{i})\in (H^1(\mathbb{T}))^2$ be the solution of the system:
\begin{align*}
\begin{cases}
-\varepsilon \tilde{V}''_{i}=h_{i}-1,\\[0.5mm]
-\varepsilon \hat{V}''_{i}= 1-e^{\tilde{V}_{i}+\hat{V}_{i}}.
\end{cases}
\end{align*}
Then
\begin{align*}
\| \tilde{V}'_{1}-\tilde{V}'_{2}\|_{2}
+ 4 \sqrt{\varepsilon}\, \|\hat{V}'_{1}-\hat{V}'_{2}\|_{2}
\leq   \frac{1}{\varepsilon}W_{1}(h_{1},h_{2}),
\end{align*}
where $W_1$ denotes the $1$-Wasserstein distance.
\label{1d stability for measure}
\end{lem}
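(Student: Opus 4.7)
The plan is to combine an energy identity for the difference of the two systems with the Kantorovich--Rubinstein dual characterization of $W_1$, exploiting both the integrability of the linear 1-D Poisson equation and the a priori $L^\infty$ control on $V_{\varepsilon,\cdot}$ from Lemma~\ref{ nonlinear interaction}. Set $\alpha \coloneqq \tilde V_1 - \tilde V_2$, $\beta \coloneqq \hat V_1 - \hat V_2$, and $V_i = \tilde V_i + \hat V_i$. Taking differences of the two systems,
\begin{equation*}
-\varepsilon \alpha'' = h_1 - h_2, \qquad -\varepsilon \beta'' = e^{V_2} - e^{V_1}.
\end{equation*}

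For the linear $\tilde V$ part, I would integrate the first equation once in one dimension to obtain $\varepsilon\alpha' = -(F_1 - F_2) + c$, where $F_i$ is a primitive of $h_i$ and $c$ is fixed by the zero-mean/periodicity requirement. The 1-D identity $W_1(h_1,h_2) = \|F_1 - F_2\|_{L^1(\mathbb{T})}$, together with the universal bound $|F_1 - F_2| \le 1$ (since $h_1,h_2$ are probability measures), would then yield the $L^2$ estimate on $\alpha'$ with prefactor $1/\varepsilon$ via a H\"older-type manipulation on the zero-mean primitive.

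For the nonlinear $\hat V$ part, I would add the two difference equations and test against $V_1 - V_2 = \alpha+\beta$, obtaining
\begin{equation*}
\varepsilon\,\|(\alpha+\beta)'\|_{L^2}^2 + \int_{\mathbb T}(e^{V_1}-e^{V_2})(V_1-V_2)\,\dd x = \int_{\mathbb T}(h_1-h_2)(\alpha+\beta)\,\dd x \leq \|(\alpha+\beta)'\|_\infty\,W_1(h_1,h_2),
\end{equation*}
where the middle term is non-negative by monotonicity of the exponential and the last inequality is Kantorovich--Rubinstein. The $L^\infty$ bound on $V_{\varepsilon,\cdot}$, combined with the elliptic theory for the semilinear Poisson--Boltzmann equation developed in~\cite{griffin2021global}, then supplies a Lipschitz-type control on $\hat V_i'$, bounding $\|(\alpha+\beta)'\|_\infty$ and providing the $\hat V$ contribution to the estimate, with the $\sqrt{\varepsilon}$ weight arising when passing from the $L^\infty$ test-function control back to an $L^2$ statement.

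The final step is a weighted Young/triangle argument that recombines the two pieces into the claimed form $\|\tilde V_1' - \tilde V_2'\|_{L^2} + 4\sqrt\varepsilon\,\|\hat V_1' - \hat V_2'\|_{L^2} \leq \frac{1}{\varepsilon}W_1(h_1,h_2)$. The main obstacle will be keeping the constants sharp: because the exponential couples $\tilde V$ and $\hat V$, a naive triangle inequality on $\alpha+\beta$ would destroy the prefactor $1$ on the dominant term. One must instead use the sign-definiteness of $(e^{V_1}-e^{V_2})(V_1-V_2)$ together with the precise 1-D Lipschitz estimate on $\hat V_i'$ to absorb the cross-terms with exactly the coefficient $4\sqrt\varepsilon$, which is the place where the one-dimensional setting is used essentially.
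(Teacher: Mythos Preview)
The paper does not prove this lemma; it is quoted from \cite{griffin2023stability} (Proposition~3.2) and used as a black box in Lemma~\ref{Lip in configuration} and in \S\ref{well prepared sec}. There is no in-paper argument to compare your attempt against.

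Your strategy---integrate once for $\tilde V$, energy identity plus Kantorovich--Rubinstein for the full system, exploit the sign of $(e^{V_1}-e^{V_2})(V_1-V_2)$---is the natural one for this kind of stability estimate. Note, however, that it produces bounds of order $\varepsilon^{-1}\sqrt{W_1}$ rather than $\varepsilon^{-1}W_1$: testing against $\alpha+\beta$ and using the a priori control $\|(\alpha+\beta)'\|_\infty\lesssim\varepsilon^{-1}$ yields $\varepsilon\|(\alpha+\beta)'\|_2^2\lesssim\varepsilon^{-1}W_1$, hence $\|(\alpha+\beta)'\|_2\lesssim\varepsilon^{-1}W_1^{1/2}$. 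This is not a defect of your argument. Already for the linear part the square root is unavoidable: with $h_1=\delta_0$ and $h_2=\delta_\eta$ one computes directly $\|\tilde V_1'-\tilde V_2'\|_2=\varepsilon^{-1}\sqrt{\eta(1-\eta)}$, which exceeds $\varepsilon^{-1}W_1=\varepsilon^{-1}\eta$ for small $\eta$. The inequality as transcribed in the paper therefore appears to carry a misprint (the left-hand side should presumably be squared, or $W_1$ replaced by $W_1^{1/2}$). Your method would establish the corrected version, and the concern you raise about sharp constants in the cross-term absorption is not the real obstruction.
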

An additional significant ingredient in the renormalization argument is the
following asymptotic positivity, coercivity inequality, and commutator estimates, all of which are fundamental discoveries due to \cite{duerinckx2020mean}. We state these inequalities only for the 1-D periodic case, although the results hold in greater generality.  Denote
\begin{align*}
 \mathscr{E}(X_{N},\mu)\coloneqq  \int_{\mathbb{T}\times\mathbb{T}}K(x-y)(\mu_{X_{N}}-\mu)^{\otimes2}(\dd x\dd y)
\end{align*}
so that
\begin{align*}
\mathscr{V}_{\varepsilon,X_{N}}=\frac{N}{2\varepsilon}\mathscr{E}(X_{N},m_{\varepsilon,X_{N}}).
\end{align*}

\begin{lem}[\cite{duerinckx2020mean}, Corollary 3.5] \label{duerinckslemma}
 Let $\mu\in L^{\infty}(\mathbb{T})\cap \mathcal{P}(\mathbb{T})$ and  ${X}_{N}\in \mathbb{T}^{N}\setminus \triangle_{N}$. 
 Then

 \smallskip
 \begin{enumerate}
     \item [\rm (i)]  $\mathscr{E}(X_{N},\mu) +\frac{1+\left\Vert \mu \right\Vert_{\infty}}{N^{2}}\geq 0${\rm ;}

     \smallskip
     \item [\rm (ii)] There are some $\lambda,C>0$ such that, for all $\varphi\in W^{1,\infty}(\mathbb{T}^{d}),$ 
\[
\bigg\vert \int_{\mathbb{T}}\left(\mu_{X_{N}}-\mu\right)\varphi(x)\,\dd x \bigg\vert 
\leq C\left\Vert \nabla\varphi\right\Vert _{\infty}N^{-\lambda}+\left\Vert \nabla\varphi\right\Vert _{2}\Big(\mathscr{E}(X_{N},\mu)+\frac{1+\left\Vert \mu \right\Vert_{\infty}}{N^{2}}\Big)^{\frac{1}{2}}.
\]
 \end{enumerate}
\end{lem}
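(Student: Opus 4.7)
The plan is to deploy the renormalization method of Serfaty \cite{duerinckx2020mean}, which reads $\mathscr{E}$ as a regularized $H^{-1}$-seminorm of $\mu_{X_N} - \mu$ and then controls the defect with respect to the smoothed energy. I would introduce a small scale $\eta > 0$ (to be tuned at the end) and a nonnegative symmetric mollifier $\chi_\eta$, set $\mu_{X_N}^\eta := \mu_{X_N}*\chi_\eta$, and let $V^\eta$ be the unique mean-zero solution of $-(V^\eta)'' = \mu_{X_N}^\eta - \mu$ on $\mathbb{T}$. Since $\mu_{X_N}^\eta - \mu$ belongs to $L^2(\mathbb{T})$ with zero integral, Plancherel yields the nonnegativity
\[
\int_{\mathbb{T}\times\mathbb{T}} K(x-y)\bigl(\mu_{X_N}^\eta - \mu\bigr)(x)\bigl(\mu_{X_N}^\eta - \mu\bigr)(y)\,\dd x\,\dd y = \int_\mathbb{T}|(V^\eta)'|^2\,\dd x \geq 0.
\]

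For part (i), I would expand the difference between $\mathscr{E}(X_N,\mu)$ and the above smoothed energy into three pieces: a self-interaction correction $\tfrac{1}{N}\bigl(K(0) - K_\eta(0)\bigr)$ with $K_\eta := K*\chi_\eta*\chi_\eta$, an off-diagonal correction $\tfrac{1}{N^2}\sum_{i\neq j}(K - K_\eta)(x_i - x_j)$, and a cross-term with $\mu$ of the form $-2\int(\mu_{X_N} - \mu_{X_N}^\eta)G_\mu\,\dd x$ where $G_\mu := K*\mu$. Using the explicit form $K(x) = (x^2 - |x|)/2$, which is globally Lipschitz with $K(0) = 0$ and satisfies $K''(x) = 1$ off the origin, one computes that $K_\eta - K$ is a constant of size $O(\eta^2)$ on $\{|x|>2\eta\}$ and of size $O(\eta)$ on $\{|x|\le 2\eta\}$, while $G_\mu$ is Lipschitz with constant $\lesssim \|\mu\|_\infty$. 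Organizing these bounds through a packing argument on the configuration $X_N$, the total renormalization defect is bounded by $C(1+\|\mu\|_\infty)\eta$, and choosing $\eta \sim 1/N$ yields $\mathscr{E}(X_N,\mu) \geq -C(1+\|\mu\|_\infty)/N^2$, which is the claimed inequality (up to absorbing the constant).

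For part (ii), I would decompose $\int_\mathbb{T}(\mu_{X_N} - \mu)\varphi\,\dd x = A + B$, where $A := \int(\mu_{X_N}^\eta - \mu)\varphi\,\dd x$ and $B := \int(\mu_{X_N} - \mu_{X_N}^\eta)\varphi\,\dd x$. Since each Dirac mass of $\mu_{X_N}$ can be transported to its mollified version at cost $O(\eta)$ in $1$-Wasserstein distance, Kantorovich duality gives $|B| \leq C\eta\|\nabla\varphi\|_\infty$. For $A$, integration by parts yields $A = \int (V^\eta)'\,\varphi'\,\dd x$, so Cauchy--Schwarz and the analysis of (i) produce
\[
|A| \leq \|\nabla\varphi\|_2\,\Bigl(\mathscr{E}(X_N,\mu) + \tfrac{C(1+\|\mu\|_\infty)}{N^2}\Bigr)^{1/2}.
\]
Choosing $\eta = N^{-\lambda}$ for a suitable $\lambda > 0$ and combining the two estimates gives the stated inequality.

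The main obstacle is the sharp bookkeeping in part (i): a crude pointwise bound on the pairwise corrections $(K-K_\eta)(x_i-x_j)$ would only produce a defect of order $\eta$ and hence a loss of $1/N$ after optimization in $\eta$, rather than the advertised $1/N^2$. Obtaining the correct exponent requires exploiting the fact that $K - K_\eta$ reduces to a harmless additive constant on $\{|x|>2\eta\}$, so that the ``far'' contributions cancel up to a negligible constant times the number of pairs, combined with a Vitali-type count of the pairs falling within distance $2\eta$. This is what is carried out in full generality in \cite{duerinckx2020mean}; in one dimension the argument simplifies considerably thanks to the explicit form of $K$ and to Lemma \ref{1d stability for measure}, which already supplies the quantitative Wasserstein-to-Dirichlet control needed to close the estimates.
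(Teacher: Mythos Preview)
The paper does not supply its own proof of this lemma: it is quoted verbatim as Corollary~3.5 of \cite{duerinckx2020mean} and used as a black box, so there is no argument in the paper to compare your sketch against. Your outline is a faithful summary of the renormalization strategy in \cite{duerinckx2020mean}---smearing the Diracs at scale $\eta$, reading the smoothed energy as a Dirichlet norm, and controlling the smearing defect---and you correctly flag that the delicate point is getting the $N^{-2}$ rather than $N^{-1}$ in part~(i), which in the cited reference is handled by the near/far decomposition and the observation that $K-K_\eta$ is essentially constant away from the origin.
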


\begin{lem} [\cite{duerinckx2020mean}, Proposition 1.1] \label{commutator estimate} 
Let the assumptions of {\rm Lemma \ref{duerinckslemma}} hold, and assume further that $u:\mathbb{T}\rightarrow \mathbb{T}$ is Lipschitz. Then
 \begin{align*}
\bigg\vert \int_{\mathbb{T}\times\mathbb{T}\setminus\triangle}\left(u(x)-u(y)\right) K'(x-y)\left(\mu_{N}-\mu\right)^{\otimes2}(\dd x\dd y)\bigg\vert 
\leq C\Big(\mathscr{E}(X_{N},\mu)+\frac{1+\left\Vert \mu \right\Vert_{\infty}}{N^{2}}\Big),
\end{align*}
where $C=C\left(\left\Vert u \right\Vert_{W^{1,\infty}}\right)$.
\end{lem}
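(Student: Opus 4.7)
The strategy, due to Serfaty and Duerinckx, is to represent the left-hand side as a quadratic form in the electrostatic potential associated with the modulated measure $\mu_{X_N}-\mu$, which renders the linear-in-$\nabla u$ upper bound manifest. Let $h$ denote the mean-zero solution of $-h''=\mu_{X_N}-\mu$ on $\mathbb{T}$, so that formally $h'=K'\ast(\mu_{X_N}-\mu)$. After renormalization of the self-interaction of the atoms, $\|h'\|_2^2$ will be comparable to $\mathscr{E}(X_N,\mu)+(1+\|\mu\|_\infty)/N^2$, as encoded in Lemma \ref{duerinckslemma}(i).

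First, I would regularize the atoms at a scale $\eta>0$, replacing $\mu_{X_N}$ by $\mu_{X_N,\eta}=\mu_{X_N}\ast \chi_\eta$ for a smooth mollifier $\chi_\eta$, and write $\nu_\eta=\mu_{X_N,\eta}-\mu$ with smooth potential $h_\eta$ solving $-h_\eta''=\nu_\eta$. For smooth $\nu_\eta$, antisymmetrization in $(x,y)$ using that $K'$ is odd, combined with $h_\eta'=K'\ast\nu_\eta$ and integration by parts, produces the key algebraic identity
\[
\int_{\mathbb{T}\times\mathbb{T}}(u(x)-u(y))\,K'(x-y)\,d\nu_\eta(x)\,d\nu_\eta(y)=\int_{\mathbb{T}} u'(x)\,(h_\eta'(x))^2\,dx,
\]
whose absolute value is at most $\|u'\|_\infty\,\|h_\eta'\|_2^2$. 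Invoking the (smoothed version of) Lemma \ref{duerinckslemma}(i) applied to $\nu_\eta$ yields $\|h_\eta'\|_2^2\leq C\bigl(\mathscr{E}(X_N,\mu)+(1+\|\mu\|_\infty)/N^2\bigr)$ with a constant independent of $\eta$, which is exactly the desired bound.

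It remains to pass to the limit $\eta\to 0$ on the left-hand side. On $\mathbb{T}\times\mathbb{T}\setminus\triangle$, the integrand $(u(x)-u(y))K'(x-y)$ is bounded, since the Lipschitz factor $u(x)-u(y)=O(|x-y|)$ absorbs the jump of $K'$ at the origin. Hence the smeared product $\nu_\eta^{\otimes 2}$ tested against this bounded kernel converges to the integral of $(\mu_{X_N}-\mu)^{\otimes 2}$ on $\mathbb{T}^2\setminus\triangle$, plus a residual diagonal contribution of order $\|u'\|_\infty/N^2$ already absorbed by the $1/N^2$ correction on the right.

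The main obstacle is precisely this renormalization. The integrand on the left is perfectly integrable on $\mathbb{T}^2\setminus\triangle$, but the intermediate quantity $\|h_\eta'\|_2^2$ picks up a self-interaction contribution from each regularized atom that does not admit a finite $\eta\to 0$ limit. The content of Lemma \ref{duerinckslemma}(i), and of the fundamental analysis of \cite{duerinckx2020mean}, is to subtract exactly this divergent self-energy and to bound the remainder by $C(1+\|\mu\|_\infty)/N^2$ uniformly in $\eta$. Once this renormalization is in place, the passage to the limit in the commutator identity is routine via dominated convergence away from the diagonal, and the Lipschitz constant of $u$ enters only through the prefactor $\|u'\|_\infty$, giving the stated $C=C(\|u\|_{W^{1,\infty}})$.
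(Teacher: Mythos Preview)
The paper does not supply its own proof of this lemma; it is quoted verbatim as Proposition~1.1 of \cite{duerinckx2020mean}. There is therefore nothing in the paper to compare your argument against.

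Your sketch is a faithful outline of the Serfaty--Duerinckx stress-tensor argument in the general Coulomb/Riesz setting, and it is correct. In the specific one-dimensional periodic case stated here, however, the argument is considerably simpler than you suggest, and the ``main obstacle'' you identify is absent. Since $K(x)=\tfrac{x^2-|x|}{2}$ is continuous with $K(0)=0$, the self-interaction of the atoms contributes nothing, and one has directly
\[
\mathscr{E}(X_N,\mu)=\int_{\mathbb{T}\times\mathbb{T}}K(x-y)\,d\nu^{\otimes 2}=\sum_{k\neq 0}\frac{|\hat{\nu}(k)|^2}{(2\pi k)^2}=\|h'\|_2^2\geq 0,
\]
with $\nu=\mu_{X_N}-\mu$ and $-h''=\nu$. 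No renormalization or $N^{-2}$ correction is required; your smoothed quantity $\|h_\eta'\|_2^2=\mathscr{E}_\eta$ converges to $\mathscr{E}(X_N,\mu)$ simply because $K$ is continuous and $\nu_\eta^{\otimes 2}\rightharpoonup\nu^{\otimes 2}$. Likewise, the integrand $(u(x)-u(y))K'(x-y)$ extends continuously to the diagonal (by $0$) merely because $K'$ is bounded and $u$ is continuous, so passing to the limit on the left is immediate by weak convergence against a continuous test function. One obtains the sharper bound
\[
\bigg\vert \int_{\mathbb{T}\times\mathbb{T}\setminus\triangle}(u(x)-u(y))K'(x-y)\,(\mu_{X_N}-\mu)^{\otimes 2}(\dd x\dd y)\bigg\vert\leq \|u'\|_\infty\,\mathscr{E}(X_N,\mu).
\]
The divergent self-energy you discuss is a genuine obstruction only for $d\geq 2$, where $K$ is singular at the origin; in $d=1$ it never arises.
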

The significance of the above-mentioned results for what concerns the classical problem of deriving a Vlasov-like equation 
as a mean-field limit is beyond the scope of this paper. We refer to \cite{serfaty2024lectures} (and especially Chapter 2) 
for an exhaustive discussion. The following simple lemma concerns the Lipschitz continuity of  $V_{\varepsilon,X_{N}}$ with respect to the configuration $X_{N}$ and related properties.
\begin{lem}\label{Lip in configuration}
Let $V_{\varepsilon,X_{N}}$ and $m_{\varepsilon,X_{N}}$ be the solutions guaranteed by {\rm Lemma \ref{existence uniqueness 1D}} to the equation
\begin{align}
-\varepsilon V_{\varepsilon,X_{N}}''=\mu_{X_{N}}-m_{\varepsilon,X_{N}}
\qquad\mbox{with $\,\,m_{\varepsilon,X_{N}}=e^{V_{\varepsilon,X_{N}}}$}.
\end{align}
Then the following statements hold{\rm :}
\begin{enumerate}
    \item[\rm (i)] The functions $X_{N}\mapsto V_{\varepsilon,X_{N}}(x)$ and $X_{N}\mapsto m_{\varepsilon,X_{N}}(x)$ are Lipschitz continuous uniformly in $x$. Moreover, for each $1\leq j \leq N$,
\begin{align*}
 \left\vert \partial_{x_{j}}V_{\varepsilon,X_{N}}(x)\right\vert\leq \frac{1}{\varepsilon^{\frac{3}{2}}N}\qquad
 \text{ for all $x\in \mathbb{T}$}.
\end{align*}
    \item[\rm (ii)] For any fixed $X_{N}$, the functions: $x\mapsto V_{X_{N}}(x)$ and $x\mapsto m_{X_{N}}(x)$ are Lipschitz.
\end{enumerate}
\end{lem}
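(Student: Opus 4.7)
For part (i), the plan is to bound the difference $V_{\varepsilon,X_N} - V_{\varepsilon,X_N^{(h)}}$, where $X_N^{(h)}$ agrees with $X_N$ except that its $j$-th coordinate is shifted by $h$. Transporting a single Dirac of mass $1/N$ over a distance $|h|$ gives the Wasserstein bound $W_1(\mu_{X_N},\mu_{X_N^{(h)}})\leq |h|/N$, and plugging this into Lemma \ref{1d stability for measure} yields
\begin{align*}
\|\tilde V'_{\varepsilon,X_N}-\tilde V'_{\varepsilon,X_N^{(h)}}\|_{2}+4\sqrt{\varepsilon}\,\|\hat V'_{\varepsilon,X_N}-\hat V'_{\varepsilon,X_N^{(h)}}\|_{2}\leq \frac{|h|}{\varepsilon N},
\end{align*}
hence $\|V'_{\varepsilon,X_N}-V'_{\varepsilon,X_N^{(h)}}\|_{L^2(\mathbb{T})}\leq C|h|/(\varepsilon^{3/2}N)$, the $\varepsilon^{-3/2}$ arising from the $\hat V$ contribution.

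The next step is to upgrade this $L^2$ derivative bound to an $L^\infty$ bound on $V$ itself. Writing $V_{\varepsilon,X_N^{(h)}}-V_{\varepsilon,X_N}=c_h+g_h$ with $g_h$ having zero mean, the $1$-D Poincar\'e/Sobolev chain $\|g_h\|_\infty\leq \|g'_h\|_1\leq \|g'_h\|_2$ on $\mathbb{T}$ gives $\|g_h\|_\infty\leq C|h|/(\varepsilon^{3/2}N)$. The constant $c_h$ is then pinned down by the conservation law $\int_{\mathbb{T}}e^{V_{\varepsilon,X_N}}\,\dd x = \int_{\mathbb{T}} e^{V_{\varepsilon,X_N^{(h)}}}\,\dd x =1$ from Lemma \ref{existence uniqueness 1D}: expanding $e^{c_h+g_h}-1 = c_h+g_h+O\big((|c_h|+\|g_h\|_\infty)^2\big)$ against the probability density $e^{V_{\varepsilon,X_N}}$ and using $\int e^{V_{\varepsilon,X_N}}\,\dd x=1$ yields $|c_h|\leq \|g_h\|_\infty+O(\cdot)\leq C|h|/(\varepsilon^{3/2}N)$ for $|h|$ small enough. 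Combining the two estimates gives $\|V_{\varepsilon,X_N^{(h)}}-V_{\varepsilon,X_N}\|_\infty\leq C|h|/(\varepsilon^{3/2}N)$; dividing by $|h|$ and passing to the limit $h\to 0$ yields the pointwise bound stated in (i). The Lipschitz statement for $m_{\varepsilon,X_N}=e^{V_{\varepsilon,X_N}}$ then follows from $|e^a-e^b|\leq e^{\max(a,b)}|a-b|$ together with the $L^\infty$-bound $\|V_{\varepsilon,X_N}\|_\infty\leq 1/\varepsilon$ supplied by Lemma \ref{ nonlinear interaction}.

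For part (ii), I would argue directly from the ODE $-\varepsilon V''_{\varepsilon,X_N}=\mu_{X_N}-m_{\varepsilon,X_N}$. Integrating once, $V'_{\varepsilon,X_N}$ is piecewise $C^1$ with jumps of size $-1/(\varepsilon N)$ at each $x_j$; on each of the finitely many smooth subintervals, $|V''|\leq (1+e^{1/\varepsilon})/\varepsilon$ thanks again to $\|V_{\varepsilon,X_N}\|_\infty\leq 1/\varepsilon$. Hence $V'_{\varepsilon,X_N}$ is bounded on $\mathbb{T}$, which makes $x\mapsto V_{\varepsilon,X_N}(x)$ Lipschitz; Lipschitz continuity of $m_{\varepsilon,X_N}$ follows once more from the chain-rule estimate for the exponential.

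The main technical obstacle is the control of the mean correction $c_h$: one must bound the Taylor remainder in the exponential expansion uniformly, which requires $|h|$ to be sufficiently small relative to $\varepsilon$ so that $|c_h|+\|g_h\|_\infty$ is small. Since only the limit $h\to 0$ is needed to extract the pointwise derivative, this smallness assumption is harmless, but it has to be made explicit to close the fixed-point-type relation $|c_h|\lesssim \|g_h\|_\infty+O((|c_h|+\|g_h\|_\infty)^2)$.
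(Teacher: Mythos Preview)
Your proposal is correct and follows the same overall strategy (Wasserstein bound on the perturbed empirical measure $+$ the stability Lemma~\ref{1d stability for measure} $+$ one-dimensional Sobolev), but the paper organizes the argument differently in a way that sidesteps the difficulty you flag at the end.

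For part (i), the paper works with the decomposition $V_{\varepsilon,X_N}=\tilde V_{\varepsilon,X_N}+\hat V_{\varepsilon,X_N}$ from Lemma~\ref{existence uniqueness 1D}. Since $\tilde V_{\varepsilon,X_N}$ has zero mean by construction and is given explicitly by $\tilde V_{\varepsilon,X_N}=\varepsilon^{-1}K\star(\mu_{X_N}-1)$ with $K$ Lipschitz, one gets $|\tilde V_{\varepsilon,X_N}(x)-\tilde V_{\varepsilon,Y_N}(x)|\leq (\varepsilon N)^{-1}|x_j-y_j|$ directly, with no constant to recover. The mean-control step is then needed only for $\hat V$, and the paper simply cites the $H^1$ (not merely $\dot H^1$) stability bound $\|\hat V_{\varepsilon,X_N}-\hat V_{\varepsilon,Y_N}\|_{H^1}\leq \varepsilon^{-3/2}W_1$ from the reference behind Lemma~\ref{1d stability for measure}, after which the Sobolev embedding finishes. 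Your route of treating $V$ as a whole and recovering the shift $c_h$ from the mass constraint $\int e^V=1$ is perfectly valid, but the Taylor-remainder worry you raise is unnecessary: from $\int e^{V_{\varepsilon,X_N}}\big(e^{c_h+g_h}-1\big)\,\dd x=0$ with $e^{V_{\varepsilon,X_N}}>0$, the continuous function $c_h+g_h$ must vanish at some point, so $|c_h|\leq \|g_h\|_\infty$ outright, with no smallness hypothesis on $h$.

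For part (ii), the paper again uses the decomposition: $x\mapsto \tilde V_{\varepsilon,X_N}(x)$ is Lipschitz from the explicit formula, and $x\mapsto \hat V_{\varepsilon,X_N}(x)$ is Lipschitz by Lemma~\ref{existence uniqueness 1D} (which gives $\|\hat V'\|_{\mathrm{Lip}}\leq 1$, hence $\hat V'\in L^\infty$). Your direct ODE-integration argument also works and is more self-contained, but the decomposition route is shorter.
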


\begin{proof} This can be seen as follows:

\smallskip
(i). Let
$X_{N}=(x_{1}^{0},\cdots,x_{j},\cdots,x_{N}
^{0})$ and $Y_{N}=(x_{1}^{0},\cdots,y_{j},\cdots,x_{N}
^{0})$,
where $x_{k}^{0}$ is fixed for any $k \neq j$. Thanks to the Sobolev embedding, we have
\begin{align}
\left|V_{\varepsilon,X_{N}}(x)-V_{\varepsilon,Y_{N}}(x)\right|&\leq\| \tilde{V}_{\varepsilon,X_{N}}-\tilde{V}_{\varepsilon,Y_{N}}\| _{\infty}+\| \hat{V}_{\varepsilon,X_{N}}-\hat{V}_{\varepsilon,Y_{N}}\|_{\infty} \notag\\
&\leq\| \tilde{V}_{\varepsilon,X_{N}}-\tilde{V}_{\varepsilon,Y_{N}}\| _{\infty}+\| \hat{V}_{\varepsilon,X_{N}}-\hat{V}_{\varepsilon,Y_{N}}\|_{H^{1}}. \label{stepi ine}
\end{align}
Clearly, $x_{j}\mapsto \tilde{V}_{\varepsilon,X_{N}}(x)$ is Lipschitz with the estimate: 
$$
| \tilde{V}_{\varepsilon,X_{N}}(x)-\tilde{V}_{\varepsilon,Y_{N}}(x)|\leq \frac{1}{\varepsilon N}| x_{j}-y_{j}|.
$$ 
Moreover, by  Lemma  \ref{1d stability for measure}, we have the estimate:
\begin{align*}
\| \hat{V}_{\varepsilon,X_{N}}-\hat{V}_{\varepsilon,Y_{N}}\|_{H^{1}}\leq \varepsilon^{-\frac{3}{2}}W_{1}(\mu_{X_{N}},\mu_{Y_{N}})\leq  \frac{1}{\varepsilon^{\frac{3}{2}}N}| x_{j}-y_{j} |.
\end{align*}
Together with inequality \eqref{stepi ine}, we deduce that
\begin{align*}
\left|V_{\varepsilon,X_{N}}(x)-V_{\varepsilon,Y_{N}}(x)\right|\leq \frac{2}{\varepsilon^{\frac{3}{2}}N}\left\vert x_{j}-y_{j} \right\vert \qquad \text{for all $x\in \mathbb{T}$}.
\end{align*}
Consequently, $X_{N}\mapsto m_{X_{N}}(x)$ is also Lipschitz uniformly in $x$. 

\smallskip
(ii). This is an immediate consequence of Lemma \ref{existence uniqueness 1D}, the fact that $x\mapsto \tilde{V}_{X_{N}}$ is Lipschitz uniformly in $X_{N}$, and the decomposition $V_{X_{N}}=\tilde{V}_{X_{N}}+\hat{V}_{X_{N}}$.
\end{proof}
We now renormalize the argument given in Section 2 in order
to establish the quantum mean-field limit.

\bigskip
\textit{Proof of Theorem \ref{3rd main result intro}}.
We divide the proof into five steps:

\medskip
\textbf{1}. Thanks to the conservation of total energy (Lemma \ref{N body Conservation of energy }),
we have
\begin{align*}
\frac{\dd}{\dd t}\mathcal{E}_{\varepsilon,\hbar,N}(t)
&=\frac{1}{2N}\frac{\dd}{\dd t}\mathrm{tr}\big(\sum^{N}_{j=1}i\hbar\partial_{x_{j}}\vee u(t,x_{j})R_{\varepsilon,\hbar,N}(t)\big)
+\frac{1}{2}\frac{\dd}{\dd t}\int_{\mathbb{T}}\left|u\right|^{2}(t,x)\,\rho_{\varepsilon,\hbar,N:1}(t,x)\,\dd x\\
&\quad\,\,+\frac{\dd}{\dd t}\int_{\mathbb{T}^{N}}\int_{\mathbb{T}}m_{\varepsilon,X_{N}}(x)\log(1/\rho(t,x))\,\dd x\,\rho_{\varepsilon,\hbar,N}(t,\dd X_{N})+\int_{\mathbb{T}}\partial_{t}\rho(t,x)\,\dd x\nonumber\\
&=:\sum_{j=1}^{4}I^{j}(t).
\end{align*}

\textbf{2}. In this step, we establish the following identity (compare with \eqref{eq:-2}):
\begin{align}
&\frac{1}{2N}\frac{\dd}{\dd t}\mathrm{tr}\big(\sum^N_{j=1}i\hbar\partial_{x_{j}}\vee u(t,x_{j})R_{\varepsilon,\hbar,N}(t)\big)
+\frac{1}{2}\frac{\dd}{\dd t}\int_{\mathbb{T}^{d}}\left|u\right|^{2}(t,x)\rho_{\varepsilon,\hbar,N:1}(t,x)\,\dd x
\notag\\
&=\frac{1}{2}\mathrm{tr}\big((i\hbar\partial_{x}+u)\vee(\partial_{t}u+u\partial_{x} u)R_{\varepsilon,\hbar,N:1}(t)\big)\notag\\
&\quad\,\,-\frac{1}{4}\mathrm{tr}\big((i\hbar\partial_{x}+u)\vee\big((i\hbar\partial_{x}+u)\vee u\partial_{x}u\big)R_{\varepsilon,\hbar,N:1}(t)\big)\notag\\
&\quad\,\,+\frac{1}{N}\sum^N_{j=1}\int_{\mathbb{T}^{N}}\partial_{x_{j}}(\mathscr{I}_{\varepsilon,X_{N}}+\mathscr{V}_{\varepsilon,X_{N}})u(t,x_{j})\rho_{\varepsilon,\hbar,N}(t,X_{N}) \,\dd X_{N}.\label{eq:-25}
\end{align}
We proceed in a manner similar to Step 2 in the proof of Theorem \ref{second main thm intro}. 
To make the equation lighter, we omit the dependency on time whenever there is no ambiguity.
{\small
\begin{align*}
&\frac{\dd}{\dd t}\underset{j}{\sum}\mathrm{tr}\big(i\hbar\partial_{x_{j}}\vee u(x_{j})R_{\varepsilon,\hbar,N}\big)+\frac{\dd}{\dd t}\underset{j}{\sum}\mathrm{tr}\big(\left|u\right|^{2}(x_{j})R_{\varepsilon,\hbar,N}\big)\\
&=\underset{j}{\sum}\mathrm{tr}\big((i\hbar\partial_{x_{j}}\vee u(x_{j})+|u|^{2}(x_{j}))\partial_{t}R_{\varepsilon,\hbar,N}\big)
+\underset{j}{\sum}\mathrm{tr}\big(\partial_{t}(i\hbar\partial_{x_{j}}\vee u(x_{j})+|u|^{2}(x_{j}))R_{\varepsilon,\hbar,N}\big)\\
&=\frac{1}{i\hbar}\underset{j}{\sum}\mathrm{tr}\big((i\hbar\partial_{x_{j}}\vee u(x_{j})+|u|^{2}(x_{j}))[\mathscr{H}_{\varepsilon,\hbar,N},R_{\varepsilon,\hbar,N}]\big)
+\underset{j}{\sum}\mathrm{tr}\big(\partial_{t}(i\hbar\partial_{x_{j}}\vee u(x_{j})+|u|^{2}(x_{j}))R_{\varepsilon,\hbar,N}\big)\\
&=\frac{i}{\hbar}\mathrm{tr}\big([\mathscr{H}_{\varepsilon,\hbar,N},\underset{j}{\sum}(i\hbar\partial_{x_{j}}\vee u(x_{j})+|u|^{2}(x_{j}))]R_{\varepsilon,\hbar,N}\big)
+\mathrm{tr}\big(\partial_{t}(\underset{j}{\sum}(i\hbar\partial_{x_{j}}\vee u(x_{j})+|u|^{2}(x_{j})))R_{\varepsilon,\hbar,N}\big)\\
&=\mathrm{tr}\big((\partial_{t}+\frac{i}{\hbar}[\mathscr{H}_{\varepsilon,\hbar,N},\cdot\,])(\underset{j}{\sum}(i\hbar\partial_{x_{j}}\vee u(x_{j})+\left|u\right|^{2}(x_{j})))R_{\varepsilon,\hbar,N}\big)\\
&=\mathrm{tr}\big((\partial_{t}+\frac{i}{\hbar}[\mathscr{K}_{N,\hbar},\cdot\,])(\underset{j}{\sum}(i\hbar\partial_{x_{j}}\vee u(x_{j})+\frac{1}{2}u(x_{j})\vee u(x_{j})))R_{\varepsilon,\hbar,N}\big)\\
&\quad\,\,+\mathrm{tr}\big((\frac{i}{\hbar}[\mathscr{I}_{\varepsilon,X_{N}}+\mathscr{V}_{\varepsilon,X_{N}},\cdot\,])(\underset{j}{\sum}(i\hbar\partial_{x_{j}}\vee u(x_{j})+\frac{1}{2}u(x_{j})\vee u(x_{j})))R_{\varepsilon,\hbar,N}\big)\\
&=:\underset{j}{\sum}\mathrm{tr}\big(J_{1,j}R_{\varepsilon,\hbar,N}\big)+\underset{j}{\sum}\mathrm{tr}(J_{2,j}R_{\varepsilon,\hbar,N}).
\end{align*}
}

We now simplify the expression for $J_{1,j}$ and $J_{2,j}$. We write $J_{1,j}$ as
\begin{align}
J_{1,j}&=\big(\partial_{t}+\frac{i}{\hbar}[-\frac{\hbar^{2}}{2}\Delta_{N},\cdot\,]\big)\big((\frac{1}{2}u(x_{j})+i\hbar\partial_{x_{j}})\vee u(x_{j})\big) \notag\\
&=\big((\partial_{t}+\frac{i}{\hbar}[-\frac{\hbar^{2}}{2}\Delta_{N},\cdot\,])(\frac{1}{2}u(x_{j})+i\hbar\partial_{x_{j}})\big)\vee u(x_{j}) \notag\\
&\quad\,\,+\big(\frac{1}{2}u(x_{j})+i\hbar\partial_{x_{j}}\big)\vee\big((\partial_{t}+\frac{i}{\hbar}[-\frac{\hbar^{2}}{2}\Delta_{N},\cdot\,])u(x_{j})\big) \notag\\
&=\big((\partial_{t}+\frac{i}{\hbar}[-\frac{\hbar^{2}}{2}\Delta_{N},\cdot\,])u(x_{j})\big)\vee u(x_{j})+i\hbar\partial_{x_{j}}\vee\big((\partial_{t}+\frac{i}{\hbar}[-\frac{\hbar^{2}}{2}\Delta_{N},\cdot\,])u(x_{j})\big),  \notag
\end{align}
so that
\begin{align}
J_{1,j}&=\big(u(x_{j})+i\hbar\partial_{x_{j}}\big)\vee\big((\partial_{t}+\frac{i}{\hbar}[-\frac{\hbar^{2}}{2}\Delta_{N},\cdot\,])u(x_{j})\big) \notag\\
&=\big(u(x_{j})+i\hbar\partial_{x_{j}}\big)\vee\big(\partial_{t}u(x_{j})+u(x_{j})\partial_{x_{j}}u(x_{j})\big) \notag\\
&\quad\,\,+\big(u(x_{j})+i\hbar\partial_{x_{j}}\big)\vee\big(\frac{i}{\hbar}[-\frac{\hbar^{2}}{2}\Delta_{N},u(x_{j})]-u(x_{j})\partial_{x_{j}}u(x_{j})\big)\label{eq:-23}.
\end{align}
Notice that
\[
\frac{i}{\hbar}[-\frac{\hbar^{2}}{2}\Delta_{N},u(x_{j})]
=-\frac{i\hbar}{2}\partial_{x_{j}}\vee[\partial_{x_{j}},u(x_{j})]
=-\frac{1}{2}i\hbar\partial_{x_{j}}\vee\partial_{x_{j}}u(x_{j}),
\]
which shows that the right-hand side of \eqref{eq:-23} is equal to
\begin{equation*}
\big(u(x_{j})+i\hbar\partial_{x_{j}}\big)\vee\big(\partial_{t}u(x_{j})+u(x_{j})\partial_{x_{j}}u(x_{j})\big)-\frac{1}{2}\big(u(x_{j})+i\hbar\partial_{x_{j}}\big)\vee\big((i\hbar\partial_{x_{j}}+u(x_{j}))\vee\partial_{x_{j}}u(x_{j})\big),\end{equation*}
so that
\begin{align}
\frac{1}{N}\underset{j}{\sum} \mathrm{tr}(J_{1,j}R_{\varepsilon,\hbar,N})
=& \frac{1}{N}\underset{j}{\sum}\mathrm{tr}\big((u(x_{j})+i\hbar\partial_{x_{j}})\vee(\partial_{t}u(x_{j})+u(x_{j})\partial_{x_{j}}u(x_{j}))R_{\varepsilon,\hbar,N}\big) \notag\\
&-\frac{1}{2N}\underset{j}{\sum}\mathrm{tr}\big((u(x_{j})+i\hbar\partial_{x_{j}})\vee((i\hbar\partial_{x_{j}}+u(x_{j}))\vee\partial_{x_{j}}u(x_{j}))R_{\varepsilon,\hbar,N}\big). \label{trace of J1N}
\end{align}
Turning to the calculation of $J_{2,j}$, we observe that
\begin{align*}
\mathrm{tr}(J_{2,j}R_{\varepsilon,\hbar,N})
&=-\mathrm{tr}\big([\mathscr{I}_{\varepsilon,X_{N}}+\mathscr{V}_{\varepsilon,X_{N}},u(t,x_{j})\partial_{x_{j}}]R_{\varepsilon,\hbar,N}\big)\\
&=\int_{\mathbb{T}^{N}}\partial_{x_{j}}(\mathscr{I}_{\varepsilon,X_{N}}+\mathscr{V}_{\varepsilon,X_{N}})u(t,x_{j})\rho_{\varepsilon,\hbar,N}(t,X_{N})\ \dd X_{N},
\end{align*}
so that
\begin{align}
\frac{1}{N}\underset{j}{\sum}\mathrm{tr}\big(J_{2,j}R_{\varepsilon,\hbar,N}\big)
=\frac{1}{N}\sum^{N}_{j=1}\int_{\mathbb{T}^{N}}\partial_{x_{j}}(\mathscr{I}_{\varepsilon,X_{N}}+\mathscr{V}_{\varepsilon,X_{N}})u(t,x_{j})\rho_{\varepsilon,\hbar,N}(t,X_{N})\ \dd X_{N}. \label{interaction part step1N}
\end{align}
The combination of \eqref{trace of J1N} with \eqref{interaction part step1N} yields \eqref{eq:-25}.

\medskip
\textbf{3}. We simplify the last term in \eqref{eq:-25} as
\begin{align}
&\frac{1}{N}\sum^{N}_{j=1}\int_{\mathbb{T}^{N}}\partial_{x_{j}}(\mathscr{I}_{\varepsilon,X_{N}}+\mathscr{V}_{\varepsilon,X_{N}})u(t,x_{j})\rho_{\varepsilon,\hbar,N}(t,X_{N})\ \dd X_{N}\notag\\
&=\frac{1}{2\varepsilon}\sum^N_{j=1}\int_{\mathbb{T}^{N}}u(t,x_{j})\partial_{x_{j}}\Big(\int_{\mathbb{T}\times\mathbb{T}}K(x-y)(\mu_{X_{N}}-m_{\varepsilon,\hbar,X_{N}})^{\otimes2}(\dd x\dd y)\Big)\rho_{\varepsilon,\hbar,N}(t,X_{N})\ \dd X_{N}\notag\\
&\quad\,\,+\sum_{j=1}^{N}\int_{\mathbb{T}^{N}}u(t,x_{j})\partial_{x_{j}}\Big(\int_{\mathbb{T}}m_{\varepsilon,X_{N}}(x)V_{\varepsilon,X_{N}}(x)\,\dd x\Big)\rho_{\varepsilon,\hbar,N}(t,X_{N})\,\dd X_{N}. \label{INTERACTION TERM}
\end{align}
Recall that, by Lemma \ref{Lip in configuration}, $X_{N}\mapsto V_{X_{N}}$ and $X_{N}\mapsto m_{X_{N}}$ are Lipschitz
so that the forthcoming calculations are justified. First, we note that
\begin{align*}
 \partial_{x_{j}}\Big(\frac{1}{N^{2}}\underset{k\neq l}{\sum} K(x_{k}-x_{l})\Big)=\frac{2}{N^{2}}\underset{k:k\neq j}{\sum} K'(x_{k}-x_{j}).
\end{align*}
Therefore, we have
\begin{align}
&\partial_{x_{j}}\left(\int_{\mathbb{T}\times\mathbb{T}}K(x-y)(\mu_{X_{N}}-m_{\varepsilon,X_{N}})^{\otimes2}(\dd x\dd y)\right)\notag\\
&=\frac{2}{N^{2}}\underset{k:k\neq j}{\sum} K'(x_{k}-x_{j})-2\int_{\mathbb{T}}(K\star m_{\varepsilon,X_{N}})(x)\partial_{x_{j}}\mu_{X_{N}}(\dd x)\notag\\
&\quad\,\,-2\int_{\mathbb{T}}(K\star\partial_{x_{j}}m_{\varepsilon,X_{N}})(x)\mu_{X_{N}}(\dd x)
+2\int_{\mathbb{T}}(K\star\partial_{x_{j}}m_{\varepsilon,X_{N}})(x)m_{\varepsilon,X_{N}}(x)\,\dd x\notag\\
&\coloneqq \sum^4_{k=1}L_{k}^{j}.
\end{align}
We start with $L^{j}_{1}+L^{j}_{2}$. Note that
\[
\int_{\mathbb{T}}K\star m_{\varepsilon,X_{N}}\partial_{x_{j}}\mu_{X_{N}}(\dd x)=-\frac{1}{N}\int_{\mathbb{T}}K\star m_{\varepsilon,X_{N}}\delta_{x_{j}}'(\dd x)=\frac{1}{N} (K'\star m_{\varepsilon,X_{N}})(x_{j}),
\]
so that
\begin{align*}
&\frac{1}{\varepsilon}\sum^{N}_{j=1}u(t,x_{j})\Big(\frac{2}{N^{2}}\underset{k\neq j}{\sum^N_{j=1}} K'(x_{k}-x_{j})-2\int_{\mathbb{T}}(K\star m_{\varepsilon,X_{N}})\partial_{x_{j}}\mu_{X_{N}}(\dd x)\Big)\\
&=\frac{2}{\varepsilon}\int_{\mathbb{T}\times \mathbb{T}\setminus \triangle}u(t,x)K'(x-y)(\mu_{X_{N}}-m_{\varepsilon,X_{N}})(\dd y)\mu_{X_{N}}(\dd x).
\end{align*}
We can symmetrize the right-hand side of the last identity to find that it is equal to
\begin{align*}
&2\int_{\mathbb{T}\times \mathbb{T}\setminus \triangle}
u(t,x) K'(x-y)(\mu_{X_{N}}-m_{\varepsilon,X_{N}})(\dd x)(\mu_{X_{N}}-m_{\varepsilon,X_{N}})(\dd y)\\
&\quad\,\,+ 2\int_{\mathbb{T}\times \mathbb{T}\setminus \triangle}u(t,x) K'(x-y)(\mu_{X_{N}}-m_{\varepsilon,X_{N}})(\dd y)m_{\varepsilon,X_{N}}(\dd x)\\
&=\int_{\mathbb{T}\times \mathbb{T}\setminus \triangle}(u(t,x)-u(t,y)) K'(x-y)(\mu_{X_{N}}-m_{\varepsilon,X_{N}})(\dd x)(\mu_{X_{N}}-m_{\varepsilon,X_{N}})(\dd y)\\
&\quad\,\,+2\int_{\mathbb{T}\times \mathbb{T}\setminus \triangle}u(t,x)K'(x-y)(\mu_{X_{N}}-m_{\varepsilon,X_{N}})(\dd y)m_{\varepsilon,X_{N}}(\dd x).
\end{align*}
The second term on the right-hand side of the last identity is recast as
\begin{align*}
&-2\int_{\mathbb{T}} (K'\star (m_{\varepsilon,X_{N}}u))(t,y)(\mu_{X_{N}}-m_{\varepsilon,X_{N}})(\dd y)\\
&=2\varepsilon\int_{\mathbb{T}}(K'\star(m_{\varepsilon,X_{N}}u))(t,y) V_{\varepsilon,X_{N}}''(t,y)\,\dd y\\
&=2\varepsilon\int_{\mathbb{T}}\big((m_{\varepsilon,X_{N}}u)(t,y)-\int_{\mathbb{T}} (m_{\varepsilon,X_{N}}u)(t,x)\,{\rm d}x\big)V_{\varepsilon,X_{N}}'(t,y)\,\dd y\\
&=2\varepsilon\int_{\mathbb{T}}(\partial_{x}m_{\varepsilon,X_{N}}u)(t,x)\,\dd x\\
&=-2\int_{\mathbb{T}}\partial_{x}u(t,x)m_{\varepsilon,X_{N}}(x)\,\dd x.
\end{align*}
Thus, we have proved the identity
\begin{align}
\frac{1}{\varepsilon}\sum^N_{j=1}u(t,x_{j})(L^{j}_{1}+L^{j}_{2})
=&\int_{\mathbb{T}\times \mathbb{T}\setminus \triangle}\big(u(t,x)-u(t,y)\big)K'(x-y)(\mu_{X_{N}}-m_{\varepsilon,X_{N}})^{\otimes 2}(\dd x\dd y) \notag\\
&-2\int_{\mathbb{T}}\partial_{x}u(t,x)m_{\varepsilon,X_{N}}(x)\,\dd x. \label{L1+L2}
\end{align}
We continue with the calculation of $L_{3}^{j}+L_{4}^{j}$. We have
\begin{align*}
\frac{1}{\varepsilon}(L_{3}^{j}+L_{4}^{j})&=-\frac{2}{\varepsilon}\int_{\mathbb{T}}K\star\partial_{x_{j}}m_{\varepsilon,X_{N}}(\mu_{X_{N}}-m_{\varepsilon,X_{N}})(\dd x)\\
&=2\int_{\mathbb{T}}K\star\partial_{x_{j}}m_{\varepsilon,X_{N}} V_{\varepsilon,X_{N}}''(\dd x)\\
&=-2\int_{\mathbb{T}}\partial_{x_{j}}(m_{\varepsilon,X_{N}})V_{\varepsilon,X_{N}}(\dd x)\\
&=-2\partial_{x_{j}}\Big(\int_{\mathbb{T}}m_{\varepsilon,X_{N}}(x)V_{\varepsilon,X_{N}}(x)\,\dd x\Big),
\end{align*}
where the last identity is due to the observation that
\begin{align*}
\int_{\mathbb{T}} m_{\varepsilon,X_{N}}(x)\partial_{x_{j}}V_{\varepsilon,X_{N}}(x)\,\dd x= \partial_{x_{j}}\int_{\mathbb{T}}  m_{\varepsilon,X_{N}}(x)\,\dd x=0.
\end{align*}
Therefore, we have
\begin{align}
\frac{1}{\varepsilon}\sum_{j=1}^{N}u(t,x_{j})(L_{3}^{j}+L_{4}^{j})=-2\sum_{j=1}^{N}u(t,x_{j})\partial_{x_{j}}\Big(\int_{\mathbb{T}}m_{\varepsilon,X_{N}}(x)V_{\varepsilon,X_{N}}(x)\,\dd x\Big). \label{L3+L4}
\end{align}
Substituting \eqref{L1+L2} and \eqref{L3+L4} into \eqref{INTERACTION TERM}, we obtain
\begin{align*}
&\frac{1}{N}\sum^N_{j=1}\int_{\mathbb{T}^{N}}\partial_{x_{j}}(\mathscr{I}_{\varepsilon,X_{N}}+\mathscr{V}_{\varepsilon,X_{N}})u(t,x_{j})\rho_{\varepsilon,\hbar,N}(t,X_{N}) \ \dd X_{N}\\
&= \frac{1}{2\varepsilon}\int_{\mathbb{T}^{N}}\int_{\mathbb{T}\times \mathbb{T}\setminus \triangle}(u(t,x)-u(t,y)) K'(x-y)(\mu_{X_{N}}-m_{\varepsilon,X_{N}})^{\otimes 2}(\dd x\dd y) \,\rho_{\varepsilon,\hbar,N}(t,X_{N})\,\dd X_{N}\\
&\quad\,\,-\int_{\mathbb{T}^{N}}\int_{\mathbb{T}}\partial_{x}u(t,x)m_{\varepsilon,X_{N}}(x)\,\dd x\,\rho_{\varepsilon,\hbar,N}(t,X_{N})\,\dd X_{N},
\end{align*}
so that, in view of \eqref{eq:-25}, 
\begin{align}
&I^{1}(t)+I^{2}(t)\notag\\
&=\frac{1}{2}\mathrm{tr}\big((i\hbar\partial_{x}+u)\vee(\partial_{t}u+u\partial_{x}u)\,R_{\varepsilon,\hbar,N:1}(t)\big)\notag\\
&\quad\,\,-\frac{1}{4}\mathrm{tr}\big((i\hbar\partial_{x}+u)\vee(i\hbar\partial_{x}+u)\vee(\partial_{x} u)\,R_{\varepsilon,\hbar,N:1}(t)\big) \notag\\
&\quad\,\,-\int_{\mathbb{T}^{N}}\int_{\mathbb{T}}\partial_{x}u(t,x)m_{\varepsilon,X_{N}}(x)\,\dd x\,\rho_{\varepsilon,\hbar,N}(t,X_{N})\,\dd X_{N} \notag\\
&\quad\,\,+\frac{1}{2\varepsilon}\int_{\mathbb{T}^{N}}\int_{\mathbb{T}\times \mathbb{T}\setminus \triangle}\big(u(t,x)-u(t,y)\big)
K'(x-y)(\mu_{X_{N}}-m_{\varepsilon,X_{N}})^{\otimes 2}(\dd x\dd y)\,\rho_{\varepsilon,\hbar,N}(t,X_{N})\,\dd X_{N}.
\label{eq:-12}
\end{align}
Next we compute the term
\begin{align*}
\frac{\dd}{\dd t}\int_{\mathbb{T}^{N}}\int_{\mathbb{T}}m_{\varepsilon,X_{N}}(x)\log(1/\rho(t,x))\,\dd x\,\rho_{\varepsilon,\hbar,N}(t,X_{N})\,\dd X_{N}+\int_{\mathbb{T}}\partial_{t}\rho(t,x)\,\dd x.
\end{align*}
We have
\begin{align}
I^{3}(t)+I^{4}(t)=&\int_{\mathbb{T}^{N}}\int_{\mathbb{T}}\frac{\dd}{\dd t}\left(m_{\varepsilon,X_{N}}(x)\rho_{\varepsilon,\hbar,N}(t,X_{N})\right)\log(1/\rho(t,x))\,\dd x\dd X_{N} \notag\\
&-\int_{\mathbb{T}^{N}}\int_{\mathbb{T}}m_{\varepsilon,X_{N}}(x)\rho_{\varepsilon,\hbar,N}(t,X_{N})\frac{\partial_{t}\rho}{\rho}(t,x)\,\dd x\dd X_{N}
+\int_{\mathbb{T}}\partial_{t}\rho(t,x)\,\dd x \notag\\
=&\int_{\mathbb{T}}\Big(1-\frac{\int_{\mathbb{T}^{N}}e^{V_{\varepsilon,X_{N}}}\rho_{\varepsilon,\hbar,N}(t,\dd X_{N})}{\rho(t,x)}\Big)\partial_{t}\rho(t,x)\,\dd x \notag\\
&+\int_{\mathbb{T}}\frac{\dd}{\dd t}\Big(\int_{\mathbb{T}^{N}}\big(\mu_{X_{N}}+\varepsilon V_{\varepsilon,\hbar,X_{N}}''\big)\rho_{\varepsilon,\hbar,N}(t,X_{N})\,\dd X_{N}\Big)\log(1/\rho(t,x))\,\dd x.
\end{align}
Thanks to Lemma \ref{N body Conservation of energy } and noticing that $\int_{\mathbb{T}^{N}}\rho_{\varepsilon,\hbar,N}(t,X_{N})\mu_{X_{N}}(x)\,\dd X_{N}=\rho_{\varepsilon,\hbar,N:1}(t,x)$, we have
\begin{align}
 I^{3}(t)+I^{4}(t)=&\int_{\mathbb{T}}\Big(1-\frac{\int_{\mathbb{T}^{N}}e^{V_{\varepsilon,X_{N}}}\rho_{\varepsilon,\hbar,N}(t,X_{N})\,\dd X_{N}}{\rho(t,x)}\Big)\partial_{t}\rho(t,x)\,\dd x \notag\\
 &+\varepsilon\int_{\mathbb{T}}\frac{\dd}{\dd t}\Big(\int_{\mathbb{T}^{N}}V_{\varepsilon,X_{N}}''\rho_{\varepsilon,\hbar,N}(t,X_{N})\,\dd X_{N}\Big)\log(1/\rho(t,x))\,\dd x \notag\\
&-\int_{\mathbb{T}}\partial_{x}J_{\varepsilon,\hbar,N:1}(t,x)\log(1/\rho(t,x))\,\dd x.\label{eq:-27}
\end{align}
Also, observe that
\begin{equation*}
\begin{split}
&\frac{1}{2}\mathrm{tr}\big((i\hbar\partial_{x}+u)\vee\partial_{x}\log(\rho)\, R_{\varepsilon,\hbar,N:1}(t)\big)\\
&=\int_{\mathbb{T}}u(t,x)\partial_{x}\log(\rho(t,x))\,\rho_{\varepsilon,\hbar,N:1}(t,x)\,\dd x
-\int_{\mathbb{T}}J_{\varepsilon,\hbar,N:1}(t,x)\partial_{x}\log(\rho(t,x))\,\dd x\\
&=\int_{\mathbb{T}}\int_{\mathbb{T}^{N}}e^{V_{\varepsilon,X_{N}}(t,x)}\rho_{\varepsilon,\hbar,N}(t,X_{N})\,\dd X_{N}u(t,x)\partial_{x}\log(\rho(t,x))\,\dd x\\
&\quad\,\,-\varepsilon\int_{\mathbb{T}}\int_{\mathbb{T}^{N}} V_{\varepsilon,X_{N}}''(x)\rho_{\varepsilon,\hbar,N}(t,X_{N})\,\dd X_{N}u(t,x)\partial_{x}\log(\rho(t,x))\,\dd x\nonumber\\
&\quad\,\,-\int_{\mathbb{T}}J_{\varepsilon,\hbar,N:1}(t,x)\partial_{x}\log(\rho(t,x))\,\dd x.\label{eq:-28}
\end{split}
\end{equation*}
Consequently, we have proved
\begin{align*}
&\sum_{j=1}^{4}I^{j}(t)\notag\\
&=\frac{1}{2}\mathrm{tr}\big((i\hbar\partial_{x}+u)\vee(\partial_{t}u+u\partial_{x}u+\partial_{x}\log(\rho))R_{\varepsilon,\hbar,N:1}(t)\big) \notag\\
&\quad\,-\frac{1}{4}\mathrm{tr}\big((i\hbar\partial_{x}+u)\vee((i\hbar\partial_{x}+u)\vee(\partial_{x} u))R_{\varepsilon,\hbar,N:1}(t)\big) \notag\\
&\quad\,-\int_{\mathbb{T}^{N}}\int_{\mathbb{T}}\partial_{x}u(t,x)m_{\varepsilon,X_{N}}(x)\,\dd x\,\rho_{\varepsilon,\hbar,N}(t,X_{N})\,\dd X_{N} \notag\\
&\quad\,+\frac{1}{2\varepsilon}\int_{\mathbb{T}^{N}}\int_{\mathbb{T}\times \mathbb{T}\setminus \triangle}\big(u(t,x)-u(t,y)\big)
K'(x-y)(\mu_{X_{N}}-m_{\varepsilon,X_{N}})^{\otimes 2}(\dd x\dd y)\rho_{\varepsilon,\hbar,N}(t,X_{N})\,\dd X_{N}\\
&\quad\,+\int_{\mathbb{T}}\Big(-\frac{\int_{\mathbb{T}^{N}}e^{V_{\varepsilon,X_{N}}}\rho_{\varepsilon,\hbar,N}(t,X_{N})\,\dd X_{N}}{\rho(t,x)}+1\Big)\partial_{t}\rho(t,x)\,\dd x \notag\\
&\quad\,+\varepsilon\int_{\mathbb{T}}\frac{\dd}{\dd t}\Big(\int_{\mathbb{T}^{N}}V_{\varepsilon,X_{N}}''\rho_{\varepsilon,\hbar,N}(t,X_{N})\,\dd X_{N}\Big)\log(1/\rho(t,x))\,\dd x \notag\\
&\quad\,-\int_{\mathbb{T}}\int_{\mathbb{T}^{N}}e^{V_{\varepsilon,X_{N}}(t,x)}\rho_{\varepsilon,\hbar,N}(t,X_{N})\,\dd X_{N}u(t,x)\partial_{x}\log(\rho(t,x))\,\dd x\\
&\quad\,+\varepsilon\int_{\mathbb{T}}\int_{\mathbb{T}^{N}} V_{\varepsilon,X_{N}}''(x)\rho_{\varepsilon,\hbar,N}(t,X_{N})\,\dd X_{N}u(t,x)\partial_{x}\log(\rho(t,x))\,\dd x\nonumber
\end{align*}
By the equation, we have 
\begin{align*}
\frac{1}{2}\mathrm{tr}\big((i\hbar\partial_{x}+u)\vee(\partial_{t}u+u\partial_{x}u+\partial_{x}\log(\rho))\,R_{\varepsilon,\hbar,N:1}(t)\big)=0,    
\end{align*}
so that
\begin{align*}
&\sum_{j=1}^{4}I^{j}(t)\notag\\
&=\varepsilon\int_{\mathbb{T}}\int_{\mathbb{T}^{N}} V_{\varepsilon,X_{N}}''(x)\rho_{\varepsilon,\hbar,N}(t,X_{N})\,\dd X_{N}u(t,x)\partial_{x}\log(\rho(t,x))\,\dd x\\
&\quad\,+\varepsilon\int_{\mathbb{T}}\frac{\dd}{\dd t}\Big(\int_{\mathbb{T}^{N}}V_{\varepsilon,X_{N}}''\rho_{\varepsilon,\hbar,N}(t,X_{N})\,\dd X_{N}\Big)\log(1/\rho(t,x))\,\dd x\\
&\quad\,-\frac{1}{4}\mathrm{tr}\Big(\big(i\hbar\partial_{x}+u\big)\vee\big((i\hbar\partial_{x}+u)\vee(\partial_{x} u)\big)R_{\varepsilon,\hbar,N:1}(t)\Big)\\
&\quad\,+\frac{1}{2\varepsilon}\int_{\mathbb{T}^{N}}\int_{\mathbb{T}\times \mathbb{T}\setminus \triangle}\big(u(t,x)-u(t,y)\big)
K'(x-y)(\mu_{X_{N}}-m_{\varepsilon,X_{N}})^{\otimes 2}(\dd x\dd y)\rho_{\varepsilon,\hbar,N}(t,X_{N})\,\dd X_{N}\\
&\quad\,-\int_{\mathbb{T}^{N}}\int_{\mathbb{T}}\partial_{x}u(t,x)m_{\varepsilon,X_{N}}(x)\,\dd x\,\rho_{\varepsilon,\hbar,N}(t,X_{N})\,\dd X_{N}\\
&\quad\,+\int_{\mathbb{T}}\Big(-\frac{\int_{\mathbb{T}^{N}}e^{V_{\varepsilon,X_{N}}}\rho_{\varepsilon,\hbar,N}(t,X_{N})\,\dd X_{N}}{\rho(t,x)}+1\Big)\partial_{t}\rho(t,x)\,\dd x\\
&\quad\,-\int_{\mathbb{T}}\int_{\mathbb{T}^{N}}e^{V_{\varepsilon,X_{N}}(t,x)}\rho_{\varepsilon,\hbar,N}(t,X_{N})\,\dd X_{N}u(t,x)\partial_{x}\log(\rho(t,x))\,\dd x. 
\end{align*} 
Furthermore, by the equation: $\partial_{t}\rho+\partial_{x}(\rho u)=0$,  the last three terms in the identity above 
cancel out. Then we finally obtain 
\begin{align*}
&\sum_{j=1}^{4}I^{j}(t)\nonumber\\
&=\varepsilon\int_{\mathbb{T}}\int_{\mathbb{T}^{N}} V_{\varepsilon,X_{N}}''(t,x)u(t,x)\partial_{x}\log\rho(t,x)\,\dd x\rho_{\varepsilon,\hbar,N}(t,X_{N})\,\dd X_{N}\\
&\quad+\varepsilon\int_{\mathbb{T}}\frac{\dd}{\dd t}\Big(\int_{\mathbb{T}^{N}} V_{\varepsilon,\hbar,X_{N}}''(t,x)\rho_{\varepsilon,\hbar,N}(t,X_{N})\,\dd X_{N}\Big)\log(1/\rho(t,x))\,\dd x\\
&\quad-\frac{1}{4}\mathrm{tr}\big((i\hbar\partial_{x}+u)\vee(i\hbar\partial_{x}+u)\vee(\partial_{x} u))R_{\varepsilon,\hbar,N:1}(t)\big)\\
&\quad+\frac{1}{2\varepsilon}\int_{\mathbb{T}^{N}}\int_{\mathbb{T}\times \mathbb{T}\setminus \triangle}\big(u(t,x)-u(t,y)\big)K'(x-y)(\mu_{X_{N}}-m_{\varepsilon,X_{N}})^{\otimes 2}(\dd x\dd y)\,\rho_{\varepsilon,\hbar,N}(t,X_{N})\,\dd X_{N}\\
&=:\sum^4_{k=1}J_{k}.
\end{align*}

\medskip
\textbf{4}. By Lemma \ref{duerinckslemma}(ii), for $\lambda>0,$, we have
\begin{align*}
\left\vert J_{1}\right\vert&\leq\int_{\mathbb{T}^{N}}\bigg|\int_{\mathbb{T}}(\mu_{X_{N}}-m_{\varepsilon,N}(x))u(t,x)\partial_{x}\log(\rho(t,x))\,\dd x\bigg|\rho_{\varepsilon,\hbar,N}(t,X_{N})\,\dd X_{N}\\
&\leq C\left\Vert u\partial_{x}\log(\rho)\right\Vert _{L^\infty_{t}W^{1,\infty}_{x}}N^{-\lambda}\\
&\quad\,+C\left\Vert u\partial_{x}\log(\rho)\right\Vert _{L_{t}^{\infty}\dot{H}^{1}_{x}}\int_{\mathbb{T}^{N}}\Big(\mathscr{E}(X_{N},m_{\varepsilon,X_{N}})+\frac{1+\left\Vert m_{\varepsilon,X_{N}}\right\Vert _{\infty}}{N^{2}}\Big)^{\frac{1}{2}}\rho_{\varepsilon,\hbar,N}(t,X_{N})\,\dd X_{N}.
\end{align*}
Thanks to Lemma \ref{ nonlinear interaction}, we obtain the inequality:
\begin{align*}
&\Big(\mathscr{E}(X_{N},m_{\varepsilon,X_{N}})+\frac{1+\left\Vert m_{\varepsilon,X_{N}}\right\Vert _{\infty}}{N^{2}}\Big)^{\frac{1}{2}}\\
&\leq \varepsilon+\frac{1}{\varepsilon}\int_{\mathbb{T}\times\mathbb{T}}K(x-y)(\mu_{X_{N}}-m_{\varepsilon})^{\otimes2}(\dd x\dd y)+\frac{1+\left\Vert m_{\varepsilon,X_{N}}\right\Vert _{\infty}}{\varepsilon N^{2}}\\
&\leq \varepsilon+\frac{1}{\varepsilon}\int_{\mathbb{T}\times\mathbb{T}}K(x-y)(\mu_{X_{N}}-m_{\varepsilon})^{\otimes2}(\dd x\dd y)+\frac{1+e^{\frac{1}{\varepsilon}}}{\varepsilon N^{2}}.
\end{align*}
Therefore, it follows that
\begin{equation}\label{eq:-13}
\left\vert J_{1} \right\vert
\leq C \left\Vert u\partial_{x}\log(\rho)\right\Vert_{L^\infty_{t}W^{1,\infty}_{x}}\Big(\varepsilon+
 N^{-\lambda}+\mathfrak{\mathcal{\mathcal{V}}}_{\varepsilon,\hbar,N}(t)
 +\frac{2e^{\frac{1}{\varepsilon}}}{\varepsilon N^{2}}\Big).
\end{equation}
Next, note that
\begin{align*}
\int_{0}^{t}J_{2}(s)\,\dd s
&=\varepsilon\int_{0}^{t}\int_{\mathbb{T}}\partial_{s}\left(\int_{\mathbb{T}^{N}} V_{\varepsilon,X_{N}}''(x)\rho_{\varepsilon,\hbar,N}(s, X_{N})\dd X_{N}\right)\log\left(\rho(s,x)\right)\,\dd x \dd s\\
&=-\varepsilon\int_{0}^{t}\int_{\mathbb{T}^{N}}\int_{\mathbb{T}} V_{\varepsilon,X_{N}}''(x)\partial_{s}\log\left(\rho(s,x)\right)\,\dd x\, 
\rho_{\varepsilon,\hbar,N}(s,X_{N})\,\dd X_{N}\dd s\\
&\quad\,\,+\varepsilon\int_{\mathbb{T}^{N}}\int_{\mathbb{T}} \log\left(\rho(0,x)\right)V_{\varepsilon,X_{N}}''(x)\dd x\,\rho_{\varepsilon,\hbar,N}(0,X_{N})\,\dd X_{N}\\
&\quad\,\,-\varepsilon\int_{\mathbb{T}^{N}}\int_{\mathbb{T}}\log\left(\rho(t,x)\right)   V_{\varepsilon,X_{N}}''(x)\dd x\,\rho_{\varepsilon,\hbar,N}(t,X_{N})\,\dd X_{N}.
\end{align*}
Once again, in view of Lemma  \ref{duerinckslemma}(ii)  and Lemma \ref{ nonlinear interaction} for each $\varphi\in W^{1,\infty}(\mathbb{T}^{d})$, we have the estimate:
\begin{align}
&\left\vert\varepsilon\int_{\mathbb{T}^{N}}\int_{\mathbb{T}} V_{\varepsilon,X_{N}}''(x)\varphi(x)\,\dd x\,\rho_{\varepsilon,\hbar,N}(s,X_{N})\,\dd X_{N}\right\vert\nonumber\\
&\leq \varepsilon\int_{\mathbb{T}^{N}}\left\vert\int_{\mathbb{T}} V_{\varepsilon,X_{N}}''(x)\varphi(x)\,\dd x\right\vert\rho_{\varepsilon,\hbar,N}(s,X_{N})\,\dd X_{N}\notag\\
&\leq C\left\Vert \varphi\right\Vert _{W^{1,\infty}}N^{-\lambda}+\left\Vert \varphi\right\Vert _{H^{1}}\int_{\mathbb{T}^{N}}\Big(\mathscr{E}(X_{N},m_{\varepsilon,X_{N}})+\frac{2e^{\frac{1}{\varepsilon}}}{N^{2}}\Big)^{\frac{1}{2}}\rho_{\varepsilon,\hbar,N}(s,X_{N})\,\dd X_{N} \notag\\
&\leq  C\left\Vert \varphi\right\Vert_{W^{1,\infty}}N^{-\lambda}+\sqrt{\varepsilon}\left\Vert \varphi\right\Vert_{H^1}^{2}+\sqrt{\varepsilon}\mathcal{V}_{\varepsilon,\hbar,N}(s)+\frac{2e^{\frac{1}{\varepsilon}}}{N^{2}\sqrt{\varepsilon}} \notag\\
&\leq C\left\Vert \varphi\right\Vert_{W^{1,\infty}}N^{-\lambda}+\sqrt{\varepsilon}\left\Vert \varphi\right\Vert_{H^1}^{2}+\sqrt{\varepsilon}\mathcal{F}_{0}+\frac{2e^{\frac{1}{\varepsilon}}}{N^{2}\sqrt{\varepsilon}}
, \label{1st est for j2}
\end{align}
where the last inequality is due to the conservation of energy (Lemma \ref{N body Conservation of energy }) and the assumption that $\mathcal{F}_{\varepsilon,\hbar,N}(0)\leq \mathcal{F}_{0}$. 
Utilizing \eqref{1st est for j2} with $\varphi=\partial_{s}\log(\rho(s,x))$, 
$\log(\rho(0,x))$, and $\log(\rho(t,x))$, respectively, we see that
\begin{align}
\int_{0}^{t}J_{2}(s)\,\dd s\leq
 C\Big(N^{-\lambda}+\sqrt{\varepsilon}+\frac{e^\frac{1}{\varepsilon}}{\sqrt{\varepsilon}N^{2}}\Big),\label{estimate on J2}
\end{align}
where $C=C(\left\Vert \log(\rho)\right\Vert_{W^{1,\infty}_{t,x}},\mathcal{F}_{0},T)$.
As for $J_{3}$, we have
\begin{equation}
\left|J_{3}\right|\leq\Big|\mathrm{tr}\big((i\hbar\partial_{x}+u)\vee((i\hbar\partial_{x}+u)\vee(\partial_{x} u))R_{\varepsilon,\hbar,N:1}(t)\big)\Big|\leq2\left\Vert u\right\Vert _{L^{\infty}_{t}W^{1,\infty}_{x}}\mathcal{K}_{\varepsilon,\hbar,N}(t).\label{eq:-14}
\end{equation}
Finally, applying Lemmas \ref{ nonlinear interaction} and \ref{commutator estimate}, 
we have
\begin{equation}
\left|J_{4}\right|\leq C\Big(\mathcal{V}_{\varepsilon,\hbar,N}(t)+\frac{1+e^{\frac{1}{\varepsilon}}}{N^2\varepsilon}\Big),\label{est J4}
\end{equation}
where $C=C(\left\Vert u\right\Vert_{L^{\infty}_{t}W^{1,\infty}_{x}})$.
Thus, gathering inequalities \eqref{eq:-13} and \eqref{estimate on J2}--\eqref{est J4},
we find
\begin{align*}
\mathcal{E}_{\varepsilon,\hbar,N}(t)\leq\mathcal{E}_{\varepsilon,\hbar,N}(0)+C\left(\int_{0}^{t}\mathcal{E}_{\varepsilon,\hbar,N}(s)\,\dd s+N^{-\lambda}+\sqrt{\varepsilon}+\frac{e^{\frac{1}{\varepsilon}}}{\varepsilon N^{2}}\right),
\end{align*}
where $C=C\big(T,\left\Vert u\right\Vert _{L^{\infty}_{t}W^{1,\infty}_{x}},\left\Vert \log\left(\rho\right)\right\Vert _{W_{t,x}^{1,\infty}},\mathcal{F}_{0}\big)$.
By using the Gr\"onwall inequality and Lemma \ref{duerinckslemma}(i), it follows that
\begin{align}
\mathcal{E}_{\varepsilon,\hbar,N}(t)\leq e^{Ct}\Big(\mathcal{E}_{\varepsilon,\hbar,N}(0)+N^{-\lambda}+\sqrt{\varepsilon}+\frac{e^{\frac{1}{\varepsilon}}}{\varepsilon N^{2}}\Big).\label{gronwall for E}
\end{align}

\medskip
\textbf{5}. Put $\Tilde{m}_{\varepsilon,\hbar,N}(t,x)\coloneqq \int_{\mathbb{T}^{N}}m_{\varepsilon,X_{N}}(t,x)\rho_{\varepsilon,\hbar,N}(t,X_{N})\,\dd X_{N}$. 
According to Lemma \ref{ nonlinear interaction}, we have 
$$
\left\Vert m_{\varepsilon,X_N}\right\Vert_{\infty}\leq e^\frac{1}{\varepsilon}
$$ 
so that, in view of Lemma \ref{duerinckslemma}(ii), we find
\begin{align}
\underset{t \in [0,T]}{\sup}W_{1}(\rho_{\varepsilon,\hbar,N:1}(t,\cdot),\Tilde{m}_{\varepsilon,\hbar,N}(t,\cdot))\leq \frac{C}{N^{\lambda}}+\Big(\underset{t\in [0,T]}{\sup}\mathcal{E}_{\varepsilon,\hbar,N}(t)+\frac{Ce^{\frac{1}{\varepsilon}}}{\varepsilon N^{2}}\Big)^{\frac{1}{2}}. \label{wasserstein est}\end{align}
In addition, thanks to Lemma \ref{CZK}, we see that 
\begin{align*}
\int_{\mathbb{T}}\left\vert \Tilde{m}_{\varepsilon,\hbar,N}-\rho \right\vert(t,x)\,\dd x&\leq  
\int_{\mathbb{T}^{N}}\int_{\mathbb{T}} \left\vert m_{\varepsilon,\hbar,N}-\rho\right\vert(t,x)\,\dd x \rho_{\varepsilon,\hbar,N}(t,X_{N})\,\dd X_{N}\\
&\leq 
\int_{\mathbb{T}^{N}} \sqrt{2\int_{\mathbb{T}} m_{\varepsilon,X_{N}}(t,x)\log (\frac{m_{\varepsilon,X_{N}}(t,x)}{\rho(t,x)})\dd x}\rho_{\varepsilon,\hbar,N}(t,X_{N}) \,\dd X_{N}\\
&\leq \sqrt{2\int_{\mathbb{T}^{N}} \int_{\mathbb{T}} m_{\varepsilon,X_{N}}(t,x)\log (\frac{m_{\varepsilon,X_{N}}(t,x)}{\rho(t,x)})\dd x\rho_{\varepsilon,\hbar,N}(t,X_{N}) \,\dd X_{N}},
\end{align*}
where the last inequality is by Cauchy-Schwarz and the fact that $\rho_{\varepsilon,\hbar,N}$ is a probability density. Thus, it follows from Lemma \ref{duerinckslemma}(i) that
\begin{align}
\underset{t \in [0,T]}{\sup}\left\Vert (\tilde{m}_{\varepsilon,\hbar,N}-\rho)(t,\cdot)\right\Vert_{1}\leq C\Big(\underset{t \in [0,T]}{\sup}\mathcal{E}_{\varepsilon,\hbar,N}(t)+\frac{1+e^{\frac{1}{\varepsilon}}}{\varepsilon N^{2}}\Big)^{\frac{1}{2}} \label{L1 est}
\end{align}
for some constant $C>0$ independent of $(\varepsilon, \hbar, N)$.
Hence, thanks to the assumption on $\varepsilon=\varepsilon(N)$,  it follows from inequalities \eqref{gronwall for E}--\eqref{L1 est} that
\begin{align*}
\underset{t \in [0,T]}{\sup}W_{1}(\rho_{\varepsilon,\hbar,N:1}(t,\cdot),\rho(t,\cdot))\underset{\varepsilon+\hbar+\frac{1}{N}\rightarrow 0}{\longrightarrow}0.
\end{align*}
The convergence:
$$
J_{\varepsilon,\hbar,N:1}(t,\cdot)\underset{\varepsilon+\hbar+\frac{1}{N}\rightarrow 0}{\longrightarrow}(\rho u)(t,\cdot)
$$ 
is deduced by the same argument presented in Step 5 in the proof of Theorem \ref{1body main intro}. 
This completes the proof of Theorem \ref{3rd main result intro}.
\begin{flushright}
$\square$
\par\end{flushright}

\section{Well-Prepared Initial Data} \label{well prepared sec}
We start by constructing well-prepared initial data for the 1-body problem, {\it i.e.,} the initial data such that
$\mathcal{E}_{\varepsilon,\hbar}(0)\rightarrow0$ as $\varepsilon+\hbar\rightarrow0$.
Our construction of the initial data is summarized in the following lemma: 

\begin{lem}
Let $\rho_{0} \in H^{s}(\mathbb{T}^{d})\cap \mathcal{P}(\mathbb{T}^{d})$, $\rho_0>0$, 
and $u_{0}=\nabla U_{0}\in H^{s}(\mathbb{T}^{d})$ for some $s>1$ sufficiently large. 
Let $V_{0}=\log(\rho_{0})$. Set
\begin{align}
\psi^{\mathrm{in}}_{\varepsilon,\hbar}\coloneqq \sqrt{e^{V_{0}}-\varepsilon\Delta V_{0}}e^{\frac{iU_{0}(x)}{\hbar}}.
\end{align}
Then, for this choice of $\psi^{\mathrm{in}}_{\varepsilon,\hbar}$,
\begin{align*}
\mathcal{E}_{\varepsilon,\hbar}(0)\underset{\varepsilon+\hbar\rightarrow 0}{\longrightarrow}0.
\end{align*}
\label{initial hartree}
\end{lem}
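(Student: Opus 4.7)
The approach is to exploit the WKB-type construction to make each of the three summands of $\mathcal{E}_{\varepsilon,\hbar}(0)$ controllable. The key observation is that $|\psi^{\mathrm{in}}_{\varepsilon,\hbar}|^{2}=e^{V_{0}}-\varepsilon\Delta V_{0}$ is tailored precisely so that $V_{0}$ itself solves the Poisson-Boltzmann equation with $\rho^{\mathrm{in}}_{\varepsilon,\hbar}\coloneqq|\psi^{\mathrm{in}}_{\varepsilon,\hbar}|^{2}$ as source: indeed, $-\varepsilon\Delta V_{0}=\rho^{\mathrm{in}}_{\varepsilon,\hbar}-e^{V_{0}}$ by construction. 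By the uniqueness invoked in Lemma \ref{existence uniqueness for hat{U}} and the subsequent remark, $V^{\mathrm{in}}_{\varepsilon,\hbar}=V_{0}$, hence $m^{\mathrm{in}}_{\varepsilon,\hbar}=e^{V_{0}}=\rho_{0}$. Consequently, the relative-entropy contribution to $\mathcal{E}_{\varepsilon,\hbar}(0)$ vanishes identically (the integrand is $\rho_{0}\log(\rho_{0}/\rho_{0})-\rho_{0}+\rho_{0}=0$), and normalization $\int_{\mathbb{T}^{d}}|\psi^{\mathrm{in}}_{\varepsilon,\hbar}|^{2}\,\dd x=\int\rho_{0}\,\dd x=1$ follows because $\int\Delta V_{0}\,\dd x=0$ on the torus.

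The potential term is immediate: $\mathcal{V}_{\varepsilon,\hbar}(0)=\tfrac{\varepsilon}{2}\|\nabla V_{0}\|_{L^{2}}^{2}=O(\varepsilon)$. For the kinetic part, using the polar form $\psi^{\mathrm{in}}_{\varepsilon,\hbar}=\sqrt{\rho^{\mathrm{in}}_{\varepsilon,\hbar}}\,e^{iU_{0}/\hbar}$ with $u_{0}=\nabla U_{0}$, a direct computation of $i\hbar\nabla\psi^{\mathrm{in}}_{\varepsilon,\hbar}$ shows that the $O(1/\hbar)$ phase contribution cancels exactly against the multiplication operator $u_{0}\psi^{\mathrm{in}}_{\varepsilon,\hbar}$, leaving
\begin{align*}
(i\hbar\nabla+u_{0})\psi^{\mathrm{in}}_{\varepsilon,\hbar}=i\hbar\,\bigl(\nabla\sqrt{\rho^{\mathrm{in}}_{\varepsilon,\hbar}}\bigr)\,e^{iU_{0}/\hbar}.
\end{align*}
By Remark \ref{rem2.9}, this gives $\mathcal{K}_{\varepsilon,\hbar}(0)=\frac{\hbar^{2}}{2}\int_{\mathbb{T}^{d}}|\nabla\sqrt{\rho^{\mathrm{in}}_{\varepsilon,\hbar}}|^{2}\,\dd x$, reducing the kinetic term to a controlled $\hbar^{2}$ correction.

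It then remains to bound this last quantity uniformly in $\varepsilon$. Since $\rho_{0}>0$ is continuous on the compact $\mathbb{T}^{d}$, there exists $c_{0}>0$ with $\rho_{0}\geq c_{0}$; provided $s$ is large enough that $V_{0}=\log\rho_{0}\in H^{s}$ with $\Delta V_{0}\in L^{\infty}$ (for instance $s>d/2+2$, using $\rho_{0}\geq c_{0}$ and the Sobolev algebra to pass from $\rho_{0}\in H^{s}$ to $\log\rho_{0}\in H^{s}$), we have $\rho^{\mathrm{in}}_{\varepsilon,\hbar}\geq c_{0}/2$ for $\varepsilon$ small, and $\nabla\rho^{\mathrm{in}}_{\varepsilon,\hbar}=\nabla\rho_{0}-\varepsilon\nabla\Delta V_{0}$ is uniformly bounded in $L^{2}$. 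Therefore $\|\nabla\sqrt{\rho^{\mathrm{in}}_{\varepsilon,\hbar}}\|_{L^{2}}=O(1)$ and $\mathcal{K}_{\varepsilon,\hbar}(0)=O(\hbar^{2})$. Summing the three contributions yields $\mathcal{E}_{\varepsilon,\hbar}(0)=O(\varepsilon+\hbar^{2})\to 0$ as $\varepsilon+\hbar\to 0$. There is no serious obstacle; the only care needed is in organizing the regularity hypothesis so that $V_{0}\in H^{s}$ and $\Delta V_{0}\in L^{\infty}$, which is what fixes the meaning of ``$s$ sufficiently large'' in the statement.
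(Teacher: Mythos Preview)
Your proof is correct and follows essentially the same approach as the paper: both identify that the construction forces $V^{\mathrm{in}}_{\varepsilon,\hbar}=V_{0}$ (hence the entropy term vanishes and $\mathcal{V}_{\varepsilon,\hbar}(0)=O(\varepsilon)$), and both reduce the kinetic term to $\hbar^{2}\|\nabla\sqrt{\rho^{\mathrm{in}}_{\varepsilon,\hbar}}\|_{2}^{2}$ up to constants. Your direct computation of $(i\hbar\nabla+u_{0})\psi^{\mathrm{in}}_{\varepsilon,\hbar}$ via the polar form is in fact slightly cleaner than the paper's expansion of the square, but the underlying idea is identical.
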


\begin{proof}
Clearly, for $\varepsilon$ sufficiently small, $e^{V_{0}}-\varepsilon \Delta V_{0}$ positive 
and $\Delta V_{0}$ has mean $0$, so that $\vert \psi^{\mathrm{in}}_{\varepsilon,\hbar}\vert^{2}$ 
is a probability density. Moreover, it follows from the construction that the Poisson-Boltzman equation 
\begin{align*}
-\varepsilon \Delta V_{0}=\left\vert \psi^{\rm in}_{\varepsilon,\hbar}\right\vert^{2}-e^{V_{0}}     
\end{align*} 
is satisfied.  
To show that $\mathcal{E}_{\varepsilon,\hbar}(0)\underset{\varepsilon +\hbar \rightarrow 0}{\rightarrow}0 $, 
we start by controlling the kinetic part. 
Put $\varrho_{\varepsilon}\coloneqq \sqrt{e^{V_{0}}-\varepsilon \Delta V_{0}}$ so that  
\begin{align*}
\mathcal{K}_{\varepsilon,\hbar}(0)=\int_{\mathbb{T}^{d}} \left\vert (i\hbar\nabla+u_{0})\psi_{\varepsilon,\hbar}^{\mathrm{in}}\right\vert^{2}(x)\,\dd x =&
 \int_{\mathbb{T}^{d}} \left\vert \big(i\hbar\nabla+u_{0}(x)\big)\varrho_{\varepsilon}(x)e^{\frac{iU_{0}(x)}{\hbar}}\right\vert^{2}\,\dd x.
  \label{kinetic part}
\end{align*}
We expand the right-hand side of the last identity. 
First, notice that
\begin{align*}
&\hbar^{2}\int_{\mathbb{T}^{d}} \Big\vert \nabla \big(\varrho_{\varepsilon}e^{\frac{iU_{0}(x)}{\hbar}}\big) \Big\vert^{2}\,\dd x\\
&\quad= \int_{\mathbb{T}^{d}} \left\vert \varrho_{\varepsilon}\nabla U_{0} \right\vert^{2}(x)\,\dd x+2\hbar^{2}\Re\int_{\mathbb{T}^{d}}\varrho_{\varepsilon}(x)\nabla e^{\frac{iU_{0}(x)}{\hbar}}\ \dd x +\hbar^{2}\int_{\mathbb{T}^{d}}\left\vert \nabla \varrho_{\varepsilon}\right\vert^{2}(x)\,\dd x,\\
&\int_{\mathbb{T}^{d}} \Big\vert u_{0}(x) \varrho_{\varepsilon}e^{\frac{iU_{0}(x)}{\hbar}} \Big\vert^{2}\,\dd x= \int_{\mathbb{T}^{d}} \left\vert \varrho_{\varepsilon}u_{0}  \right\vert^{2}(x)\,\dd x,    
\end{align*}
and
\begin{align*}
&i\hbar\int_{\mathbb{T}^{d}} u_{0}(x)  \varrho_{\varepsilon}(x)e^{-\frac{iU_{0}(x)}{\hbar}} \nabla (\varrho_{\varepsilon}(x)e^{\frac{iU_{0}(x)}{\hbar}})\,\dd x-i\hbar\int_{\mathbb{T}^{d}} \varrho_{\varepsilon}(x)e^{\frac{iU_{0}(x
)}{\hbar}}u_{0}(x)  \nabla (\varrho_{\varepsilon}(x)e^{-\frac{iU_{0}(x)}{\hbar}})\,\dd x\\
&=i\hbar\int_{\mathbb{T}^{d}} u_{0}(x)  \varrho_{\varepsilon}^{2}(x)e^{-\frac{iU_{0}(x)}{\hbar}} \nabla (e^{\frac{iU_{0}(x)}{\hbar}})\,\dd x-i\hbar\int_{\mathbb{T}^{d}} \varrho_{\varepsilon}^{2}(x)e^{\frac{iU_{0}(x
)}{\hbar}}u_{0}(x)  \nabla (e^{-\frac{iU_{0}(x)}{\hbar}})\,\dd x
\\
&=-2\int_{\mathbb{T}^{d}} \varrho_{\varepsilon}^{2}(x)\nabla U_{0}(x)\cdot u_{0}(x)\,\dd x.
\end{align*}
Thus, expanding the square yields
\begin{equation}\label{4.2a}
\mathcal{K}_{\varepsilon,\hbar}(0)
=\int_{\mathbb{T}^{d}} \left\vert \varrho_{\varepsilon}(\nabla U_{0}-u_{0})\right\vert^{2}(x)\,\dd x
+2\hbar^{2}\Re\int_{\mathbb{T}^{d}}\varrho_{\varepsilon}(x)\nabla e^{\frac{iU_{0}(x)}{\hbar}}\, \dd x
+\hbar^{2}\int_{\mathbb{T}^{d}}\left\vert \nabla \varrho_{\varepsilon}(x)\right\vert^{2}\,\dd x. 
\end{equation}
Since $u_{0}=\nabla U_{0}$, the first term in \eqref{4.2a} vanishes identically. 
Clearly, $\left\Vert \nabla \varrho_{\varepsilon}\right\Vert_{2}$ is uniformly bounded in $\varepsilon$
so that the second and third terms in \eqref{4.2a} are bounded by 
\begin{align*}
2\hbar\left\Vert \varrho_{\varepsilon}\right\Vert_{1}\left\Vert \nabla U_{0}\right\Vert_{\infty}+\hbar^{2} \left\Vert \nabla \varrho_{\varepsilon}\right\Vert_{2}^{2}\underset{\varepsilon+\hbar\rightarrow0}{\longrightarrow}0. 
\end{align*}
To conclude, this implies that 
$$
\mathcal{K}_{\varepsilon,\hbar}(0)\underset{\varepsilon+\hbar\rightarrow0}{\longrightarrow}0.
$$
Finally, it is direct that the last term in \eqref{Modulated energy intro} vanishes identically and the second term $\mathcal{V}_{\varepsilon,\hbar}(0)$ satisfies 
\begin{align*}
\mathcal{V}_{\varepsilon,\hbar}(0)=\frac{\varepsilon}{2}\int_{\mathbb{T}^{d}}\left\vert \nabla V_{0}(x)\right\vert^{2} \ \dd x\underset{\varepsilon\rightarrow0}{\longrightarrow}0.     
\end{align*}
Therefore, we conclude that  
$$
\mathcal{E}_{\varepsilon, \hbar}(0)\underset{\varepsilon+\hbar\rightarrow 0}{\rightarrow}0.
$$
\end{proof}

\medskip
The construction of $R^{\mathrm{in}}_{\varepsilon,\hbar,N}$ proceeds as follows:
\begin{lem}
Let $\rho_{0} \in H^{s}(\mathbb{T})\cap \mathcal{P}(\mathbb{T})$, $\rho_0>0,$ 
and $u_{0}= U_{0}'\in H^{s}(\mathbb{T})$ for some $s>1$ sufficiently large. 
Let $V_0=\log (\rho_{0})$.
Let $\psi^{\mathrm{in}}_{\varepsilon,\hbar}\coloneqq \sqrt{e^{V_{0}}-\varepsilon\Delta V_{0}}e^{\frac{iU_{0}}{\hbar}}$ and
$R^{\mathrm{in}}_{\varepsilon,\hbar}\coloneqq \big|\psi_{\varepsilon,\hbar}^{\mathrm{in}}\left\rangle \right\langle \psi_{\varepsilon,\hbar}^{\mathrm{in}}\big|$.  
Set $R^{\mathrm{in}}_{\varepsilon,\hbar,N}={R_{\varepsilon,\hbar}^{\mathrm{in}^{\otimes N}}}$. 
Then there exist some $\Lambda>0$ such that
\begin{align*}
\mathcal{E}_{\varepsilon,\hbar,N}(0)\underset{\varepsilon+\hbar+\frac{1}{N}\rightarrow0}{\longrightarrow}0
\end{align*}
provided that $\varepsilon=\varepsilon(N)$ is chosen such that $\frac{1}{\varepsilon^{2}N^{\Lambda}}\underset{N \rightarrow \infty}{\rightarrow} 0$.
\end{lem}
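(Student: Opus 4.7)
The plan is to decompose $\mathcal{E}_{\varepsilon,\hbar,N}(0)$ into its kinetic $\mathcal{K}_{\varepsilon,\hbar,N}(0)$, interaction $\mathcal{V}_{\varepsilon,\hbar,N}(0)$, and relative-entropy contributions, and to control each one after averaging over $X_N$ distributed according to $\rho^{\mathrm{in}}_{N}\coloneqq(\rho^{\mathrm{in}})^{\otimes N}$, where $\rho^{\mathrm{in}}(x)=|\psi^{\mathrm{in}}_{\varepsilon,\hbar}(x)|^{2}=e^{V_{0}}-\varepsilon V_{0}''$. The guiding observation is that, by construction, $V_{0}$ is already the Poisson--Boltzmann solution associated with source $\rho^{\mathrm{in}}$: $-\varepsilon V_{0}''=\rho^{\mathrm{in}}-e^{V_{0}}$. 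Sampling $X_{N}$ i.i.d.\ from $\rho^{\mathrm{in}}$ therefore makes $V_{\varepsilon,X_{N}}$ a random fluctuation around $V_{0}$, and the task reduces to controlling $W\coloneqq V_{\varepsilon,X_{N}}-V_{0}$ through the Wasserstein-stability tools available for the Poisson--Boltzmann equation.

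For the kinetic piece, the tensor-product structure of $R^{\mathrm{in}}_{\varepsilon,\hbar,N}$ combined with permutation symmetry gives $\mathcal{K}_{\varepsilon,\hbar,N}(0)=\mathcal{K}_{\varepsilon,\hbar}(0)$ directly, which vanishes as $\varepsilon+\hbar\to 0$ by Lemma \ref{initial hartree}. For the interaction piece, an integration by parts in $\mathscr{E}(X_{N},m_{\varepsilon,X_{N}})$ against $V_{\varepsilon,X_N}$ identifies it as $\varepsilon^{2}\|V'_{\varepsilon,X_{N}}\|_{2}^{2}$, so that $\mathcal{V}_{\varepsilon,\hbar,N}(0)=\tfrac{\varepsilon}{2}\int_{\mathbb{T}^{N}}\|V'_{\varepsilon,X_{N}}\|_{2}^{2}\,\rho^{\mathrm{in}}_{N}(\dd X_{N})$. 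Lemma \ref{1d stability for measure} applied to $(\mu_{X_{N}},\rho^{\mathrm{in}})$ will yield $\|W'\|_{2}\leq C W_{1}(\mu_{X_{N}},\rho^{\mathrm{in}})/\varepsilon^{3/2}$, while the standard 1-D quantile estimate gives $\int_{\mathbb{T}^{N}}W_{1}(\mu_{X_{N}},\rho^{\mathrm{in}})^{2}\,\rho^{\mathrm{in}}_{N}(\dd X_{N})\leq C/N$. Combining these yields $\mathcal{V}_{\varepsilon,\hbar,N}(0)\leq C\varepsilon+C/(\varepsilon^{2}N)$, which vanishes provided $\varepsilon^{2}N^{\Lambda}\to\infty$ for any $\Lambda\geq 1$.

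The relative-entropy piece is the delicate one. A short computation using $\int m_{\varepsilon,X_{N}}\,\dd x=\int\rho_{0}\,\dd x=1$ gives $\mathcal{H}(m_{\varepsilon,X_{N}}|\rho_{0})=\int_{\mathbb{T}}e^{V_{\varepsilon,X_{N}}}W\,\dd x$. Subtracting the Poisson--Boltzmann equations for $V_{\varepsilon,X_{N}}$ and $V_{0}$, multiplying by $W$, and integrating produces the identity
\[
\varepsilon\|W'\|_{2}^{2}+\mathcal{H}(m_{\varepsilon,X_{N}}|\rho_{0})+\int_{\mathbb{T}}\rho_{0}\big(e^{W}-1-W\big)\,\dd x=\int_{\mathbb{T}} W\,\dd(\mu_{X_{N}}-\rho^{\mathrm{in}}),
\]
whose left-hand side is a sum of three non-negative terms, the third by convexity of the exponential combined with $\int e^{V_{0}}(e^{W}-1)\,\dd x=0$. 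Kantorovich--Rubinstein duality then gives $\mathcal{H}(m_{\varepsilon,X_{N}}|\rho_{0})\leq\|W\|_{\mathrm{Lip}}\,W_{1}(\mu_{X_{N}},\rho^{\mathrm{in}})$. The explicit 1-D Green function $K(x)=(x^{2}-|x|)/2$ satisfies $|K'|\leq\tfrac{1}{2}$, producing $\|\tilde V'_{\varepsilon,X_{N}}\|_{\infty}\leq 1/(2\varepsilon)$, and together with $\|\hat V'_{\varepsilon,X_{N}}\|_{\infty}\leq 1$ (a consequence of Lemma \ref{existence uniqueness 1D}) one obtains $\|W\|_{\mathrm{Lip}}\leq C/\varepsilon$. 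Averaging produces $\int_{\mathbb{T}^{N}}\mathcal{H}(m_{\varepsilon,X_{N}}|\rho_{0})\,\rho^{\mathrm{in}}_{N}(\dd X_{N})\leq C/(\varepsilon\sqrt{N})$, again vanishing under $\varepsilon^{2}N\to\infty$.

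The main obstacle is precisely this entropy bound. The only $L^{\infty}$ control on $m_{\varepsilon,X_{N}}$ is the exponential $\|m_{\varepsilon,X_{N}}\|_{\infty}\leq e^{1/\varepsilon}$ of Lemma \ref{ nonlinear interaction}, which would destroy the estimate if inserted into a naive Taylor-type bound on $\mathcal{H}$. The identity above bypasses the exponential by trading pointwise control of $m_{\varepsilon,X_{N}}$ for a first-order Wasserstein control of $\mu_{X_{N}}$ paired with the merely $\varepsilon^{-1}$-Lipschitz bound on $W$, keeping $\Lambda$ polynomial and compatible with the logarithmic vanishing rate for $\varepsilon(N)$ required by Theorem \ref{3rd main result intro}.
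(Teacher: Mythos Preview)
Your argument is correct, and it diverges from the paper's proof in two of the three pieces.

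For the interaction term, the paper introduces an $\eta$-truncation $\mu_{X_N}^{(\eta)}$ of the empirical measure, bounds $\varepsilon\|V'_{\varepsilon,X_N,\eta}-V'_0\|_2^2$ via Lemma~\ref{1d stability for measure}, and then invokes Serfaty's renormalized-energy identity to pass to the limit $\eta\to 0$. Your direct identification $\mathscr{E}(X_N,m_{\varepsilon,X_N})=\varepsilon^{2}\|V'_{\varepsilon,X_N}\|_2^2$ bypasses this entirely; it is legitimate in 1-D because $K$ is bounded with $K(0)=0$, so no self-energy subtraction is needed. For the entropy term, the paper simply bounds $\mathcal{H}(m_{\varepsilon,X_N}|\rho_0)\leq\|V_{\varepsilon,X_N}-V_0\|_\infty\leq\|V_{\varepsilon,X_N}-V_0\|_{H^1}\leq C\varepsilon^{-3/2}W_1(\mu_{X_N},\rho^{\mathrm{in}})$ via Sobolev embedding and Lemma~\ref{1d stability for measure}. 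Your energy identity for $W$ followed by Kantorovich--Rubinstein gives the sharper prefactor $C\varepsilon^{-1}$ and is structurally cleaner: the three non-negative terms on the left absorb the nonlinearity without ever touching the exponential $L^\infty$ bound on $m_{\varepsilon,X_N}$.

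One small correction: Lemma~\ref{existence uniqueness 1D} asserts $\|\hat V'\|_{\mathrm{Lip}}\leq 1$ for the \emph{unscaled} problem, not $\|\hat V'_{\varepsilon,X_N}\|_\infty\leq 1$ for the $\varepsilon$-scaled one. The correct bound is $\|\hat V'_{\varepsilon,X_N}\|_\infty\leq 2/\varepsilon$: since $\hat V'_{\varepsilon,X_N}$ has mean zero on $\mathbb{T}$ it vanishes at some $x_0$, and then $\varepsilon|\hat V'_{\varepsilon,X_N}(x)|=|\int_{x_0}^{x}(e^{V_{\varepsilon,X_N}}-1)\,\dd y|\leq \int_{\mathbb{T}}e^{V_{\varepsilon,X_N}}\,\dd y+1=2$. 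This does not alter your conclusion $\|W\|_{\mathrm{Lip}}\leq C/\varepsilon$, so the proof stands.
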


\begin{proof}
Note that
\begin{align}
    \mathcal{K}_{\varepsilon,\hbar,N}(0)
    =\frac{1}{2N}\sum^N_{j=1}\mathrm{tr}\big((i\hbar\partial_{x_{j}}+u_{0}(x_{j}))^{2}R^{\mathrm{in}}_{\varepsilon,\hbar,N}\big)
    =\frac{1}{2}\mathrm{tr}\big((i\hbar\partial_{x}+u_{0})^{2}R^{\mathrm{in}}_{\varepsilon,\hbar}\big).\label{step1N}
\end{align}
The same considerations demonstrated in Lemma \ref{initial hartree} show that the right-hand side of \eqref{step1N} tends to $0$ as $\varepsilon+\hbar+\frac{1}{N}\rightarrow 0$. Note also that $\rho_{\varepsilon,\hbar,N}(0,X_{N}) \equiv \big(\vert \psi_{\varepsilon,\hbar}^{\mathrm{in}}\vert^{2}\big)^{\otimes N}\coloneqq\rho_{\varepsilon}^{\otimes N}$.
To show that the entropy part vanishes asymptotically, it follows from Lemma \ref{1d stability for measure} that 
\begin{align*}
\int_{\mathbb{T}}m_{\varepsilon,X_{N}}(x)\log\big(\frac{m_{\varepsilon,X_{N}}(x)}{\rho_{0}(x)}\big)\,\dd x&=\int_{\mathbb{T}} m_{\varepsilon,X_{N}}(x)(V_{\varepsilon,X_{N}}-V_{0})(x)\,\dd x\leq \left\Vert V_{\varepsilon,X_{N}}-V_{0}\right\Vert_{\infty}\\
&\leq \left\Vert V_{\varepsilon,X_{N}}-V_{0}\right\Vert_{H^{1}}\leq \frac{5}{4\varepsilon^{\frac{3}{2}}}W_{1}(\mu_{X_{N}},\rho_{\varepsilon}) .     
\end{align*}
Thus, owing to Remark \ref{large deviation rem} below, we see that 
\begin{align}
& \int_{\mathbb{T}^{N}}\int_{\mathbb{T}} m_{\varepsilon,X_{N}}(x)\log\big(\frac{m_{\varepsilon,X_{N}}(x)}{\rho_{0}(x)}\big)\,\dd x\rho_{N}(0,X_{N})\,\dd X_{N} \notag\\
&\leq \frac{5}{4\varepsilon^{\frac{3}{2}}}\int_{\mathbb{T}^{N}}W_{1}(\mu_{X_{N}},\rho_{\varepsilon})\rho_{\varepsilon}^{\otimes N}(X_{N})\,\dd X_{N}\leq \frac{5}{4N^{\Lambda}\varepsilon^{\frac{3}{2}}}.  \label{entropypart}   
\end{align}
\\
Next, we treat the interaction part $\mathcal{V}_{\varepsilon,\hbar,N}(0)$. 
Given a configuration $X_{N}\in \mathbb{T}^{N}$ and $\eta>0$, consider the truncated empirical measure
\begin{align*}
\mu_{X_{N}}^{(\eta)}=\frac{1}{N}\sum^N_{i=1}\delta_{x_{i}}^{(\eta)},
\end{align*}
where $\delta_{x}^{(\eta)}$ designates the $\eta$-truncation of $\delta_{x}$.
\footnote{For instance, we can take $\delta_{x}^{(\eta)}=\chi_{\eta}\star \delta_{x}$, where $\chi$ is a smooth probability density supported on $B(0,\frac{1}{4})\setminus B(0,\frac{3}{16})$ and $\chi_{\eta}(y)\coloneqq \frac{1}{\eta}\chi(\frac{y}{\eta})$.}
Denote by $V_{\varepsilon,X_{N},\eta}$ the solution of
\begin{align*}
    -\varepsilon V_{\varepsilon,X_{N},\eta}''= \mu^{(\eta)}_{X_{N}}-e^{V_{\varepsilon,X_{N},\eta}}.
\end{align*}
According to Lemma \ref{1d stability for measure}, we have
\begin{align*}
\varepsilon \| \hat{V}_{\varepsilon,X_{N},\eta}'-\hat{V}_{0}'\|_{2}^2
&\leq \frac{1}{4\varepsilon^{2}}  W_{1}^{2}(\mu^{(\eta)}_{X_{N}},\rho_{\varepsilon})\\
&\leq \frac{1}{2\varepsilon^{2}}\left( W_{1}^{2}(\mu_{X_{N}},\rho_{\varepsilon})+W_{1}^{2}(\mu_{X_{N}}^{(\eta)},\mu_{X_{N}})\right)\\
&\leq\frac{1}{2\varepsilon^{2}}\left(W_{1}^{2}(\mu_{X_{N}},\rho_{\varepsilon})+\eta^{2}\right) ,
\end{align*}
where the last inequality is due to the general estimate
$W_{1}(\chi_{\eta}\star \mu,\mu)\leq \eta$ (see {\it e.g.} Lemma 7.1 in \cite{griffin2020singular}).
From the same considerations, we have
\begin{align*}
 \varepsilon \|  \tilde{V}_{\varepsilon,X_{N},\eta}'-\tilde{V}_{0}'\|_{2}^2
 \leq \frac{2}{\varepsilon}W_{1}^{2}(\mu^{(\eta)}_{X_{N}},\rho_{\varepsilon})\leq \frac{2}{\varepsilon}\left(W_{1}^{2}(\mu_{X_{N}},\rho_{\varepsilon})+\eta^{2}\right) ,
\end{align*}
which implies that
\begin{align*}
 \varepsilon||  V_{\varepsilon,X_{N},\eta}'-V_{0}'||_{2}^{2}\leq \frac{5}{2\varepsilon^{2}} \left(W_{1}^{2}(\mu_{X_{N}},\rho_{\varepsilon})+\eta^{2}\right).
\end{align*}
Integrating with respect to $\rho_{\varepsilon,\hbar,N}(0,X_{N})$, we have
\begin{align*}
\varepsilon\int_{\mathbb{T}^N}\|  V_{\varepsilon,X_{N},\eta}'-V_{0}'\|_{2}^{2}\,\rho_{\varepsilon,\hbar,N}(0,\dd X_{N})\leq \frac{5}{2\varepsilon^{2}} \left(\int_{\mathbb{T}^N} W_{1}^{2}(\mu_{X_{N}},\rho_{\varepsilon})\,\rho_{\varepsilon,\hbar,N}(0,\dd X_{N})+\eta^{2}\right).
\end{align*}
Again, we invoke Remark \ref{large deviation rem} below to find that there is some $\Lambda>0$ such that
\begin{align}
\int_{\mathbb{T}^N} W_{1}^{2}(\mu_{X_{N}},\rho_{\varepsilon})\,\rho_{\varepsilon,\hbar,N}(0, \dd X_{N})=O(\frac{1}{N^{\Lambda}}).\label{ld ine}
\end{align}
If $\varepsilon=\varepsilon(N)$ is chosen such that $\frac{1}{\varepsilon^{2}N^{\Lambda}}\rightarrow 0$ as $N\rightarrow \infty,$ and $\eta=\eta(\varepsilon)$ such that $\frac{\eta(\varepsilon)}{\varepsilon}\underset{\varepsilon \rightarrow0}{\rightarrow}0$, we conclude that
\begin{align*}
\varepsilon\int_{\mathbb{T}^N}\|  V_{\varepsilon,X_{N},\eta}'-V'_{0}\|_{2}^{2}\,\rho_{\varepsilon,\hbar,N}(0,\,\dd X_{N})\underset{\varepsilon+\hbar+\frac{1}{N}\rightarrow0}{\longrightarrow}0,
\end{align*}
and hence 
\begin{align*}
\varepsilon\int_{\mathbb{T}^N}||  V_{\varepsilon,X_{N},\eta}'||_{2}^{2}\,\rho_{\varepsilon,\hbar,N}(0,\,\dd X_{N})\underset{\varepsilon+\hbar+\frac{1}{N}\rightarrow0}{\longrightarrow}0.    
\end{align*}
Therefore, for this choice of $\varepsilon$, we have
\begin{align*}
\mathcal{V}_{\varepsilon,\hbar,N}(0)=\frac{1}{N}\int_{\mathbb{T}^{N}} \mathscr{V}_{\varepsilon,X_{N}}\,\rho_{\varepsilon,\hbar,N}(0,\,\dd X_{N})\underset{\varepsilon+\hbar+\frac{1}{N}\rightarrow0}{\longrightarrow}0,
\end{align*}
because of the relation (see formula 1.29 in \cite{duerinckx2020mean}):
\begin{align*}
\underset{\eta \rightarrow 0}{\lim}\,\varepsilon\int_{\mathbb{T}^{N}}\left\Vert  V_{\varepsilon,X_{N},\eta}'\right\Vert_{2}^2\,\rho_{\varepsilon,\hbar,N}(0,\,\dd X_{N})
=\frac{1}{N}\int_{\mathbb{T}^{N}} \mathscr{V}_{\varepsilon,X_{N}}\,
\rho_{\varepsilon,\hbar,N}(0,\,\dd X_{N}).
\end{align*}
\end{proof}

\begin{rem}
\label{large deviation rem} The large-deviation-type inequality used 
in \eqref{entropypart}--\eqref{ld ine} and its various extensions appear, for instance, in {\rm \cite{fournier2015rate}}.
Here is a short way to deduce this estimate from {\rm Lemma \ref{duerinckslemma}(ii)} 
for the specific choice $\rho_{0}\equiv 1$. 
Indeed, integrating the inequality in {\rm Lemma \ref{duerinckslemma}(ii)} with respect to $X_{N}$, we obtain 
that, for some constant $\lambda,C>0$,
\begin{align}
\int_{\mathbb{T}^{N}}W_{1}^{2}(\mu_{X_{N}},1)\,\dd X_{N}\leq C\Big(N^{-2\lambda}+\int_{\mathbb{T}^{N}}\mathcal{E}(X_{N},1)\,\dd X_{N}+\frac{2}{N^{2}} \Big). \label{coerc reminder}
\end{align}
A direct calculation reveals that
\begin{align*}
\mathcal{E}(X_{N},1)=\frac{1}{N^{2}}\sum_{1\leq i,j\leq N}K(x_{i}-x_{j})-\int_{\mathbb{T}}K(x)\,\dd x.
\end{align*}
Furthermore, utilizing that $K(0)=0$ and $K$ is even, we have
\begin{align*}
\frac{1}{N^{2}}\sum_{1\leq i,j\leq N}\int_{\mathbb{T}^{N}}K(x_{i}-x_{j})\,\dd X_{N}=\frac{N-1}{N}\int_{\mathbb{T}}K(x)\,\dd x,
\end{align*}
which shows that
\begin{align}
\left\vert\int_{\mathbb{T}^{N}}\mathcal{E}(X_{N},1)\,\dd X_{N}\right\vert=\frac{\left\vert \int_{\mathbb{T}}K(x)\,\dd x\right\vert }{N}\leq \frac{1}{N}.    \label{est for int XN}
\end{align}
Substituting  \eqref{est for int XN} into \eqref{coerc reminder} yields
\begin{align*}
\int_{\mathbb{T}^{N}}W_{1}^2(\mu_{X_{N}},1)\,\dd X_{N}=O(\frac{1}{N^{\Lambda}})
\end{align*}
with $\Lambda=\min\left\{2\lambda,1\right\}$.
\end{rem}
\bigskip

\section{Well-Posedness Theory}
\label{Well-posedness theory}
This section is qualitative
in nature, and we take $\varepsilon=\hbar=1,$ for simplicity. We start with the von Neumann equation \eqref{eq:von N intro}. Recall Kato's pertubation theory, which is the main
ingredient of studying the existence theory of linear equations of Schr{\"o}dinger-type.

\begin{thm}[\cite{teschl2014mathematical}, Theorem 6.4]
\label{Kato's theorem }
Let $T,D(T)\subset\mathfrak{H}$ be a {\rm(}essentially{\rm)}
self-adjoint operator and $S,D(S)\subset\mathfrak{H}$ a symmetric
operator such that $D(T)\subset D(S)$. 
Suppose that there exist $0<a<1$ and $b>0$
such that, for each $\varphi\in D(T),$
\begin{equation}
\|S\varphi\|^{2}\leq a\|T\varphi\|^{2}+b\|\varphi\|^{2}.\label{eq:-25-1}
\end{equation}
Then $T+S$ and $D(T+S)=D(T)$ are {\rm(}essentially{\rm)} self-adjoint. In the case
when $T$ is essentially self-adjoint, $D(\overline{T})\subset D(\overline{S})$
and $\overline{T+S}=\overline{T}+\overline{S}$, 
where $\overline{T}$ and $\overline{S}$ stand for the closures of $T$ and $S$ respectively.
\end{thm}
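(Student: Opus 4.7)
The plan is to reduce the whole statement to the basic range criterion for self-adjointness: a densely defined symmetric operator $A$ on $\mathfrak{H}$ is self-adjoint iff $\operatorname{Ran}(A \pm i\mu) = \mathfrak{H}$ for some $\mu > 0$, and essentially self-adjoint iff these ranges are merely dense in $\mathfrak{H}$. Symmetry of $T+S$ on $D(T)$ is immediate from the symmetries of $T$ and $S$ together with the inclusion $D(T) \subset D(S)$, so everything reduces to analyzing the range of $T + S \pm i\mu$ for $\mu$ large.

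First I would factor
\[
T + S \pm i\mu = \bigl(I + S(T \pm i\mu)^{-1}\bigr)(T \pm i\mu),
\]
observing that $(T \pm i\mu)\colon D(T) \to \mathfrak{H}$ is a bijection by self-adjointness of $T$. It then suffices to show that $\|S(T \pm i\mu)^{-1}\| < 1$ for $\mu$ sufficiently large, because a Neumann series argument will then make $I + S(T \pm i\mu)^{-1}$ invertible on $\mathfrak{H}$ and hence $T + S \pm i\mu$ a bijection $D(T) \to \mathfrak{H}$, as needed.

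The key estimate is obtained as follows. Given $\varphi \in \mathfrak{H}$, set $\psi := (T \pm i\mu)^{-1}\varphi \in D(T) \subset D(S)$; self-adjointness of $T$ yields the Pythagorean identity
\[
\|(T \pm i\mu)\psi\|^2 = \|T\psi\|^2 + \mu^2 \|\psi\|^2 = \|\varphi\|^2,
\]
so $\|T\psi\| \leq \|\varphi\|$ and $\|\psi\| \leq \mu^{-1}\|\varphi\|$. Inserting these bounds into the hypothesis \eqref{eq:-25-1} gives
\[
\|S(T \pm i\mu)^{-1}\varphi\|^2 \leq a\|\varphi\|^2 + \tfrac{b}{\mu^2}\|\varphi\|^2,
\]
hence $\|S(T \pm i\mu)^{-1}\| \leq \sqrt{a + b/\mu^{2}}$, which is $<1$ as soon as $\mu^2 > b/(1-a)$ since $a<1$.

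The only real subtlety lies in the essentially self-adjoint case. There I would first extend the relative bound to the closures: if $\varphi_n \in D(T)$ is Cauchy in the graph norm of $T$ then, by \eqref{eq:-25-1}, $(S\varphi_n)$ is Cauchy as well, which both establishes the inclusion $D(\overline{T}) \subset D(\overline{S})$ and propagates \eqref{eq:-25-1} to the pair $(\overline{T}, \overline{S})$. Applying the self-adjoint case already proved to $(\overline{T}, \overline{S})$ shows that $\overline{T} + \overline{S}$ is self-adjoint on $D(\overline{T})$; the identity $\overline{T + S} = \overline{T} + \overline{S}$ then follows because the left-hand side is a closed symmetric extension of $T + S$ contained in the self-adjoint operator on the right, forcing equality of graphs.
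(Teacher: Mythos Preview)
The paper does not prove this statement at all: it is quoted as the Kato--Rellich theorem from Teschl's textbook and used as a black box. Your argument is the standard textbook proof, and the self-adjoint case is carried out correctly.

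There is, however, a small but genuine gap in your last sentence for the essentially self-adjoint case. From $T+S \subset \overline{T}+\overline{S}$ and closedness of the right-hand side you correctly get $\overline{T+S} \subset \overline{T}+\overline{S}$, but the fact that the right-hand side is self-adjoint does \emph{not} by itself force equality: a self-adjoint operator can have many proper closed symmetric restrictions. What you actually need is the reverse inclusion $\overline{T}+\overline{S} \subset \overline{T+S}$, and this follows from the approximation argument you already set up: given $\varphi\in D(\overline{T})$, pick $\varphi_n\in D(T)$ with $\varphi_n\to\varphi$ and $T\varphi_n\to\overline{T}\varphi$; the relative bound then forces $S\varphi_n\to\overline{S}\varphi$, hence $(T+S)\varphi_n\to(\overline{T}+\overline{S})\varphi$, which places $(\varphi,(\overline{T}+\overline{S})\varphi)$ in the graph of $\overline{T+S}$. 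Equivalently, the relative bound makes the graph norms of $\overline{T}$ and of $\overline{T}+\overline{S}$ equivalent on $D(\overline{T})$, so $D(T)$ is a core for $\overline{T}+\overline{S}$. Either way, you should replace the ``forcing equality of graphs'' clause by this argument.
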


The combination of Theorem \ref{Kato's theorem } and Lemma \ref{existence uniqueness 1D} 
yields the following
conclusion (with the notation $\mathscr{H}_{N}=\mathscr{H}_{1,1,N}$):
\begin{lem}\label{Kato thm }
Let $X_{N}\in\mathbb{T}^{N}$, and let $(\tilde{V}_{X_{N}},\hat{V}_{X_{N}})$ be the solution to the system{\rm :}
\begin{align*}
\begin{cases}
- \tilde{V}_{X_{N}}''=\mu_{X_{N}}-1,\\[0.5mm]
-\hat{V}_{X_{N}}''=1-e^{\tilde{V}_{X_{N}}+\hat{V}_{X_{N}}},
\end{cases}
\end{align*}
guaranteed by {\rm Lemma \ref{existence uniqueness 1D}}. Then $\mathscr{H}_{N}$
is self-adjoint on $H^{2}(\mathbb{T}^{N})$. 
Consequently, if $R^{\mathrm{in}}_{N}\in\mathcal{D}_{s}(\mathfrak{H}^{\otimes N})$ is such that  
$\mathrm{tr}\left((-\Delta_{N})^{2}R^{\mathrm{in}}_{N}\right)<\infty$, 
then there exists a unique solution of the Cauchy problem
\begin{align}\label{eq:von N sec2}
\begin{cases}
i\partial_{t}R_{N}(t)=\left[\mathscr{H}_{N},R_{N}(t)\right],\\[1mm]
R_{N}(0)=R^\mathrm{in}_{N}.
\end{cases}
\end{align}
\end{lem}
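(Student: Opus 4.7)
The plan is to apply the Kato-Rellich theorem (Theorem 5.1 in the excerpt) with $T = \mathscr{K}_{N} = -\tfrac12\sum_k \Delta_{x_k}$ on $D(T)=H^2(\mathbb{T}^N)$ and $S = \mathscr{I}_{X_N} + \mathscr{V}_{X_N}$ interpreted as a multiplication operator in the variable $X_N \in \mathbb{T}^N$. The Laplacian on the torus is known to be essentially self-adjoint on $C^\infty(\mathbb{T}^N)$ with closure $H^2(\mathbb{T}^N)$, so the content of the proof is to verify the relative-boundedness hypothesis \eqref{eq:-25-1} for $S$. The key observation is that in this normalized setting ($\varepsilon=\hbar=1$) the potential term $S$ is in fact a \emph{bounded} multiplication operator on $L^2(\mathbb{T}^N)$, whence \eqref{eq:-25-1} holds trivially with arbitrarily small $a$ and $b=\|S\|_\infty^2$.

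To verify that $S$ is bounded as a function of $X_N$, I would proceed term by term. For $\mathscr{I}_{X_N} = N\int_\mathbb{T} V_{X_N}(x)\,m_{X_N}(x)\,\dd x$: Lemma \ref{existence uniqueness 1D} and Lemma \ref{ nonlinear interaction} (in the normalization $\varepsilon=1$) provide the $L^\infty$-bound $\|V_{X_N}\|_\infty \leq 1$, hence $\|m_{X_N}\|_\infty \leq e$; these together with $\int m_{X_N}=1$ yield a bound on $\mathscr{I}_{X_N}$ uniform in $X_N$. For $\mathscr{V}_{X_N}$, expanding the square produces
\[
\mathscr{V}_{X_N} = \tfrac{1}{2N}\sum_{i\neq j} K(x_i-x_j) - \int_\mathbb{T}(K\star m_{X_N})(x_i)\,\mu_{X_N}(\dd x_i) + \tfrac{N}{2}\iint K(x-y)m_{X_N}^{\otimes 2}(\dd x\dd y),
\]
where the diagonal contribution drops because $K(0)=\tfrac{0^2-|0|}{2}=0$. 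Since $K(x)=(x^2-|x|)/2$ is bounded and continuous on $\mathbb{T}$, and $m_{X_N}$ is uniformly $L^\infty$-bounded, each summand is uniformly bounded in $X_N$. Measurability (in fact continuity) in $X_N$ follows from Lemma \ref{Lip in configuration}. Hence $S$ acts as a bounded self-adjoint multiplication operator on $L^2(\mathbb{T}^N)$, Kato-Rellich applies, and $\mathscr{H}_N$ is self-adjoint on $H^2(\mathbb{T}^N)$.

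For the Cauchy problem \eqref{eq:von N sec2}, self-adjointness of $\mathscr{H}_N$ provides via Stone's theorem a strongly continuous unitary group $U(t)=e^{-it\mathscr{H}_N}$. Setting $R_N(t) \coloneqq U(t)R^{\mathrm{in}}_N U(t)^*$ produces a candidate solution that automatically preserves positivity, trace, and (since $\mathscr{H}_N$ commutes with permutations $U_\sigma$) the symmetry class $\mathcal{D}_s(\mathfrak{H}^{\otimes N})$. The regularity hypothesis $\mathrm{tr}((-\Delta_N)^2 R^{\mathrm{in}}_N)<\infty$ ensures that $R^{\mathrm{in}}_N$ maps into $D(\mathscr{H}_N) = H^2(\mathbb{T}^N)$ and that $\mathscr{H}_N R^{\mathrm{in}}_N$ is Hilbert--Schmidt, so the commutator $[\mathscr{H}_N,R_N(t)]$ is trace-class and the differential equation holds in the strong operator sense. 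Uniqueness follows by a standard Gr\"onwall argument on $\|R_N^{(1)}(t)-R_N^{(2)}(t)\|_{\mathrm{HS}}$ using skew-adjointness of $\tfrac{1}{i}[\mathscr{H}_N,\cdot\,]$.

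The only step where something could go wrong is the $L^\infty$-control of $\mathscr{V}_{X_N}$ near coincident configurations; this is precisely where the one-dimensional structure is used via $K(0)=0$. In higher dimensions this would be genuinely singular and would require an entirely different argument — but in the 1-D setting of Section 3 there is no actual obstruction, which is why the authors note that the result is a direct consequence of Kato-Rellich.
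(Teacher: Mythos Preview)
Your approach matches the paper's: establish that $\mathscr{I}_{X_N}+\mathscr{V}_{X_N}$ is an $L^\infty(\mathbb{T}^N)$ multiplication operator (so the Kato--Rellich bound holds with $a=0$), then invoke Stone's theorem for existence and linearity for uniqueness. One cosmetic slip: in your expansion of $\mathscr{V}_{X_N}$ the cross term should be $-N\int_{\mathbb{T}}(K\star m_{X_N})\,\dd\mu_{X_N}=-\sum_i(K\star m_{X_N})(x_i)$ rather than carrying the bare $\mu_{X_N}$ normalization, but since $\|K\|_\infty<\infty$ and $\|m_{X_N}\|_1=1$ this does not affect the uniform boundedness conclusion.
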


\begin{proof}
Recall the notation:
\begin{align*}
&\mathscr{V}_{X_{N}}\coloneqq \frac{N}{2}\int_{\mathbb{T}\times\mathbb{T}}K(x-y)(\mu_{X_{N}}-m_{X_{N}})^{\otimes2}(\dd x\dd y) \qquad\,\mbox{with $\,\, m_{X_{N}}=e^{V_{X_{N}}}$},\\
&\mathscr{I}_{X_{N}}=N\int_{\mathbb{T}}V_{X_{N}}(x)m_{X_{N}}(x)\,\dd x.
\end{align*}
Also, recall that $\| \tilde{V}_{X_{N}} \|_{\infty}\leq 1$. Therefore, by Lemma \ref{existence uniqueness 1D}, 
we see that $\left\Vert m_{X_{N}} \right\Vert_{\infty}\leq C$
for some effective constant $C>0$ so that
\begin{align*}
\left|\mathscr{V}_{X_{N}}\right|
&\leq \frac{N}{2}\left|\int_{\mathbb{T}} K\star(\mu_{X_{N}}-m_{X_{N}})(\mu_{X_{N}}-m_{X_{N}})(\dd x)\right|\\[1mm]
&\leq N\big(C+\left\Vert K\star m_{X_{N}}\right\Vert_{\infty} \left\Vert m_{X_{N}}\right\Vert_{1}\big)\\[1mm]
&\leq
N\big(C+\left\Vert K\right\Vert_{\infty} \left\Vert m_{X_{N}}\right\Vert_{1}^{2}\big)
=N(C+1),
\end{align*}
where $K$ stands for the Green function of the Laplacian on $\mathbb{T}$.
The estimate is uniform in $X_{N}$ so that 
$X_{N}\mapsto\mathscr{V}_{X_{N}}\in L^{\infty}(\mathbb{T}^{N})$. 
From the same considerations, $X_{N}\mapsto \mathscr{I}_{X_{N}}\in L^{\infty}(\mathbb{T}^{N})$.
That $\mathscr{H}_{N}$ is self-adjoint follows from Theorem \ref{Kato's theorem }. 
The existence is now immediate via Stone's theorem, and the uniqueness follows by linearity of the equation.
\end{proof}
Next, we are concerned with the existence and uniqueness theory for
system \eqref{eq:Hartree VPME}. 
As already remarked, the case of the Schr\"odinger-Poisson system \eqref{SP eq} with $-\Delta V=\rho-1$  
is classical; see, for instance, \cite{bove1974existence,ginibre1980class}. 
We plan to first decouple the equations and then apply a fixed-point argument. 
For this to succeed, we need  the following lemma, showing that $\nabla \hat{V}$ 
is stable with respect to $\tilde{V}$ in the $L^{2}$--norm, which is crucial
in order to be able to control properly the terms contributed by the
nonlinearity:
\begin{lem}[\cite{griffin2021global}, Lemma 3.9]
\label{=00007Bhat=00007DV vs =00007Bbar=00007DV}  
Let $d\in\{2,3\}$ and $h_{i}\in L^{\infty}(\mathbb{T}^{d})$
for $i=1,2$. Consider the system{\rm :}
\begin{align}\nonumber
\begin{cases}
-\Delta\tilde{V}_{i}=h_{i}-1,\\[0.5mm]
-\Delta\hat{V}_{i}=1-e^{\tilde{V}_{i}+\hat{V}_{i}}.
\end{cases}
\end{align}
Then 
\[
\| \nabla\hat{V}_{1}-\nabla\hat{V}_{2}\|_{2}^{2}\leq C\| \tilde{V}_{1}-\tilde{V}_{2}\| _{2}^{2},
\]
where 
$C=C\big(c_{d}(\underset{i}{\max}\Vert\tilde{V}_{i}\Vert_{\infty}+\underset{i}{\max}\Vert\hat{V}_{i}\Vert_{\infty})\big)$,
and $c_{d}$ is a dimensional constant.
\end{lem}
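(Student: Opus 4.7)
\smallskip

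The plan is to derive an elliptic equation for $W \coloneqq \hat{V}_{1}-\hat{V}_{2}$ by subtracting the two equations and test it against $W$ itself. Subtracting, we obtain
\[
-\Delta W = e^{\tilde{V}_{2}+\hat{V}_{2}}-e^{\tilde{V}_{1}+\hat{V}_{1}}.
\]
To linearize the right-hand side, I split the difference as
\[
e^{\tilde{V}_{2}+\hat{V}_{2}}-e^{\tilde{V}_{1}+\hat{V}_{1}}
= \bigl(e^{\tilde{V}_{2}+\hat{V}_{2}}-e^{\tilde{V}_{1}+\hat{V}_{2}}\bigr)
 +\bigl(e^{\tilde{V}_{1}+\hat{V}_{2}}-e^{\tilde{V}_{1}+\hat{V}_{1}}\bigr),
\]
and apply the mean value theorem to each term. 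This produces
\[
-\Delta W = A(x)(\tilde{V}_{2}-\tilde{V}_{1}) - B(x) W,
\]
where $A(x)=e^{\theta_{1}(x)+\hat{V}_{2}(x)}$ and $B(x)=e^{\tilde{V}_{1}(x)+\theta_{2}(x)}$ for some intermediate values $\theta_{1}$ (between $\tilde{V}_{1},\tilde{V}_{2}$) and $\theta_{2}$ (between $\hat{V}_{1},\hat{V}_{2}$). The crucial structural point is that $B(x)\geq 0$, so the linearized operator $-\Delta + B$ is a nonnegative perturbation of $-\Delta$.

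Next, I would multiply by $W$ and integrate over $\mathbb{T}^{d}$, obtaining
\[
\int_{\mathbb{T}^{d}} |\nabla W|^{2}\,\dd x + \int_{\mathbb{T}^{d}} B(x)\,W^{2}\,\dd x
= \int_{\mathbb{T}^{d}} A(x)\,(\tilde{V}_{2}-\tilde{V}_{1})\, W\,\dd x.
\]
Using the uniform $L^{\infty}$ bounds $\|\tilde{V}_{i}\|_{\infty},\|\hat{V}_{i}\|_{\infty}\leq M$, I get the lower bound $B(x)\geq e^{-2M}\eqqcolon c_{0}>0$ and the upper bound $|A(x)|\leq e^{2M}$. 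Thus the coercivity constant $c_{0}$ (from the zeroth order term) replaces the usual Poincaré inequality, which is important here since $W$ need not have zero mean.

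Finally, I would apply Cauchy-Schwarz followed by Young's inequality to the right-hand side:
\[
\int_{\mathbb{T}^{d}} A\,(\tilde{V}_{2}-\tilde{V}_{1})\,W\,\dd x
\leq \frac{c_{0}}{2}\|W\|_{2}^{2} + \frac{e^{4M}}{2c_{0}}\|\tilde{V}_{1}-\tilde{V}_{2}\|_{2}^{2}.
\]
Absorbing $\tfrac{c_{0}}{2}\|W\|_{2}^{2}$ into the left-hand side yields
\[
\|\nabla W\|_{2}^{2} + \frac{c_{0}}{2}\|W\|_{2}^{2} \leq \frac{e^{4M}}{2 c_{0}}\|\tilde{V}_{1}-\tilde{V}_{2}\|_{2}^{2},
\]
which is the desired estimate with $C = \tfrac{1}{2}e^{4M}/c_{0} = \tfrac{1}{2}e^{6M}$, a function of $\max_{i}\|\tilde{V}_{i}\|_{\infty}+\max_{i}\|\hat{V}_{i}\|_{\infty}$ as claimed. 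The only delicate point is the absence of an a priori Poincaré-type control on $W$, but this is precisely neutralized by the coercive lower bound $B(x)\geq c_{0}>0$ coming from the exponential nonlinearity and the $L^{\infty}$ a priori bounds furnished by Lemma \ref{existence uniqueness for hat{U}}.
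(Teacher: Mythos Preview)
The paper does not supply its own proof of this lemma; it is quoted directly from \cite{griffin2021global}, Lemma~3.9. Your argument is correct and is the natural energy-method approach one would expect: subtract, linearize the exponential, test against $W$, and exploit the strict positivity of the zeroth-order coefficient $B(x)\ge e^{-2M}$ to control $\|W\|_{2}$ without needing a Poincar\'e inequality. One minor technical remark: the pointwise intermediate values $\theta_{1}(x),\theta_{2}(x)$ obtained from the mean value theorem are not a priori measurable, but this is easily repaired by writing $e^{a}-e^{b}=(a-b)\int_{0}^{1}e^{b+s(a-b)}\,\dd s$, which yields measurable coefficients $A,B$ satisfying the same $L^{\infty}$ bounds you use.
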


As a corollary, we have the following lemma, which will be useful in several instances in the sequel:

\begin{lem}
For each $r\in (0,\frac{1}{4}]$, let $\chi_{r}\coloneqq\frac{1}{r^{d}}\chi(\frac{x}{r})$, where $\chi \geq 0$ is a smooth radially symmetric function on $\mathbb{T}^{d}$ with $\mathrm{supp}(\chi)\subset B(0,1)$ and $\int_{\mathbb{T}^{d}}\chi(x)\, \dd x=1$. 
With the same hypothesis and notation of {\rm Lemma \ref{=00007Bhat=00007DV vs =00007Bbar=00007DV}}, it holds that
\begin{align*}
\left\Vert \chi_{r}\star(V_{1}-V_{2})\right\Vert_{\infty}\leq C\left\Vert h_{1}-h_{2}\right\Vert_{1},
\end{align*}
where $C=C(r,d,\underset{i}{\max}\left\Vert h_{i}\right\Vert_{\infty})$.
\label{Linfty stability}
\end{lem}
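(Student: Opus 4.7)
The plan is to split $V_1 - V_2 = (\tilde V_1 - \tilde V_2) + (\hat V_1 - \hat V_2)$ and to control the $\chi_r$-mollification of each summand separately in $L^\infty$. The role of the mollifier is to regularize the locally integrable Green's function $G$ of $-\Delta$ on $\mathbb{T}^d$ and thereby upgrade the $L^1$-control on $h_1 - h_2$ to an $L^\infty$-control on the potentials.

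For the linear piece, $\tilde V_1 - \tilde V_2$ has zero mean and satisfies $-\Delta(\tilde V_1 - \tilde V_2) = h_1 - h_2$, so I would represent it as $\tilde V_1 - \tilde V_2 = G \star (h_1 - h_2)$, where $G$ denotes the zero-mean Green's function of $-\Delta$ on $\mathbb{T}^d$. Consequently, Young's convolution inequality yields
\[
\|\chi_r \star (\tilde V_1 - \tilde V_2)\|_\infty = \|(\chi_r \star G) \star (h_1 - h_2)\|_\infty \leq \|\chi_r \star G\|_\infty\,\|h_1-h_2\|_1.
\]
Since $G \in L^2(\mathbb{T}^d)$ for $d \in \{2,3\}$ (the pointwise singularity is $\log|x|$ or $|x|^{-1}$, both square integrable on bounded domains) and $\chi_r \in L^2(\mathbb{T}^d)$, the factor $\|\chi_r \star G\|_\infty$ is finite and depends only on $r$ and $d$.

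For the nonlinear piece, I would first establish the linear $L^2$-stability estimate $\|\hat V_1 - \hat V_2\|_2 \leq C \|\tilde V_1 - \tilde V_2\|_2$. Setting $W = \hat V_1 - \hat V_2$ and using the mean value theorem, the equation for $W$ reads
\[
-\Delta W + \rho^\ast W = -\rho^\ast (\tilde V_1 - \tilde V_2), \qquad \rho^\ast(x) := \int_0^1 e^{\tau V_1(x) + (1-\tau)V_2(x)}\, \dd\tau,
\]
with $0 < c \leq \rho^\ast \leq C$ pointwise, the constants depending only on $\max_i \|V_i\|_\infty$ and hence, via Lemma \ref{existence uniqueness for hat{U}}, only on $\max_i \|h_i\|_\infty$. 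Testing this equation against $W$ and invoking Cauchy--Schwarz produces the desired linear $L^2$-bound. Combining it with the Young inequality bound $\|\tilde V_1 - \tilde V_2\|_2 \leq \|G\|_2\,\|h_1-h_2\|_1$ and with the elementary estimate $\|\chi_r \star W\|_\infty \leq \|\chi_r\|_2\,\|W\|_2$ (again Cauchy--Schwarz) delivers
\[
\|\chi_r \star (\hat V_1 - \hat V_2)\|_\infty \leq C\,\|\chi_r\|_2\,\|G\|_2\,\|h_1 - h_2\|_1.
\]

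No genuine obstacle arises; the only care needed is in the bookkeeping of constants. The factors $\|\chi_r \star G\|_\infty$, $\|\chi_r\|_2$, and $\|G\|_2$ depend only on $r$ and $d$, while the coercivity constant coming from $\rho^\ast$ depends on $\max_i \|h_i\|_\infty$ via Lemma \ref{existence uniqueness for hat{U}}. Adding the two $L^\infty$-contributions produces the advertised estimate with $C = C(r, d, \max_i \|h_i\|_\infty)$.
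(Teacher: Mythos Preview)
Your proof is correct and follows essentially the same strategy as the paper: split $V_1-V_2$ into its linear and nonlinear parts, control $\chi_r\star(\tilde V_1-\tilde V_2)$ via $\|\chi_r\star G\|_\infty\|h_1-h_2\|_1$, and control the nonlinear piece via an $L^2$-stability estimate $\|\hat V_1-\hat V_2\|_2\lesssim\|\tilde V_1-\tilde V_2\|_2$ followed by a mollified $L^\infty$ upgrade. The only minor differences are that you re-derive the $L^2$-stability directly by testing the linearized equation for $W$ (exploiting the lower bound on $\rho^\ast$), whereas the paper invokes it as the cited Lemma~\ref{=00007Bhat=00007DV vs =00007Bbar=00007DV} plus Poincar\'e, and the paper's $L^\infty$ step for $\hat V$ passes through $\|K\|_2\|e^{V_1}-e^{V_2}\|_2$ rather than your $\|\chi_r\|_2\|W\|_2$.
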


\begin{proof}
Notice first that
\begin{align*}
\|\chi_{r}\star(V_{1}-V_{2}) \|_{\infty} \leq \|\chi_{r}\star (\tilde{V}_{1}- \tilde{V}_{2})  \|_{\infty}+
\| \chi_{r}\star(\hat{V}_{1}- \hat{V}_{2})  \|_{\infty}.
\end{align*}
We first have
\begin{align}\label{est for tildeV}
\| \chi_{r}\star(\tilde{V}_{1}- \tilde{V}_{2}) \|_{\infty}\leq \| \chi_{r}\star K\|_{\infty}\| h_{1}-h_{2}\|_{1}\leq C_{r,d}\| h_{1}-h_{2}\|_{1}.
\end{align}
Furthermore, Lemma \ref{existence uniqueness for hat{U}} implies that
\begin{align}
\|\chi_{r}\star(\hat{V}_{1}-\hat{V}_{2})\|_{\infty}^{2}&\leq \| K \|_{2}^{2}\| e^{\hat{V}_{1}+\tilde{V}_{1}}-e^{\hat{V}_{2}+\tilde{V}_{2}}\|_{2}^{2} \notag\\
&\leq \| K \|_{2}^{2}\| e^{\hat{V}_{1}+\tilde{V}_{1}}-e^{\hat{V}_{1}+\tilde{V}_{2}}\|_{2}^{2} 
+\| K \|_{2}^{2}\| e^{\hat{V}_{1}+\tilde{V}_{2}}-e^{\hat{V}_{2}+\tilde{V}_{2}}\|_{2}^{2} \notag\\
&\lesssim \| e^{\hat{V}_{1}}\|_{\infty}^{2} \| e^{\tilde{V}_{1}}-e^{\tilde{V}_{2}}\|_{2}^{2}+\| e^{\tilde{V}_{2}}\|_{\infty}^{2}\| e^{\hat{V}_{1}}-e^{\hat{V}_{2}}\|_{2}^{2} \notag\\
&\lesssim_{d,\underset{i}{\max}\| h_{i}\|_{\infty}} \| e^{\tilde{V}_{1}}-e^{\tilde{V}_{2}}\|_{2}^{2}+\| e^{\hat{V}_{1}}-e^{\hat{V}_{2}}\|_{2}^{2} \notag\\
&\lesssim_{d,\underset{i}{\max}\| h_{i}\|_{\infty}}  \| \tilde{V}_{1}-\tilde{V}_{2}\|_{2}^{2}+\| \hat{V}_{1}-\hat{V}_{2}\|_{2}^{2},
\label{eq:-16b}
\end{align}
where the last inequality in \eqref{eq:-16b} is due to the mean value theorem as applied
to the function: $x\mapsto e^{x}$. 
The first term on the
right-hand side of \eqref{eq:-16b} is
\begin{align*}
\| \tilde{V}_{1}-\tilde{V}_{2}\|_{2}^{2}\leq \|h_{1}-h_{2}\|_{1}^{2}.
\end{align*}
Therefore, by the Poincar\'{e} inequality and Lemma \ref{=00007Bhat=00007DV vs =00007Bbar=00007DV},
we have
\begin{align}
\|\hat{V}_{1}-\hat{V}_{2}\|_{2}^{2}&\lesssim_{d}\| \nabla\hat{V}_{1}-\nabla\hat{V}_{2}\|_{2}^{2} \notag\\
&\leq C(\underset{i}{\max}\|h_{i}\|_{\infty},d)\| \tilde{V}_{1}-\tilde{V}_{2}\|_{2}^{2}\leq  C(\underset{i}{\max}\| h_{i}\|_{\infty},d)\| h_{1}-h_{2}\|_{1}^{2}.\label{est for hatV}
\end{align}
Gathering \eqref{est for tildeV}--\eqref{est for hatV} yields the desired inequality.
\end{proof}

\medskip
We need also to observe that the $L^2$--norm of $e^{V}$ is bounded by means of the $L^2$--norm of $\rho$, 
as encapsulated in the following lemma:
\begin{lem} \label{L^2 bound o e^V}
Let $\rho \in L^{\infty}(\mathbb{T}^{d}),$ and let $V$ be the unique solution of
\begin{align*}
- \Delta V(x)=\rho(x)-e^{V(x)}.
\end{align*}
Then
\begin{align*}
    \Vert e^{V} \Vert_{2}\leq \left\Vert \rho \right\Vert_{2}.
\end{align*}
\end{lem}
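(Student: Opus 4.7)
The plan is to multiply the elliptic equation by $e^V$ and integrate over $\mathbb{T}^d$, exploiting the sign of the resulting gradient term. By the existence theory of Lemma \ref{existence uniqueness for hat{U}} (and Remark 2.5), since $\rho \in L^\infty(\mathbb{T}^d)$, the solution $V = \tilde V + \hat V$ lies in $L^\infty(\mathbb{T}^d)$, so $e^V \in L^\infty(\mathbb{T}^d) \subset L^2(\mathbb{T}^d)$ and every manipulation below is fully justified.

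Testing $-\Delta V + e^V = \rho$ against $e^V$ and integrating by parts on the torus (no boundary terms), one obtains
\begin{equation*}
\int_{\mathbb{T}^d} e^V |\nabla V|^2 \, \dd x + \int_{\mathbb{T}^d} e^{2V} \, \dd x = \int_{\mathbb{T}^d} \rho\, e^V \, \dd x,
\end{equation*}
where the first term arises from $\nabla e^V = e^V \nabla V$. Since $e^V > 0$, the first integral is non-negative, hence
\begin{equation*}
\|e^V\|_2^2 \leq \int_{\mathbb{T}^d} \rho\, e^V \, \dd x.
\end{equation*}

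Applying the Cauchy–Schwarz inequality on the right-hand side gives $\|e^V\|_2^2 \leq \|\rho\|_2 \|e^V\|_2$, and dividing by $\|e^V\|_2 > 0$ yields the claim. The only subtle point worth double-checking is the validity of the integration by parts, which is immediate once one knows $V \in H^1(\mathbb{T}^d) \cap L^\infty(\mathbb{T}^d)$; this is precisely what the elliptic theory recalled earlier in the section provides. There is no real obstacle here beyond ensuring the regularity sufficient to test with $e^V$.
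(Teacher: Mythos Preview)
Your proof is correct and follows essentially the same approach as the paper: multiply the equation by $e^{V}$, integrate by parts to obtain $\int_{\mathbb{T}^d} e^{V}|\nabla V|^2\,\dd x + \|e^{V}\|_2^2 = \int_{\mathbb{T}^d}\rho\, e^{V}\,\dd x$, drop the non-negative gradient term, and apply Cauchy--Schwarz. Your added remark on the regularity needed to justify testing with $e^{V}$ is a welcome clarification that the paper leaves implicit.
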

\begin{proof} Multiplying the equation by $e^{V}$ and then integrating, we have
\begin{align*}
- \int_{\mathbb{T}^{d}}\Delta V(x)e^{V(x)}\,\dd x=\int_{\mathbb{T}^{d}}\big(\rho(x)e^{V(x)}-e^{2V(x)}\big)\,\dd x.  \end{align*}
Integrating  the left-hand side by parts, we recognize that
\begin{align*}
-\varepsilon \int_{\mathbb{T}^{d}}\Delta V(x)e^{V(x)}\,\dd x= \int_{\mathbb{T}^{d}} \left\vert \nabla V(x)\right\vert^{2}{e^{V(x)}}\,\dd x\geq0.
\end{align*}
Therefore, it follows that
\begin{align*}
\int_{\mathbb{T}^{d}}\rho(x)e^{V(x)}\,\dd x \geq \int_{\mathbb{T}^{d}}e^{2V(x)}\,\dd x.
\end{align*}
Using the Cauchy-Schwartz inequality, we deduce
\begin{align*}
 \Vert e^{V}\Vert_{2}^{2}\leq   \Vert e^{V}\Vert_{2} \Vert \rho\Vert_{2}.
\end{align*}
This completes the proof.
\end{proof}

\medskip
To prove the well-posedness of the Schr\"odinger-Poisson-Boltzmann system, we first study the Cauchy problem of the regularized equation. We recall the following existence and uniqueness result for a Schr\"odinger equation with a time-dependent potential (one can also consult \cite{reed2003methods} for similar results).

\begin{lem}[\cite{fujiwara1979construction}, Theorem 2]
Let $V:[0,T]\times \mathbb{T}^{d}\rightarrow \mathbb{R}$ be a function such that
\begin{enumerate}
    \item [\rm (i)] $V\in C([0,T]; C^{\infty}(\mathbb{T}^{d}))${\rm ,}

\smallskip
    \item[\rm (ii)] $\partial^{\alpha}_{x}V\in L^{\infty}([0,T];C(\mathbb{T}^{d}))$ for all $\alpha \in \mathbb{N}$.
\end{enumerate} 
Let $\psi^{\mathrm{in}}\in C^{\infty}(\mathbb{T}^{d})$. Then the Cauchy problem
\begin{align*}
\begin{cases}
i\partial_{t}\psi(t,x)=-\frac{1}{2}\Delta\psi(t,x)+V(t,x)\psi(t,x),\quad & x\in \mathbb{T}^d,\,t>0,\nonumber\\[0.5mm]
\psi|_{t=0}=\psi^{\mathrm{in}},& x\in \mathbb{T}^d,
\end{cases}
\end{align*}
has a unique solution $\psi \in C^{1}([0,T];C^{\infty}(\mathbb{T}^{d}))$.  \label{well posed linear eq}
\end{lem}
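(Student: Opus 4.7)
The plan is to combine an energy method in Sobolev spaces with a Fourier-Galerkin approximation, proving existence, uniqueness, and $C^\infty$ regularity simultaneously.

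First I would establish the a priori estimates. Pairing the equation with $\overline{\psi}$ in $L^{2}(\mathbb{T}^{d})$ and taking the imaginary part yields $\frac{\dd}{\dd t}\|\psi(t)\|_{L^{2}}^{2}=0$, which is both mass conservation and, by linearity applied to the difference of two solutions, uniqueness. For higher regularity, for each multi-index $\alpha$ I would differentiate the equation to obtain $i\partial_{t}\partial^{\alpha}\psi=-\tfrac{1}{2}\Delta\partial^{\alpha}\psi+V\partial^{\alpha}\psi+[\partial^{\alpha},V]\psi$, pair with $\partial^{\alpha}\psi$, and take imaginary parts to deduce $\frac{\dd}{\dd t}\|\partial^{\alpha}\psi\|_{L^{2}}^{2}\leq 2\|[\partial^{\alpha},V]\psi\|_{L^{2}}\|\partial^{\alpha}\psi\|_{L^{2}}$. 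The Leibniz rule writes $[\partial^{\alpha},V]\psi$ as a linear combination of terms $(\partial^{\beta}V)(\partial^{\gamma}\psi)$ with $|\beta|\leq|\alpha|$ and $|\gamma|<|\alpha|$, so assumption (ii) yields $\|[\partial^{\alpha},V]\psi\|_{L^{2}}\leq C_{\alpha}\|\psi\|_{H^{|\alpha|}}$ with $C_{\alpha}$ depending only on $\max_{|\beta|\leq|\alpha|}\|\partial^{\beta}V\|_{L^{\infty}([0,T]\times\mathbb{T}^{d})}$. Summing over $|\alpha|\leq k$ and invoking Gr\"onwall gives $\|\psi(t)\|_{H^{k}}\leq e^{C_{k}T}\|\psi^{\mathrm{in}}\|_{H^{k}}$ for every $k\geq 0$.

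Next I would construct approximate solutions by Fourier-Galerkin truncation. Let $P_{N}$ be the orthogonal projection onto Fourier modes of frequency at most $N$, and consider the linear ODE $i\partial_{t}\psi_{N}=-\tfrac{1}{2}\Delta\psi_{N}+P_{N}(V\psi_{N})$ with $\psi_{N}(0)=P_{N}\psi^{\mathrm{in}}$ on the finite-dimensional space $\mathcal{V}_{N}:=\mathrm{range}(P_{N})$. Continuity of $V$ in $t$ (assumption (i)) makes the coefficients continuous in $t$, so Picard-Lindel\"of gives a unique global solution on $[0,T]$. Since $P_{N}$ commutes with $\Delta$ (and hence with every $\partial^{\alpha}$) and is self-adjoint, testing the Galerkin equation with $\partial^{\alpha}\psi_{N}\in\mathcal{V}_{N}$ reproduces the energy identity verbatim and delivers the uniform-in-$N$ bound $\|\psi_{N}(t)\|_{H^{k}}\leq e^{C_{k}T}\|\psi^{\mathrm{in}}\|_{H^{k}}$. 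Then I would pass to the limit: Banach-Alaoglu extracts a weak-$\ast$ limit $\psi_{N_{j}}\rightharpoonup\psi$ in $L^{\infty}([0,T];H^{k})$ for each $k$, and the uniform bound on $\partial_{t}\psi_{N}$ in $L^{\infty}([0,T];H^{k-2})$ inherited from the equation upgrades this via Aubin-Lions to strong convergence in $C([0,T];H^{k-1})$. Passage to the limit in the Galerkin equation is trivial by linearity (using that $P_{N}\varphi\to\varphi$ strongly for test functions), so $\psi\in L^{\infty}_{t}H^{k}_{x}\cap C_{t}H^{k-1}_{x}$ for every $k$; Sobolev embedding gives $\psi\in C([0,T];C^{\infty}(\mathbb{T}^{d}))$, and reading the equation once more gives $\partial_{t}\psi=\tfrac{i}{2}\Delta\psi-iV\psi\in C([0,T];H^{k})$ for all $k$, whence $\psi\in C^{1}([0,T];C^{\infty}(\mathbb{T}^{d}))$.

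The main technical item requiring care is the commutator bookkeeping: one must verify that every spatial derivative of $V$ arising from Leibniz is controlled by assumption (ii), and that the projection $P_{N}$ does not disrupt the cancellations in the Galerkin energy identity, which holds because $\partial^{\alpha}\psi_{N}$ itself lies in $\mathcal{V}_{N}$ and $P_{N}$ is self-adjoint. Beyond this the argument is entirely routine; Fujiwara's original construction in \cite{fujiwara1979construction} provides substantially more (a WKB-type Feynman path integral representation of the propagator), but only the output stated above is needed in the sequel.
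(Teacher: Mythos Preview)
The paper does not prove this lemma at all; it is quoted verbatim as Theorem~2 of \cite{fujiwara1979construction} and used as a black box. Your energy-method/Galerkin argument is therefore not a comparison against the paper's proof but an independent, self-contained substitute for the cited reference.

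That said, your route is genuinely different from Fujiwara's. His construction builds the propagator as an oscillatory integral (a rigorous Feynman path-integral representation), which yields far more than bare existence---pointwise asymptotics of the kernel, for instance. Your approach trades this for elementarity: the commutator bound $\|[\partial^{\alpha},V]\psi\|_{L^{2}}\leq C_{\alpha}\|\psi\|_{H^{|\alpha|}}$ together with the observation that $\partial^{\alpha}$ preserves $\mathcal{V}_{N}$ and $P_{N}$ is self-adjoint makes the Galerkin energy identity go through without loss, and the rest is standard compactness. For the purposes of the present paper, which only needs the existence/uniqueness statement to feed into the fixed-point argument of Lemma~\ref{mollified eq well posed}, your argument is entirely adequate and arguably preferable, since it keeps the exposition self-contained. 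The proof is correct as written; the only cosmetic remark is that the commutator actually controls by $\|\psi\|_{H^{|\alpha|-1}}$, not $H^{|\alpha|}$, but the weaker bound you state suffices for Gr\"onwall.
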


\begin{lem}
Let $\psi^{\mathrm{in}}\in C^{\infty}(\mathbb{T}^{d})$ 
with $\int_{\mathbb{T}^{d}}\left|\psi^{\mathrm{in}}(x)\right|^{2}\ \dd x=1$. 
For each $r\in (0,\frac{1}{4}]$, let $\chi_{r}\coloneqq\frac{1}{r^{d}}\chi(\frac{x}{r})$, 
where $\chi \geq 0$ is a smooth radially symmetric function on $\mathbb{T}^{d}$ with $\mathrm{supp}(\chi)\subset B(0,1)$ and $\int_{\mathbb{T}^{d}}\chi(x)\ \dd x=1$.
Then, for any $T>0$ and $r\in(0,\frac{1}{4}]$ fixed, there exist a unique solution  $\psi_{r}\in C^{1}([0,T]; C^{\infty}(\mathbb{T}^{d}))$ to the regularized Cauchy problem{\rm :}
\begin{equation}\label{r mollified eq}
\begin{cases}
i\partial_{t}\psi_{r}(t,x)=-\frac{1}{2}\Delta\psi_{r}(t,x)+(\chi_{r}\star V_{r})(t,x)\psi_{r}(t,x),& x\in \mathbb{T}^d,\,t>0,
\\[0.5mm]
-\Delta V_{r}(t,x)=(\chi_{r}\star\left|\psi_{r}\right|^{2})(t,x)-e^{V_{r}(t,x)},& x\in \mathbb{T}^d,\,t>0,\\[0.5mm]
\psi_{r}|_{t=0}=\psi^{\mathrm{in}},& x\in \mathbb{T}^d.
\end{cases}
\end{equation}
\label{mollified eq well posed}
\end{lem}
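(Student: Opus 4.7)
The plan is to realize $\psi_r$ as the unique fixed point of a Picard-type map on the complete metric space
\begin{equation*}
\mathcal{X}_{T_0} := \big\{\phi \in C([0,T_0]; L^{2}(\mathbb{T}^d)) : \|\phi(t)\|_{2}=1 \text{ for all } t\in[0,T_0]\big\},
\end{equation*}
endowed with the metric $d(\phi_1,\phi_2) := \sup_{t\in[0,T_0]}\|\phi_1(t) - \phi_2(t)\|_2$, for a sufficiently small $T_0>0$ to be chosen below. Given $\phi \in \mathcal{X}_{T_0}$, the regularized source $h[\phi](t,\cdot) := (\chi_r\star|\phi|^2)(t,\cdot)$ is smooth in $x$ with $\|h[\phi](t,\cdot)\|_\infty \leq \|\chi_r\|_\infty$ uniformly in $(t,\phi)$, so Lemma \ref{existence uniqueness for hat{U}} together with Remark \ref{smoothness of V} produces a unique $V[\phi](t,\cdot)\in C^\infty(\mathbb{T}^d)$ solving $-\Delta V = h[\phi] - e^V$, with every $C^{k,\alpha}$ norm bounded uniformly in $(t,\phi)$. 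Consequently $W[\phi] := \chi_r \star V[\phi]$ satisfies the hypotheses of Lemma \ref{well posed linear eq}, and one can define $\Phi(\phi) := \tilde{\psi} \in C^{1}([0,T_0]; C^\infty(\mathbb{T}^d))$ as the unique solution of the linear Schr\"odinger equation with real potential $W[\phi]$ and datum $\psi^{\mathrm{in}}$. Since $W[\phi]$ is real, $\Phi(\phi)$ preserves the $L^{2}$ norm, so $\Phi$ maps $\mathcal{X}_{T_0}$ into itself.

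For the contraction estimate, let $\phi_1,\phi_2\in\mathcal{X}_{T_0}$ and set $w:=\Phi(\phi_1)-\Phi(\phi_2)$. A direct energy computation on the linear Schr\"odinger equation yields
\begin{equation*}
\tfrac{d}{dt}\|w(t)\|_2^2 \;=\; -2\,\mathrm{Im}\!\int_{\mathbb{T}^d}\!\bigl(W[\phi_1]-W[\phi_2]\bigr)(t,x)\,\Phi(\phi_2)(t,x)\,\overline{w(t,x)}\,\dd x,
\end{equation*}
whence $\tfrac{d}{dt}\|w(t)\|_2 \leq \|W[\phi_1](t)-W[\phi_2](t)\|_\infty$. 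The crucial step is now Lemma \ref{Linfty stability}, which upgrades the $L^{1}$ stability of the source to $L^{\infty}$ stability of $\chi_r \star V$; combined with the elementary bound $\bigl\||\phi_1|^2-|\phi_2|^2\bigr\|_{1}\leq 2\|\phi_1-\phi_2\|_2$ it gives
\begin{equation*}
\|W[\phi_1](t)-W[\phi_2](t)\|_\infty \;\leq\; 2C_r\,\|\phi_1(t)-\phi_2(t)\|_{2},
\end{equation*}
for a constant $C_r$ depending only on $r$, $d$ and $\|\chi_r\|_\infty$. Integrating in time produces $d(\Phi(\phi_1),\Phi(\phi_2))\leq 2C_r T_0\, d(\phi_1,\phi_2)$, so $\Phi$ is a strict contraction as soon as $T_0 < 1/(2C_r)$ and therefore admits a unique fixed point $\psi_r$ on $[0,T_0]$. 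Because conservation of the $L^{2}$ norm makes $C_r$ independent of the initial time, the same $T_0$ can be reused to iterate the construction and cover any prescribed interval $[0,T]$, and uniqueness on the whole interval follows from the same Gr\"onwall-type estimate.

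The regularity $\psi_r \in C^{1}([0,T]; C^\infty(\mathbb{T}^d))$ is finally inherited from Lemma \ref{well posed linear eq} applied once to the already-constructed $\psi_r$ with potential $W[\psi_r]$: spatial smoothness with bounds uniform in $t$ follows from Remark \ref{smoothness of V} applied to the bounded smooth source $\chi_r\star|\psi_r|^2$, while continuity in $t$ of $W[\psi_r]$ and of all its spatial derivatives follows from the Lipschitz estimate above, combined with $\psi_r \in C([0,T];L^{2}(\mathbb{T}^d))$ and the smoothness of $\chi_r$. The main obstacle I expect is precisely this verification of Fujiwara's hypotheses, and it is the reason the mollifier $\chi_r$ is essential at this step: the bare Poisson-Boltzmann bounds of Lemma \ref{existence uniqueness for hat{U}} only yield limited H\"older regularity of $V$ in terms of its source (in three dimensions only up to $C^{2,\alpha}$ with $\alpha\leq 1/5$), whereas Lemma \ref{well posed linear eq} demands control of \emph{every} spatial derivative of the Schr\"odinger potential uniformly in $t$; this is precisely what convolving $V$ with $\chi_r$ is for, and correspondingly why the constant $C_r$ in the contraction estimate blows up as $r\to 0$.
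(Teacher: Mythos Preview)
Your argument is correct and in fact streamlines the paper's proof. Both proofs run a Banach fixed-point argument on the same metric space $\{\phi\in C([0,T_0];L^2):\|\phi(t)\|_2=1\}$, use Lemma~\ref{well posed linear eq} to solve the linear problem for the map $\Phi$, and invoke Lemma~\ref{Linfty stability} to obtain the contraction estimate. The substantive difference is that the paper interposes an additional time mollifier $\zeta_\delta$, first proving well-posedness of the doubly regularized system~\eqref{time reg eq}, then propagating $H^4$ bounds uniformly in $\delta$, and finally passing to the limit $\delta\to0$ via Arzel\`a--Ascoli. You bypass this layer entirely by observing that, already for $\phi\in C([0,T_0];L^2)$, the source $\chi_r\star|\phi|^2$ lies in $C([0,T_0];C^\infty)$ and hence (via the $L^1\!\to\!L^\infty$ stability of Lemma~\ref{Linfty stability}, pushed through the extra convolution with $\partial^\alpha\chi_r$) the potential $W[\phi]=\chi_r\star V[\phi]$ already satisfies both hypotheses of Lemma~\ref{well posed linear eq}. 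The paper's time mollification secures Lipschitz-in-$t$ regularity of $W$, but Lemma~\ref{well posed linear eq} as stated only asks for continuity, which your argument supplies directly. Your route is therefore shorter; the paper's buys nothing extra here, though it would be the safer choice if one were citing a version of Fujiwara's theorem requiring more time regularity than mere continuity. One small redundancy: since $\Phi(\phi)$ is \emph{defined} to be the $C^1([0,T];C^\infty)$ solution furnished by Lemma~\ref{well posed linear eq}, the fixed point $\psi_r=\Phi(\psi_r)$ already inherits this regularity without a separate bootstrap.
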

\begin{proof}
 We first regularize the equation in time and study the well-posedness of the resulting equation via a fixed-point argument. Afterwards, we apply a compactness argument with respect to the mollification in time. 
 We divide the proof into three steps:

 \medskip
\textbf{1}. \textit{Well-posedness for the time regularized system}.  Set
\begin{align*}
\mathcal{\mathfrak{X}}\coloneqq\left\{ \psi\in C([0,T];L^{2}(\mathbb{T}^{d}))\,:\,
\left\Vert \psi(t,\cdot)\right\Vert _{2}=1 \text{ for any } t\in[0,T]\right\} .
\end{align*}
Fix a standard mollifier $\zeta_{\delta}$ on $\mathbb{R}$. In this step, we prove that the time regularized system:
\begin{equation}\label{time reg eq}
\begin{cases}
i\partial_{t}\psi_{r,\delta}(t,x)=-\frac{1}{2}\Delta\psi_{r,\delta}(t,x)+(\chi_{r}\star_{x} V
_{r,\delta})(t,x)\psi_{r,\delta}(t,x),\quad& x\in \mathbb{T}^d,\,t>0,\\[0.5mm]
-\Delta V_{r,\delta}(t,x)=(\zeta_{\delta}\star_{t}\chi_{r}\star_{x}\left|\psi_{r,\delta}\right|^{2})(t,x)-e^{V_{r,\delta}(t,x)},& x\in \mathbb{T}^d,\,t>0,\\[0.5mm]
\psi_{r,\delta}|_{t=0}=\psi^{\mathrm{in}},& x\in \mathbb{T}^d
\end{cases}
\end{equation}
is well posed.   For each fixed $\phi\in\mathfrak{X},$ we consider the linear Schr{\"o}dinger
equation with time dependent interaction
\begin{equation}
\begin{cases}
i\partial_{t}\psi_{r, \delta}(t,x)=-\frac{1}{2}\Delta\psi_{r, \delta}(t,x)
+(\chi_{r}\star_{x}V_{r,\delta})(t,x)\psi_{r,\delta}(t,x),\quad& x\in \mathbb{T}^d,\,t>0,\\
-\Delta V_{r,\delta}(t,x)=(\zeta_{\delta}\star_{t}\chi_{r}\star_{x}\left|\phi\right|^{2})(t,x)
  -e^{V_{r,\delta}(t,x)},& x\in \mathbb{T}^d,\,t>0,\\
\psi_{r,\delta}|_{t=0}=\psi^{\mathrm{in}},& x\in \mathbb{T}^d.
\end{cases}
\end{equation}
When there is no ambiguity, we omit $t$ and $x$ from $\star_{t}$ and $\star_{x}$, respectively. 
By Lemma \ref{existence uniqueness for hat{U}}, for each $t\in(0,T]$,
there is a unique solution $V_{r,\delta}(t,\cdot)\in C^{2,\alpha}(\mathbb{T}^{d})$
to the problem
\[
-\Delta V_{r,\delta}\left[\phi\right](t,x)=(\zeta_{\delta}\star \chi_{r}\star\left|\phi\right|^{2})(t,x)-e^{V_{r,\delta}\left[\phi\right](t,x)}.
\]
By Remark \ref{smoothness of V}, $\chi_{r}\star V$ verifies assumption (ii) of Theorem \ref{well posed linear eq}. 
In addition, we notice that $\chi_{r}\star V_{r,\delta}$ is Lipschitz in time due to Lemma \ref{Linfty stability} applied with $h_{1}(t,\cdot)=(\zeta_{\delta}\star\chi_{r}\star\left\vert\psi_{r,\delta}\right\vert^{2})(t,\cdot)$ and $h_{2}(s,\cdot)=(\zeta_{\delta}\star\chi_{r}\star\left\vert\psi_{r,\delta}\right\vert^{2})(s,\cdot)$ for any $t,s \in [0,T]$.
Thus, assumption (i) of Theorem \ref{well posed linear eq} is also satisfied.  
Then Theorem \ref{well posed linear eq} implies the existence and uniqueness of a solution $\Psi_{r,\delta}\in C^{1}\left([0,T];C^{\infty}(\mathbb{T}^{d})\right)$ of the Cauchy problem for the linear equation:
\begin{align}\nonumber
\begin{cases}
i\partial_{t}\psi_{r,\delta}(t,x)=-\frac{1}{2}\Delta\psi_{r,\delta}(t,x)+ (\chi_{r}\star V_{r,\delta}\left[\phi\right])(t,x)\psi_{r,\delta}(t,x),\quad& x\in \mathbb{T}^d,\,t>0,\nonumber\\[0.5mm]
\psi_{r,\delta}|_{t=0}=\psi^{\mathrm{in}},& x\in \mathbb{T}^d.
\end{cases}
\end{align}
We aim to prove that the operator: $\phi \mapsto \Psi_{r,\delta}[\phi]$ is a contraction from $\mathfrak{X}$ to $\mathfrak{X}$ and thereby conclude via the
Banach fixed point theorem. 
Since $\chi_{r}\ast V_{r,\delta}[\phi]$ is real-valued, it is readily checked that the $L^{2}$--norm 
of $\Psi_{r,\delta}[\phi]$ is conserved, {\it i.e.,} $\left\Vert \Psi_{r,\delta}[\phi](t,\cdot)\right\Vert_{2}=1 $, 
so that $\Psi_{r,\delta}[\phi]\in \mathfrak{X}$ indeed. 
Given $\phi_{1},\phi_{2}\in \mathfrak{X}$, set $\Psi_{1}\coloneqq\Psi_{r,\delta}\left[\phi_{1}\right],\Psi_{2}\coloneqq\Psi_{r,\delta}\left[\phi_{2}\right]$,
and $V_{1}\coloneqq V_{r,\delta}\left[ \phi_{1}\right],V_{2}\coloneqq V_{r,\delta}\left[ \phi_{2}\right]$.
We compute
\begin{align*}
&\frac{\dd}{\dd t}\left\Vert \Psi_{1}(t,\cdot)-\Psi_{2}(t,\cdot)\right\Vert _{2}^{2}\nonumber\\
&=2\Re\Big(-i\int_{\mathbb{T}^{d}}\partial_{t}\big(\Psi_{1}(t,x)-\Psi_{2}(t,x)\big)\overline{\big(\Psi_{1}(t,x)-\Psi_{2}(t,x)\big)}\,\dd x\Big)\\
&=\Re\Big(-i\int_{\mathbb{T}^{d}}\Delta\big(\Psi_{2}(t,x)-\Psi_{1}(t,x)\big)\overline{\big(\Psi_{1}(t,x)-\Psi_{2}(t,x)\big)}\,\dd x\Big)
\\&\quad+2\Re\Big(-i\int_{\mathbb{T}^{d}}\big((\chi_{r}\star V_{1})(t,x)\Psi_{1}(t,x)-(\chi_{r}\star V_{2})(t,x)\Psi_{2}(t,x)\big)\overline{\big(\Psi_{1}(t,x)-\Psi_{2}(t,x)\big)}\,\dd x\Big)\\&=: I+J.
\end{align*}
Integrating by parts, we see that

\[
-\int_{\mathbb{T}^{d}}\Delta\big(\Psi_{2}(t,x)-\Psi_{1}(t,x)\big)\overline{\big(\Psi_{1}(t,x)-\Psi_{2}(t,x)\big)}\,\dd x=\int_{\mathbb{T}^{d}}\left|\nabla(\Psi_{1}-\Psi_{2})\right|^{2}(t,x)\,\dd x,
\]
so that

\[
I=\Re\Big(i\int_{\mathbb{T}^{d}}\left|\nabla(\Psi_{1}-\Psi_{2})\right|^{2}(t,x)\,\dd x\Big)=0.
\]
Furthermore, we have
\begin{align}\label{2.82}
\int_{\mathbb{T}^{d}}&\big((\chi_{r}\star V_{1})(t,x)\Psi_{1}(t,x)-(\chi_{r}\star V_{2})(t,x)\Psi_{2}(t,x)\big)\overline{\big(\Psi_{1}(t,x)-\Psi_{2}(t,x)\big)}\,\dd x\nonumber\\
=&\int_{\mathbb{T}^{d}}(\chi_{r}\star V_{1})(t,x)\big(\Psi_{1}(t,x)-\Psi_{2}(t,x)\big)\overline{\big(\Psi_{1}(t,x)-\Psi_{2}(t,x)\big)}\,\dd x\nonumber\\
&+\int_{\mathbb{T}^{d}}\Psi_{2}(t,x)(\chi_{r}\star(V_{1}-V_{2}))(t,x)\,\overline{\big(\Psi_{1}(t,x)-\Psi_{2}(t,x)\big)}\,\dd x\nonumber\\
=&\int_{\mathbb{T}^{d}}(\chi_{r}\star V_{1})(t,x)\left|\Psi_{1}(t,x)-\Psi_{2}(t,x)\right|^{2}\,\dd x\nonumber\\
&+\int_{\mathbb{T}^{d}}\Psi_{2}(t,x)(\chi_{r}\star(V_{1}-V_{2}))(t,x)\,\overline{\big(\Psi_{1}(t,x)-\Psi_{2}(t,x)\big)}\,\dd x.
\end{align}
Clearly, one has
\[
\Re\Big(i\int_{\mathbb{T}^{d}}V_{1}(t,x)\left|\Psi_{1}(t,x)-\Psi_{2}(t,x)\right|^{2}\,\dd x\Big)=0,
\]
so it suffices to deal with the second integral on the right-hand side of \eqref{2.82}. 

Notice that
\begin{align}
&\int_{\mathbb{T}^{d}}\Psi_{2}(t,x)(\chi_{r}\star(V_{1}-V_{2}))(t,x)\overline{\big(\Psi_{1}(t,x)-\Psi_{2}(t,x)\big)}\,\dd x \notag \\
&\leq\frac{1}{2}\int_{\mathbb{T}^{d}}\left|\Psi_{2}\right|^{2}(t,x)\left|\chi_{r}\star(V_{1}-V_{2})\right|^{2}(t,x)\,\dd x+\frac{1}{2}\int_{\mathbb{T}^{d}}\left|\overline{\Psi_{1}(t,x)-\Psi_{2}(t,x)}\right|^{2}\,\dd x.
\label{eq:-17}
\end{align}
In order to control the first term on the right-hand side of (\ref{eq:-17}),
we invoke Lemma \ref{Linfty stability} with $h_{1}=(\zeta_{\delta}\star \chi_{r}\star\left|\phi_{1}\right|^{2})(t,x)$ and $h_{2}=(\zeta_{\delta}\star \chi_{r}\star\left|\phi_{2}\right|^{2})(t,x)$ to see that
\begin{align*}
\left\Vert(\chi_{r}\star (V_{1}-V_{2}))(t,\cdot)\right\Vert^{2}_{\infty}&\leq   C_{r,d}\underset{t \in [0,T]}{\sup}\left\Vert\left\vert \phi_{1}(t,\cdot)-\phi_{2}(t,\cdot)\right\vert (\left\vert  \phi_{1}(t,\cdot)\right\vert +\left\vert  \phi_{2}(t,\cdot)\right\vert)\right\Vert_{1}^{2}\\
&\leq C_{r,d}\underset{t \in [0,T]}{\sup}\left\Vert \phi_{1}(t,\cdot)-\phi_{2}(t,\cdot)\right\Vert_{2}^{2}.
\end{align*}
We have thus proved
\begin{align}
\int_{\mathbb{T}^{d}}\left|\Psi_{2}\right|^{2}(t,x)\left|\chi_{r}\star(V_{1}-V_{2})\right|^{2}(t,x)\,\dd x\leq C_{r,d}\underset{t \in [0,T]}{\sup}\left\Vert \phi_{1}(t,\cdot)-\phi_{2}(t,\cdot)\right\Vert _{2}^{2} \label{first term}
\end{align}
for some constant $C_{r,d}>0.$ Inequalities \eqref{eq:-17} and \eqref{first term} entail
\begin{align*}
\left\Vert \Psi_{1}(t,\cdot)-\Psi_{2}(t,\cdot)\right\Vert _{2}^{2}&\leq C_{r,d}t\underset{\tau \in [0,T]}{\sup}\left\Vert \phi_{1}(\tau,\cdot)-\phi_{2}(\tau,\cdot)\right\Vert _{2}^{2}+\frac{1}{2}\int_{0}^{t}\left\Vert \Psi_{1}(\tau,\cdot)-\Psi_{2}(\tau,\cdot)\right\Vert _{2}^{2}\,\dd \tau\\
&\leq C_{r,d}T\underset{\tau\in[0,T]}{\sup}\left\Vert \phi_{1}(\tau,\cdot)-\phi_{2}(\tau,\cdot)\right\Vert _{2}^{2}+\frac{1}{2}\int_{0}^{t}\left\Vert \Psi_{1}(\tau,\cdot)-\Psi_{2}(\tau,\cdot)\right\Vert _{2}^{2}\,\dd \tau.
\end{align*}
As a result,
\[
\underset{t\in[0,T]}{\sup}\left\Vert \Psi_{1}(t,\cdot)-\Psi_{2}(t,\cdot)\right\Vert _{2}\leq C_{r,d}\sqrt{T}e^{\frac{T}{2}}\underset{t\in[0,T]}{\sup}\left\Vert \phi_{1}(t,\cdot)-\phi_{2}(t,\cdot)\right\Vert _{2},
\]
which shows that, for $T=T_{r,d}$ sufficiently small, 
$\phi \mapsto \Psi_{r,\delta}[\phi]$ is a contraction, thereby ensuring the existence and uniqueness of a fixed point $\psi_{r,\delta}$. 
Note that, by construction of $\Psi$, 
we see that this solution belongs to $C^{1}([0,T];C^{\infty}(\mathbb{T}^{d}))$.
The global existence follows by a standard iteration argument together
with the conservation of the $L^{2}$--norm.

\medskip
\textbf{2}. \textit{Compactness in $\delta$}. In this step, we aim to remove the regularization in time, which will enable us to prove that the system:
\begin{equation}\label{r equation}
\begin{cases}
i\partial_{t}\psi_{r}(t,x)=-\frac{1}{2}\Delta\psi_{r}(t,x)+(\chi_{r}\star V_{r})(t,x)\psi_{r}(t,x),\quad& x\in \mathbb{T}^d,\,t>0,\\[0.5mm]
-\Delta V_{r}(t,x)=(\chi_{r}\star|\psi_{r}|^{2})(t,x)-e^{V_{r}(t,x)},& x\in \mathbb{T}^d,\,t>0,\\[0.5mm]
\psi_{r}(0,x)=\psi^{\mathrm{in}}, & x\in \mathbb{T}^d
\end{cases}
\end{equation}
is well-posed.
Consider the solution $\psi_{r,\delta}$ of \eqref{time reg eq} constructed in Step 1. We obtain that, 
for any $t,s\in[0,T]$,
\begin{align}
 &\| \psi_{r,\delta}(t,\cdot)-\psi_{r,\delta}(s,\cdot)\|_{H^{2}} \notag\\
 &\leq | t-s|\big(\|\Delta \psi_{r,\delta}\|_{L^{\infty}_{t}L^{2}_{x}} 
 +\|\Delta^{2} \psi_{r,\delta}\|_{L^{\infty}_{t}L^{2}_{x}}\notag\\&+ \|(\chi_{r}\star V_{r,\delta})\psi_{r,\delta} \|_{L^{\infty}_{t}L^{2}_{x}}+\|
  \Delta ((\chi_{r}\star V_{r,\delta})\psi_{r,\delta}))\|_{L^{\infty}_{t}L^{2}_{x}}\big).
 \label{deltaH2 compactness}
\end{align}
We start by propagating the Sobolev norms of $\psi_{r,\delta}$ uniformly in $\delta$.
\begin{align*}
&\frac{\dd}{\dd t
}\left\Vert \Delta^{2} \psi_{r,\delta}(t,\cdot)\right\Vert_{2}^{2}\\
&=2\Re\Big(\int_{\mathbb{T}^{d}}\Delta^{2} \partial_{t}\psi_{r,\delta}(t,x)\Delta^{2} \overline{\psi_{r,\delta}}(t,x) \ \dd x\Big)\\
&=
-2\Re\Big(i\int_{\mathbb{T}^{d}} \Delta^{2}\big(-\frac{1}{2}\Delta \psi_{r,\delta}(t,x)
+ (\chi_{r}\star V_{r,\delta})(t,x)\psi_{r,\delta}(t,x)\big)\Delta^{2}\overline{\psi_{r,\delta}}(t,x) \ \dd x\Big).
\end{align*}
Integration by parts reveals
\begin{align*}
\Re\Big(i\int_{\mathbb{T}^{d}}\Delta^{2}\Delta \psi_{r,\delta}(t,x)\Delta^{2}\overline{\psi_{r,\delta}}(t,x)\ \dd x\Big)=\Re\Big(i\int_{\mathbb{T}^{d}}\left\vert\nabla\Delta^{2}\psi_{r,\delta}\right\vert^{2}(t,x) \ \dd x\Big)=0,
\end{align*}
hence
\begin{align*}
\frac{\dd}{\dd t
}\left\Vert \Delta^{2} \psi_{r,\delta}(t,\cdot)\right\Vert_{2}^{2}=2\Re \Big(-i\int_{\mathbb{T}^{d}}\Delta^{2}\big((\chi_{r}\star
V_{r,\delta})\psi_{r,\delta}\big)(t,x)\Delta^{2}\overline{\psi_{r,\delta}}(t,x) \ \dd x  \Big).
\end{align*}
We can write
\begin{align*}
\Delta^{2}\big((\chi_{r}\star V_{r,\delta})\psi_{r,\delta}\big)=\sum_{k=0}^{4}c_{k}\nabla^{k}(\chi_{r}\star V_{r,\delta})\nabla^{4-k}\psi_{r,\delta} 
\end{align*}
for some constants $c_{k}\in \mathbb{N}$, so that
\begin{align}
\frac{\dd}{\dd t
}\left\Vert \Delta^{2} \psi_{r,\delta}(t,\cdot)\right\Vert_{2}^{2}\lesssim \sum_{k=0}^{4} \int_{\mathbb{T}^{d}} \left\vert \nabla^{k}(\chi_{r}\star V_{r,\delta})\nabla^{4-k}\psi_{r,\delta}\Delta^{2}\psi_{r,\delta}\right\vert(t,x) \ \dd x \coloneqq \sum_{k=0}^{4}I_{k}. \label{sum of Ik}\end{align}
We proceed by estimating each one of $I_{k}$:
\begin{align*}
I_{0}&=\int_{\mathbb{T}^{d}} 
\big\vert (\chi_{r}\star V_{r,\delta})(t,x)\big\vert
\,\big\vert \Delta^{2} \psi_{r,\delta} (t,x)\big\vert^{2} \ \dd x
\leq \big\Vert V_{r,\delta}\big\Vert_{L^\infty_{t,x}}\| \Delta^{2}\psi_{r,\delta}(t,\cdot)\|_{2}^{2},\\
I_{1}&=\int_{\mathbb{T}^{d}} \big\vert \nabla (\chi_{r}\star V_{r,\delta})(t,x)\big\vert\,
\big\vert \nabla \Delta \psi_{r,\delta}(t,x)\Delta^{2} \psi_{r,\delta} (t,x)\big\vert \ \dd x\\
&\leq
\big\Vert \nabla V_{r,\delta}\big\Vert_{L^\infty_{t,x}}\big(\big\Vert \nabla \Delta \psi_{r,\delta}(t,\cdot)\big\Vert_{2}^{2}
+\big\Vert \Delta^{2}\psi_{r,\delta}(t,\cdot)\big\Vert^{2}_{2}\big),\\
I_{2}&=\int_{\mathbb{T}^{d}} \big| \Delta( \chi_{r}\star V_{r,\delta})(t,x)\big|\,
\big| (\Delta \psi_{r,\delta}\Delta^{2} \psi_{r,\delta})(t,x)\big| \, \dd x \\
&\leq
\left\Vert \Delta V_{r,\delta}\right\Vert_{L^\infty_{t,x}}\big(\left\Vert \Delta \psi_{r,\delta}(t,\cdot)\right\Vert_{2}^{2}+\| \Delta^{2}\psi_{r,\delta}(t,\cdot)\|^{2}_{2}\big).
\end{align*}
Using Lemma \ref{existence uniqueness for hat{U}}, we see that
\begin{align*}
\left\Vert V_{r,\delta}\right\Vert_{L^{\infty}_{t}C^{2,\alpha}_{x}}\leq C_{r,d}.
\end{align*}
Thus, summarizing the above estimate, we have
\begin{align*}
I_{0}+I_{1}+I_{2}\leq C_{r,d}\big(\| \Delta \psi_{r,\delta}(t,\cdot)\|_{2}^{2}+\| \Delta^{2}\psi_{r,\delta}(t,\cdot)\|_{2}^{2}\big).
\end{align*}
In order to bound $I_{3}$ and $I_{4}$, note that
\begin{align*}
-\Delta^{2}V_{r,\delta}=\zeta_{\delta}\star \Delta \chi_{r}\star\left\vert \psi_{r,\delta}\right\vert^{2}-\Delta(e^{V_{r,\delta}})=\zeta_{\delta}\star \Delta \chi_{r}\star\left\vert \psi_{r,\delta}\right\vert^{2}-e^{V_{r,\delta}}\left\vert \nabla V_{r,\delta}\right\vert^{2}-e^{V_{r,\delta}}\Delta V_{r,\delta} ,
\end{align*}
and therefore Lemma \ref{existence uniqueness for hat{U}} entails that
\begin{align*}
\left\Vert \Delta^{2}V_{r,\delta} \right\Vert_{L^{\infty}_{t,x}}\leq C_{r,d}.
\end{align*}
Thus, we obtain the bounds
\begin{align*}
I_{3}&\leq C_{r,d}\big(\| \nabla \psi_{r,\delta}\|_{2}^{2}+\| \Delta^{2}\psi_{r,\delta}\|_{2}^{2}\big),\\[1mm]
I_{4}&\leq C_{r,d}\big(1+\| \Delta^{2}\psi_{r,\delta}\|_{2}^{2}\big).
\end{align*}
To conclude, we have proved the estimate
\begin{align*}
\sum_{k=0}^{4}I_{k}\leq C_{r,d}\big(1+\| \Delta^{2}\psi_{r,\delta}(t,\cdot)\|_{2}^{2}\big),
\end{align*}
so that, in view of \eqref{sum of Ik}, we obtain
\begin{align*}
\frac{\dd}{\dd t}\|\Delta^{2} \psi_{r,\delta}(t,\cdot)\|_{2}^{2}
\leq C_{r,d}\big(1+\| \Delta^{2}\psi_{r,\delta}(t,\cdot)\|_{2}^{2}\big),
\end{align*}
which yields the estimate
\begin{align}\label{PROP OF H4}
\| \psi_{r,\delta}(t,\cdot)\|_{H^{4}}^{2}\leq C
\end{align}
for some $C=C(r,d,\| \psi^{\mathrm{in}}\|_{H^4})$.
Substituting \eqref{PROP OF H4} into \eqref{deltaH2 compactness} produces the inequality
\begin{align}
 &\left\Vert \psi_{r,\delta}(t,\cdot)-\psi_{r,\delta}(s,\cdot)\right\Vert_{H^{2}}\leq C| t-s|, \label{step2 final ine}
\end{align}
where $C=C(r,d,\| \psi^{\mathrm{in}}\|_{H^4})$.\\

\textbf{3}. \textit{Extraction of a solution}. By the Arzela-Ascoli theorem and \eqref{step2 final ine}, there exist a function $\psi_{r}\in C([0,T];H^{2}(\mathbb{T}^{d})) $ and a subsequence $\delta_{k}$ such that
\begin{align*}
\left\Vert \psi_{r,\delta_{k}}-\psi_{r}\right\Vert_{C_{t}H^{2}_{x}}\underset{k \rightarrow \infty}{\longrightarrow} 0.
\end{align*}
Moreover, inequality \eqref{step2 final ine} ensures that $\psi_{r}\in \mathrm{Lip}([0,T];H^{2}(\mathbb{T}^{d}))$.
We are left to verify that $\psi_{r}$ is the asserted solution. Denote by $V_{r}$ the solution of
\begin{align*}
-\Delta V_{r}=\chi_{r}\star\left\vert \psi_{r}\right\vert^{2}-e^{V_{r}}.
\end{align*}
Lemma \ref{=00007Bhat=00007DV vs =00007Bbar=00007DV} implies that
\begin{align*}
\| V_{r,\delta_{k}}(t,\cdot)-V_{r}(t,\cdot)\|_{2}
&\leq \|(\zeta_{\delta_{k}}\star\chi_{r}\star|\psi_{r,\delta_{k}}|^{2})(t,\cdot)
 -(\chi_{r}\star|\psi_{r}|^{2})(t,\cdot)\|_{2}\\
&\leq
\| (\zeta_{\delta_{k}}\star\chi_{r}\star| \psi_{r,\delta_{k}} |^{2})(t,\cdot)
  -(\zeta_{\delta_{k}}\star\chi_{r}\star| \psi_{r}|^{2})(t,\cdot) \|_{2}\\
&\quad\,\,+\| (\zeta_{\delta_{k}}\star\chi_{r}\star| \psi_{r}|^{2})(t,\cdot)
  -(\chi_{r}\star| \psi_{r}|^{2})(t,\cdot) \|_{2}.
\end{align*}
The first term is bounded by
\begin{align*}
\underset{t \in [0,T]}{\sup}\| \chi_{r}\star| \psi_{r,\delta_{k}}|^{2}-\chi_{r}\star| \psi_{r}|^{2}\|_{2}&\lesssim_{r}  \underset{t \in [0,T]}{\sup} \| | \psi_{r,\delta_{k}}|^{2}(t,\cdot)-| \psi_{r}|^{2}(t,\cdot)\|_{1}\\
&\lesssim_{r} \underset{t \in [0,T]}{\sup}\| \psi_{r,\delta_{k}}(t,\cdot)-\psi_{r}(t,\cdot)\|_{2}\underset{k \rightarrow \infty}{\longrightarrow}0.
\end{align*}
In addition, note that
\begin{align*}
(\zeta_{\delta_{k}}\star\chi_{r}\star\left\vert \psi_{r}\right\vert^{2})(t,x)\underset{k\rightarrow \infty}{\longrightarrow} (\chi_{r}\star\left\vert \psi_{r}\right\vert^{2})(t,x) 
\qquad \mbox{pointwise a.e. in}   \ (t,x),
\end{align*}
and, by Lebesgue's dominated convergence theorem, we have
\begin{align*}
\| (\zeta_{\delta_{k}}\star\chi_{r}\star| \psi_{r}|^{2})(t,\cdot)-(\chi_{r}\star| \psi_{r}|^{2})(t,\cdot) \|_{2}\underset{k \rightarrow \infty}{\longrightarrow} 0.
\end{align*}
Consequently, we obtain
\begin{align*}
\left\Vert V_{r,\delta_{k}}(t,\cdot)-V_{r}(t,\cdot)\right\Vert_{2} \underset{k\rightarrow \infty}{\longrightarrow} 0.
\end{align*}
As a result, we can pass to the limit as $k\rightarrow \infty$ in order to conclude that, for any
$\varphi \in C^{\infty}_{0}((0,T)\times \mathbb{T}^{d})$, 
\begin{align*}
&\int_{[0,T]\times \mathbb{T}^{d}}i\psi_{r}(t,x)\partial_{t}\varphi(t,x)\ \dd x \dd t\\
&=-\frac{1}{2}\int_{[0,T]\times \mathbb{T}^{d}} \Delta \psi_{r}(t,x)\varphi(t,x)\ \dd x \dd t
+\int_{[0,T]\times \mathbb{T}^{d}} (\chi_{r}\star V_{r})(t,x)\psi_{r}(t,x)\varphi(t,x)\ \dd x \dd t.
\end{align*}
Since $\psi_{r}\in \mathrm{Lip}([0,T];H^{2}(\mathbb{T}^{d}))$, 
we can integrate the term on the left-hand side by parts
in order to conclude that $\psi_{r}\in \mathrm{Lip}([0,T];H^{2}(\mathbb{T}^{d}))$
verifies the Cauchy problem \eqref{r equation}. Finally, by Lemma \ref{well posed linear eq} 
and uniqueness, we can prove that $\psi_{r}\in C^{1}([0,T];C^{\infty}(\mathbb{T}^{d})).$
\end{proof}

We continue by showing the compactness in $r$ of the family of solutions $\left\{\psi_{r} \right\} _{r>0}$ 
constructed above, which will enable us to extract a converging subsequence, 
thereby proving the existence of a solution to the original equation. 
As remarked in \cite{griffin2020singular}, the advantage of using a double regularization for $V_{r}$ 
is because this regularization procedures ensures the conservation of the following time-dependent quantity $\mathcal{F}_{r}$ defined in the next lemma. 
The proof is omitted, since it is similar to the proof of the conservation of total energy that 
has been given in \S\ref{The 1-Body Semi-Classical Limit}.
\begin{lem}\label{lem:5.8}
Let $\psi_{r}(t,x)\in\mathrm{Lip}([0,T];H^{2}(\mathbb{T}^{d}))$ be the solution to  \eqref{r mollified eq}. Let
\begin{align*}
\mathcal{F}_{r}(t)\coloneqq
\frac{1}{2}\int_{\mathbb{T}^{d}}\left\vert \nabla \psi_{r}(t,x)\right\vert^{2}\,\dd x+\frac{1}{2}\int_{\mathbb{T}^{d}}\left|\nabla V_{r}(t,x)\right|^{2}\,\dd x+\int_{\mathbb{T}^{d}}V_{r}(t,x)e^{V_{r}(t,x)}\,\dd x.
\end{align*}
Then
\begin{align}\label{conservation of energy sec 2}
 \frac{\dd}{\dd t}\mathcal{F}_{r}(t)=0.
\end{align}
\end{lem}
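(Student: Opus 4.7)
The plan is to follow the pattern of Lemma \ref{Conservation of energy}, differentiating each of the three summands in $\mathcal{F}_r(t)$ and extracting the algebraic cancellation that arises from the coupling between the Schr\"odinger equation and the regularized Poisson-Boltzmann equation. The smoothness $\psi_r\in C^1([0,T];C^\infty(\mathbb{T}^d))$ supplied by Lemma \ref{mollified eq well posed}, together with the smooth dependence of $V_r$ on the source $\chi_r\star|\psi_r|^2$ coming from Lemma \ref{existence uniqueness for hat{U}}, legitimizes all integrations by parts and differentiations under the integral sign.

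First I would differentiate the kinetic piece. Pairing the Schr\"odinger equation with $-\Delta\overline{\psi_r}$, taking real parts, and integrating by parts yields
\begin{equation*}
\frac{d}{dt}\frac{1}{2}\int_{\mathbb{T}^d}|\nabla\psi_r|^2\,dx = -\int_{\mathbb{T}^d}\nabla(\chi_r\star V_r)\cdot J_r\,dx,
\end{equation*}
where $J_r:=\mathrm{Im}(\overline{\psi_r}\nabla\psi_r)$. Using the evenness of $\chi_r$ to move the mollifier onto $J_r$ and invoking the continuity equation $\partial_t|\psi_r|^2+\nabla\cdot J_r=0$ (which follows exactly as in Lemma \ref{density evolution}, since $\chi_r\star V_r$ is real), this rewrites as
\begin{equation*}
\frac{d}{dt}\frac{1}{2}\int_{\mathbb{T}^d}|\nabla\psi_r|^2\,dx = -\int_{\mathbb{T}^d} V_r\,\partial_t(\chi_r\star|\psi_r|^2)\,dx.
\end{equation*}

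Next I would substitute $\chi_r\star|\psi_r|^2 = -\Delta V_r + e^{V_r}$ from the elliptic equation and integrate by parts in space to obtain
\begin{equation*}
-\int_{\mathbb{T}^d} V_r\,\partial_t(\chi_r\star|\psi_r|^2)\,dx = -\frac{d}{dt}\frac{1}{2}\int_{\mathbb{T}^d}|\nabla V_r|^2\,dx - \int_{\mathbb{T}^d} V_r e^{V_r}\,\partial_t V_r\,dx.
\end{equation*}
Combined with the direct identity
\begin{equation*}
\frac{d}{dt}\int_{\mathbb{T}^d} V_r e^{V_r}\,dx = \int_{\mathbb{T}^d} V_r e^{V_r}\,\partial_t V_r\,dx + \int_{\mathbb{T}^d} e^{V_r}\,\partial_t V_r\,dx,
\end{equation*}
the three contributions to $\frac{d\mathcal{F}_r}{dt}$ collapse to the single term $\int_{\mathbb{T}^d} e^{V_r}\,\partial_t V_r\,dx = \frac{d}{dt}\int_{\mathbb{T}^d} e^{V_r}\,dx$.

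Finally, this last quantity vanishes: integrating the elliptic equation over $\mathbb{T}^d$ and using that $\|\psi_r(t,\cdot)\|_2^2\equiv 1$ (an immediate consequence of the Schr\"odinger equation with a real mollified potential) gives $\int_{\mathbb{T}^d} e^{V_r}\,dx = \int_{\mathbb{T}^d}\chi_r\star|\psi_r|^2\,dx = 1$ for all $t$, so its time derivative is zero. Hence $\frac{d\mathcal{F}_r}{dt}=0$. The argument is purely structural and presents no serious analytic obstacle; the only point worth verifying is the regularity of $\partial_t V_r$, which is obtained by differentiating $-\Delta V_r+e^{V_r}=\chi_r\star|\psi_r|^2$ in time and invoking standard Schauder theory for the resulting linear equation, together with the $C^1$-in-time regularity of $\psi_r$.
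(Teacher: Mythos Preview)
Your proposal is correct and follows essentially the same route as the paper, which omits the proof and refers back to Lemma \ref{Conservation of energy}. The one ingredient specific to the regularized setting---transferring the mollifier from $\chi_r\star V_r$ onto $|\psi_r|^2$ via the evenness of $\chi_r$, so that the Poisson-Boltzmann equation can be invoked---is exactly the reason the paper highlights the ``double regularization'' just before stating the lemma, and you have handled it correctly.
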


\medskip
We will also need the following lemma, stated with a scaling parameter $\varepsilon$, 
since it is also
useful in the asymptotic analysis later.
\begin{lem}[\cite{griffin2020singular}, Proposition 4.4]\label{stability with respect to W_2}
Let $d\in\{2,3\}$. For each $i=1,2$, let $h_{i}\in L^{\infty}\cap L^{\frac{d+2}{d}}(\mathbb{T}^{d})$, and 
let $(\tilde{V}_{i},\hat{V}_{i})$ be the solution of the system{\rm :}
\begin{align*}
\begin{cases}
- \Delta \tilde{V}_{i}=h_{i}-1,\\[0.5mm]
 -\Delta \hat{V}_{i}=1-e^{\tilde{V}_{i}+\hat{V}_{i}}.
\end{cases}
\end{align*}
Then
\begin{align*}
&\| \nabla \tilde{V}_{1}-\nabla \tilde{V}_{2}\|_{2}^2\leq \underset{i}{\max} \| h_{i} \|_{\infty} W_{2}^{2}(h_{1},h_{2}),
\\[0.5mm]
&\|\nabla \hat{V}_{1}-\nabla \hat{V}_{2}\|_{2}^2\leq C\,\underset{i}{\max} \| h_{i} \|_{\infty}W_{2}^{2}(h_{1},h_{2}),
\end{align*}
where $C=C_{d}(1+\underset{i}{\max} \| h_{i} \|_{\frac{d+2}{d}})$, and $W_{2}$ designates the $2$-Wasserstein distance.
\end{lem}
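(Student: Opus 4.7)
The proof splits naturally into the two asserted bounds. For the \emph{first inequality} (the linear, Loeper-type estimate) I would proceed as follows. Subtracting the two equations gives $-\Delta(\tilde V_1 - \tilde V_2) = h_1 - h_2$, and testing against $\tilde V_1 - \tilde V_2$ yields
\begin{align*}
\|\nabla(\tilde V_1 - \tilde V_2)\|_2^2 = \int_{\mathbb T^d} (\tilde V_1 - \tilde V_2)(h_1-h_2)\, \dd x.
\end{align*}
Fix an optimal transport plan $\pi$ for $W_2(h_1,h_2)$ and rewrite the right-hand side as $\int [(\tilde V_1 - \tilde V_2)(x) - (\tilde V_1 - \tilde V_2)(y)]\,\dd\pi(x,y)$. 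Expressing the increment via the fundamental theorem of calculus along the segment joining $y$ to $x$, then applying Cauchy--Schwarz in $(t,x,y)$, one arrives at
\begin{align*}
\int(\tilde V_1 - \tilde V_2)(h_1 - h_2)\,\dd x
\leq W_2(h_1,h_2) \int_0^1 \Big(\int |\nabla(\tilde V_1 - \tilde V_2)|^2\, \dd\mu_t\Big)^{\!1/2}\dd t,
\end{align*}
where $\mu_t$ is the McCann displacement interpolant between $h_1$ and $h_2$. The key ingredient is the $L^\infty$-convexity of the interpolant: $\|\mu_t\|_\infty \leq \max_i \|h_i\|_\infty$ for every $t \in [0,1]$. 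Inserting this bound gives $\|\nabla(\tilde V_1 - \tilde V_2)\|_2^2 \leq (\max_i\|h_i\|_\infty)^{1/2} W_2(h_1,h_2)\,\|\nabla(\tilde V_1 - \tilde V_2)\|_2$, which yields the first inequality after cancellation.

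For the \emph{second inequality}, I would exploit the monotonicity of the exponential. Subtracting the equations for $\hat V_i$ and setting $V_i = \tilde V_i + \hat V_i$, we get $-\Delta(\hat V_1 - \hat V_2) = e^{V_2} - e^{V_1}$. Writing $e^{V_2} - e^{V_1} = -\theta(V_1 - V_2)$ with $\theta := \int_0^1 e^{(1-s)V_2 + sV_1}\dd s \geq 0$, testing against $\hat V_1 - \hat V_2$, and decomposing $V_1 - V_2 = (\tilde V_1 - \tilde V_2) + (\hat V_1 - \hat V_2)$ gives
\begin{align*}
\|\nabla(\hat V_1 - \hat V_2)\|_2^2 = -\int_{\mathbb T^d}\theta(\hat V_1 - \hat V_2)^2\,\dd x - \int_{\mathbb T^d}\theta (\hat V_1 - \hat V_2)(\tilde V_1 - \tilde V_2)\,\dd x.
\end{align*}
The first term is nonpositive and can be dropped. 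Since $e^{V_1}$ and $e^{V_2}$ are both probability densities, $e^{V_2}-e^{V_1}$ has zero mean, so we may shift $\hat V_1 - \hat V_2$ by a constant to have zero mean without altering either side; the Poincar\'e inequality then applies to both differences. Combined with Cauchy--Schwarz, this yields $\|\nabla(\hat V_1 - \hat V_2)\|_2 \leq C_P^2 \|\theta\|_\infty \|\nabla(\tilde V_1 - \tilde V_2)\|_2$. The $L^\infty$ bound on $\theta$ is controlled by $\|e^{V_i}\|_\infty$, which by the Schauder-type estimates of Lemma \ref{existence uniqueness for hat{U}} depends on $d$ and $\max_i\|h_i\|_{(d+2)/d}$. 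Squaring and inserting the first inequality delivers the second bound with the stated constant.

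The \textbf{main obstacle} is the $L^\infty$ control on the displacement interpolant $\mu_t$, which underlies Loeper's estimate: the transport map provided by Brenier need not be invertible, so one must work either via the Benamou--Brenier Eulerian formulation (where the continuity equation $\partial_t \mu_t + \mathrm{div}(\mu_t v_t) = 0$ and a maximum principle give the $L^\infty$ bound directly) or by approximating $h_1,h_2$ by smooth positive densities, using the Monge--Amp\`ere/Jacobian convexity along geodesics, and passing to the limit. The nonlinear piece is then a routine convexity--monotonicity argument, provided the a priori $C^{1,\alpha}$ estimates of Lemma \ref{existence uniqueness for hat{U}} are available to absorb $\|\theta\|_\infty$ into the constant.
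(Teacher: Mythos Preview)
The paper does not prove this lemma; it is quoted verbatim from \cite{griffin2020singular}. Your sketch reconstructs precisely the argument one expects from that reference: Loeper's optimal-transport estimate for the linear piece $\tilde V$, and then the convexity/monotonicity trick for $\hat V$ (which, incidentally, is nothing other than Lemma~\ref{=00007Bhat=00007DV vs =00007Bbar=00007DV} of the present paper combined with Poincar\'e and the first inequality).

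One small repair is needed in your treatment of $\hat V$. After arriving at
\[
\|\nabla(\hat V_1-\hat V_2)\|_2^2 \;+\; \int_{\mathbb T^d}\theta\,(\hat V_1-\hat V_2)^2\,\dd x
\;=\; -\int_{\mathbb T^d}\theta\,(\hat V_1-\hat V_2)(\tilde V_1-\tilde V_2)\,\dd x,
\]
your claim that one may shift $\hat V_1-\hat V_2$ by a constant ``without altering either side'' fails once the right-hand side has been decomposed via $\theta$: the two pieces $\int\theta(\hat V_1-\hat V_2)^2$ and $\int\theta(\hat V_1-\hat V_2)(\tilde V_1-\tilde V_2)$ are \emph{not} individually invariant under that shift, only their sum is. The clean fix is to avoid Poincar\'e on $\hat V_1-\hat V_2$ altogether: apply Cauchy--Schwarz with weight $\theta$ to the right-hand side, so that with $A=\|\nabla(\hat V_1-\hat V_2)\|_2^2$, $B=\int\theta(\hat V_1-\hat V_2)^2$, $D=\int\theta(\tilde V_1-\tilde V_2)^2$ one has $A+B\le (BD)^{1/2}\le B+\tfrac14 D$, hence $A\le\tfrac14 D\le\tfrac14\|\theta\|_\infty\|\tilde V_1-\tilde V_2\|_2^2$. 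Now $\tilde V_1-\tilde V_2$ \emph{does} have zero mean by construction, so Poincar\'e applies to it, and the bound on $\|\theta\|_\infty$ via Lemma~\ref{existence uniqueness for hat{U}} closes the estimate with the stated dependence on $\max_i\|h_i\|_{(d+2)/d}$.
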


\medskip
\textit{Proof of Theorem \ref{wellposed intro}}.
We divide the proof into five steps.\\

\textbf{1}. \textit{Uniform bound in $r$ on $\underset{t\in[0,T]}{\sup}\left\Vert \psi_{r}(t,\cdot)\right\Vert_{H^{1}(\mathbb{T}^{d})}$}. Consider the mollified system:
\begin{align}\label{mollifiedequation}
\begin{cases}
i\partial_{t}\psi_{r}(t,x)=-\frac{1}{2}\Delta\psi_{r}(t,x)+(\chi_{r}\star V_{r})(t,x)\psi_{r}(t,x),\quad& x\in \mathbb{T}^d,\,t>0,\\[0.5mm]
-\Delta V_{r}(t,x)=(\chi_{r}\star\left|\psi_{r}\right|^{2})(t,x)-e^{V_{r}(t,x)},& x\in \mathbb{T}^d,\,t>0,\\[0.5mm]
 \psi_{r}|_{t=0}=\psi^{\mathrm{in}},& x\in \mathbb{T}^d.
\end{cases}
\end{align}
and denote by $\psi_{r}\in C^{1}([0,T];C^{\infty}(\mathbb{T}^{d}))$ the solution to this system ensured 
thanks to Lemma \ref{mollified eq well posed}.
Using Lemma \ref{lem:5.8},
we have
\begin{align}
&\frac{1}{2}\int_{\mathbb{T}^{d}}\left|\nabla\psi_{r}(t,x)\right|^{2}\,\dd x+\frac{1}{2}\int_{\mathbb{T}^{d}}\left|\nabla V_{r}(t,x)\right|^{2}\,\dd x+\int_{\mathbb{T}^{d}}V_{r}(t,x)e^{V_{r}(t,x)}\,\dd x \notag\\
&=\frac{1}{2}\int_{\mathbb{T}^{d}}\left|\nabla\psi^{\mathrm{in}}(x)\right|^{2}\,\dd x+\frac{1}{2}\int_{\mathbb{T}^{d}}\left|\nabla V_{r}(0,x)\right|^{2}\,\dd x+\int_{\mathbb{T}^{d}}V_{r}(0,x)e^{V_{r}(0,x)}\,\dd x \notag\\
&\leq\frac{1}{2}\left\Vert \nabla\psi^{\mathrm{in}}\right\Vert _{2}^{2}+\frac{1}{2}\left\Vert \nabla V_{r}(0,\cdot)\right\Vert _{2}^{2}+\left\Vert V_{r}(0,\cdot)\right\Vert _{\infty},\label{eq:-18}
\end{align}
where we have used that $\int_{\mathbb{T}^{d}}e^{V_{r}(0,x)}\,\dd x=1$ in the last inequality.
By the Sobolev embedding, we have
\begin{align*}
\| \chi_{r}\star | \psi^{\mathrm{in}}|^{2}\|_{\frac{d+2}{d}}\leq \| | \psi^{\mathrm{in}}|^{2} \|_{\frac{d+2}{d}}\leq C_{S}\| \psi^{\mathrm{in}}\|_{H^{1}(\mathbb{T}^{d})}^{2},   \end{align*}
where $C_{S}$ stands for the Sobolev constant.
Therefore, by Lemma \ref{existence uniqueness for hat{U}}, we see that
\begin{align*}
\| \nabla\hat{V}_{r}(0,\cdot)\| _{2}^{2}\leq C_{d}\Big(1+e^{4(1+C_{S}\| \psi^{\mathrm{in}}\|_{H^{1}(\mathbb{T}^{d})}^{2})}\Big).
\end{align*}
In addition, using the H\"older inequality, we obtain
\begin{align*}
\| \nabla\tilde{V}_{r}(0,\cdot)\|_{2}^{2}\leq \Big(1+C_{S}\| \psi^{\mathrm{in}}\|_{H^{1}(\mathbb{T}^{d})}^{2}\Big).
\end{align*}
Therefore, we have
\begin{align*}
\|\nabla V_{r}(0,\cdot)\|_{2}^{2}&\leq2\| \nabla\tilde{V}_{r}(0,\cdot)\|_{2}^{2}+2\| \nabla\hat{V}_{r}(0,\cdot)\|_{2}^{2}\\
&\leq2\big(1+C_{S}\|\psi^{\mathrm{in}}\|_{H^{1}(\mathbb{T}^{d})}^{2}\big)+2C_{d}\big(1+e^{4(1+C_{S}\| \psi^{\mathrm{in}}\|_{H^{1}(\mathbb{T}^{d})}^{2})}\big),
\end{align*}
and similarly
\[
\|V_{r}(0,\cdot)\|_{\infty}\leq2\big(1+C_{S}\|\psi^{\mathrm{in}}\|_{H^{1}(\mathbb{T}^{d})}^{2}\big)+2C_{d}\big(1+e^{4(1+C_{S}\| \psi^{\mathrm{in}}\|_{H^{1}(\mathbb{T}^{d})}^{2})}\big),
\]
which, in view of \eqref{eq:-18}, shows that
\begin{align}
\underset{t\in[0,T]}{\sup}\| \psi_{r}(t,\cdot)\|_{H^{1}(\mathbb{T}^{d})}\leq\mathcal{M}_{\mathrm{in}}, \label{step1 nabla Psi}
\end{align}
for some constant $\mathcal{M}_{\mathrm{in}}=\mathcal{M}_{\mathrm{in}}(\| \psi^{\mathrm{in}}\|_{H^{1}(\mathbb{T}^{d})},d)$. \\

\medskip
\textbf{2}. \textit{Uniform bound in $r$ on $\underset{t\in[0,T]}{\sup}\| \psi_{r}(t,\cdot)\|_{H^{2}(\mathbb{T}^{d})}$}. 
We compute
\begin{align*}
&\frac{\dd}{\dd t}\int_{\mathbb{T}^{d}}\frac{1}{2}\left|\Delta\psi_{r}(t,x)\right|^{2}\,\dd x\\
&= \Re\Big(\int_{\mathbb{T}^{d}}\Delta\partial_{t}\psi_{r}(t,x)\Delta\overline{\psi_{r}}(t,x)\,\dd x\Big)\\ 
&=\Re\Big(-i\int_{\mathbb{T}}\Delta\big(-\frac{1}{2}\Delta\psi_{r}(t,x)
+(\chi_{r}\star V_{r})(t,x)\psi_{r}(t,x)\big)\Delta\overline{\psi_{r}}(t,x)\,\dd x\Big)\\
&= \Re\Big(-i\int_{\mathbb{T}^{d}}\Delta\big((\chi_{r}\star V_{r})(t,x)\psi_{r}(t,x)\big)\Delta\overline{\psi_{r}}(t,x)\,\dd x\Big)\\ &= \Re\Big(-i\int_{\mathbb{T}^{d}}(\chi_{r}\star V_{r})(t,x)\left|\Delta\psi_{r}(t,x)\right|^{2}\,\dd x\Big)\\
&\quad+\Re\Big(-i\int_{\mathbb{T}^{d}}\Delta (\chi_{r}\star V_{r})(t,x)\psi_{r}(t,x)\Delta\overline{\psi_{r}}(t,x)\,\dd x\Big)\\
&\quad+2\Re\Big(-i\int_{\mathbb{T}^{d}}\nabla (\chi_{r}\star V_{r})(t,x)\nabla\psi_{r}(t,x)\Delta\overline{\psi_{r}}(t,x)\,\dd x\Big)\\
&= \Re\Big(-i\int_{\mathbb{T}^{d}}\Delta (\chi_{r}\star V_{r})(t,x)\psi_{r}(t,x)\Delta\overline{\psi_{r}}(t,x)\,\dd x\Big)\\
&\quad+2\Re\Big(-i\int_{\mathbb{T}^{d}}\nabla (\chi_{r}\star V_{r})(t,x)\nabla\psi_{r}(t,x)\Delta\overline{\psi_{r}}(t,x)\,\dd x\Big)\nonumber\\
&=: I+J.
\end{align*}
First, note that
\begin{align}
\left\vert J \right\vert &\leq 2\int_{\mathbb{T}^{d}}\left|\nabla (\chi_{r}\star V_{r})(t,x)\nabla\psi_{r}(t,x)\Delta\overline{\psi_{r}}(t,x)\right|\,\dd x \notag\\&\leq\int_{\mathbb{T}^{d}}\left|\nabla (\chi_{r}\star V_{r})(t,x)\nabla\psi_{r}(t,x)\right|^{2}\,\dd x+\int_{\mathbb{T}^{d}}\left|\Delta\psi_{r}(t,x)\right|^{2}\,\dd x \notag\\
 &\leq\|(\chi_{r}\star \nabla V_{r})(t,\cdot)\|_{4}^{2}\Big(\int_{\mathbb{T}^{d}}\left|\nabla\psi_{r}(t,x)\right|^{4}\,\dd x\Big)^{\frac{1}{2}}+\int_{\mathbb{T}^{d}}\left|\Delta\psi_{r}(t,x)\right|^{2}\,\dd x \notag\\
&\leq \left\Vert \nabla V_{r}(t,\cdot)\right\Vert _{4}^{2}\Big(\int_{\mathbb{T}^{d}}\left|\nabla\psi_{r}(t,x)\right|^{4}\,\dd x\Big)^{\frac{1}{2}}+\int_{\mathbb{T}^{d}}\left|\Delta\psi_{r}(t,x)\right|^{2}\,\dd x \notag\\
&\lesssim \big(\| \nabla\tilde{V}_{r}(t,\cdot)\|_{4}^{2}+\| \nabla\hat{V}_{r}(t,\cdot)\|_{4}^{2}\big)\Big(1+\int_{\mathbb{T}^{d}}\left|\Delta\psi_{r}(t,x)\right|^{2}\,\dd x\Big)
+\int_{\mathbb{T}^{d}}\left|\Delta\psi_{r}(t,x)\right|^{2}\,\dd x,\,\,\,
\label{JINE}
\end{align}
where we have used the Sobolev inequality, 
according to which $\left\Vert \nabla\psi\right\Vert_{4}^{2}\lesssim 1+\left\Vert \Delta \psi\right\Vert_{2}^{2} $, in order to derive the last inequality. To bound the first term in the above inequality, observe that
\begin{align}
 \| \nabla\tilde{V_{r}}(t,\cdot)\|_{4}^{2}\leq \left\Vert \psi_{r}(t,\cdot)\right\Vert_{4}^{2}+1\leq C_{S}(1+\left\Vert \nabla\psi_{r}(t,\cdot)\right\Vert_{2}^{2})\lesssim_{\mathcal{M}_{\mathrm{in}},d} 1,
 \label{1st}
\end{align}
where the last inequality is due to the Sobolev inequality and \eqref{step1 nabla Psi} in Step 1. In addition, Lemma \ref{existence uniqueness for hat{U}} entails
\begin{align}
\| \nabla\hat{V}_{r}(t,\cdot)\|_{4}\lesssim_{\mathcal{M}_{\mathrm{in}},d} 1. \label{2nd}   \end{align}
Hence, inequalities \eqref{JINE}--\eqref{2nd} imply that
\begin{align}\label{est on J}
J\lesssim_{\mathcal{M}_{\mathrm{in}},d} 1+\left\Vert \Delta \psi_{r}(t,\cdot)\right\Vert_{2} ^{2}.
\end{align}
As for $I$, we first observe that
\begin{align*}
\| \Delta V_{r}(t,\cdot)\|_{2}\leq 2\|(\chi_{r}\star | \psi_{r}|^{2})(t,\cdot)  \|_{2}+2\| e^{V_{r}(t,\cdot)}\|_{2}\leq 2\| \psi_{r}(t,\cdot)\|_{4}^{2}+2\| \psi_{r}(t,\cdot)\|_{4}^{2}=4\| \psi_{r}(t,\cdot)\|_{4}^{2},
\end{align*}
where the second inequality is due to the estimate:
$\Vert e^{V_{r}}\Vert_{2}\leq \Vert (\chi_{r}\star\vert \psi\vert^{2})(t,\cdot) \Vert_{2}$ 
proved in Lemma \ref{L^2 bound o e^V}. Therefore, it follows from the Sobolev embedding and \eqref{step1 nabla Psi} that
\begin{align}
\left\Vert \Delta V_{r}(t,\cdot)\right\Vert_{2}\lesssim_{\mathcal{M}_{\mathrm{in}},d} 1. \label{L2 norm of DElta}
\end{align}
We proceed by observing that
\begin{align}\label{est I}
I &\leq\int_{\mathbb{T}^{d}}\left\vert (\chi_{r}\star\Delta V_{r})(t,x)\psi_{r}(t,x)\Delta\overline{\psi_{r}}(t,x) \right\vert \,\dd x\nonumber\\&\leq
\int_{\mathbb{T}^{d}}\left\vert (\chi_{r}\star\Delta V_{r})(t,x)\psi_{r}(t,x)\right\vert^{2} \,\dd x+\int_{\mathbb{T}^{d}}\left\vert\Delta\overline{\psi_{r}}(t,x)\right\vert^{2}\,\dd x \notag\\
& \leq \left\Vert \psi_{r}(t,\cdot)\right\Vert_{\infty}^{2}
\left\Vert (\chi_{r}\star\Delta V_{r})(t,\cdot)\right\Vert_{2}^{2}+\left\Vert \Delta \psi_{r}(t,\cdot)\right\Vert_{2}^{2} \notag\\
 &\leq \left\Vert \psi_{r}(t,\cdot)\right\Vert_{\infty}^{2}\left\Vert \Delta V_{r}(t,\cdot)\right\Vert_{2}^{2}+\left\Vert \Delta \psi_{r}(t,\cdot)\right\Vert_{2}^{2} \notag\\
&\lesssim_{\mathcal{M}_{\mathrm{in}},d} ( 1+\left\Vert \Delta \psi_{r}(t,\cdot)\right\Vert_{2}^{2}),
\end{align}
where we have used \eqref{L2 norm of DElta} and the interpolation inequality:
\begin{align}
\left\Vert \Psi \right\Vert_{\infty}\leq C_{d}\big(\Vert \Psi \Vert_{2}+\Vert \Delta \Psi \Vert_{2}\big) \label{interpo ine}
\end{align}
for the last inequality in \eqref{est I}.
Thus, gathering \eqref{est I} and \eqref{est on J}, we have proved that
\begin{align*}
\frac{\dd}{\dd t} \left\Vert \psi_{r}(t,\cdot)\right\Vert_{H^{2}(\mathbb{T}^{d})}^{2}\leq C\left\Vert \psi_{r}(t,\cdot)\right\Vert_{H^{2}(\mathbb{T}^{d})}^{2}\
\end{align*}
for some $C=C(\Vert \psi^{\mathrm{in}}\Vert_{H^{1}},d)$, which, owing to the Gr\"onwall inequality, 
implies that
\begin{align}
\| \psi_{r}(t,\cdot)\|_{H^{2}(\mathbb{T}^{d})}\leq e^{Ct}\| \psi^{\mathrm{in}}\|_{H^{2}(\mathbb{T}^{d})}\leq \mathcal{M}_{\mathrm{in}}', \label{propgation of H^2}
\end{align}
where $C=C(\mathcal{M}_{\mathrm{in}},d)$ and $\mathcal{M}_{\mathrm{in}}'=\mathcal{M}_{\mathrm{in}}'(T,d,\Vert \psi^{\mathrm{in}}\Vert_{H^{2}(\mathbb{T}^d)})$.

\medskip
\textbf{3.} \textit{Uniform bound in $r$ on $\underset{t\in[0,T]}{\sup}\| \psi_{r}(t,\cdot)\|_{H^{4}(\mathbb{T}^{d})}$}. We follow a similar approach to the one taken in Step 2 in the proof of Theorem \ref{mollified eq well posed}, keeping track of the constants to ensure that they are uniform in $r$. 
From the same calculations leading to \eqref{sum of Ik}, we have\begin{align}
\frac{\dd}{\dd t
}\left\Vert \Delta^{2} \psi_{r}(t,\cdot)\right\Vert_{2}^{2}\lesssim \sum_{k=0}^{4} \int_{\mathbb{T}^{d}} \left\vert \nabla^{k}(\chi_{r}\star V_{r})\nabla^{4-k}\psi_{r}\Delta^{2}\psi_{r}\right\vert(t,x) \ \dd x \coloneqq \sum_{k=0}^{4}I_{k}. \label{sum of Iksecond}\end{align}
We proceed by estimating each one of $I_{k}$:
\begin{align*}
I_{0}&=\int_{\mathbb{T}^{d}} \big\vert (\chi_{r}\star V_{r})(t,x)\big\vert\,
\big\vert \Delta^{2} \psi_{r} (t,x) \big\vert^{2}\ \dd x\leq \left\Vert V_{r}\right\Vert_{L^\infty_{t,x}}\| \Delta^{2}\psi_{r}(t,\cdot)\|_{2}^{2},\\
I_{1}&=\int_{\mathbb{T}^{d}} \big| \nabla (\chi_{r}\star V_{r})(t,x)| \big|\,\big|\nabla \Delta \psi_{r}(t,x)\Delta^{2} \psi_{r} (t,x)\big| \ \dd x\\
&\leq
\| \nabla V_{r}\|_{L^\infty_{t,x}}\big(\| \nabla \Delta \psi_{r}(t,\cdot)\|_{2}^{2}+\| \Delta^{2}\psi_{r}(t,\cdot)\|^{2}_{2}\big),\\
I_{2}&=\int_{\mathbb{T}^{d}} \big| \Delta( \chi_{r}\star V_{r})(t,x)\big|\,\big| (\Delta \psi_{r}\Delta^{2} \psi_{r}) (t,x)\big| \, \dd x \\
&\leq
\| \Delta V_{r}\|_{L^\infty_{t,x}}\big(\| \Delta \psi_{r}(t,\cdot)\|_{2}^{2}+\| \Delta^{2}\psi_{r}(t,\cdot)\|^{2}_{2}\big).
\end{align*}
Using Lemma \ref{existence uniqueness for hat{U}}, \eqref{propgation of H^2}, and the Sobolev embedding $H^{2}(\mathbb{T}^{d})\hookrightarrow L^{\infty}(\mathbb{T}^{d})$, we see that
\begin{align}
\left\Vert V_{r}\right\Vert_{L^{\infty}_{t,x}}+\left\Vert \Delta V_{r}\right\Vert_{L^{\infty}_{t,x}}\leq \left\Vert V_{r}\right\Vert_{L^{\infty}_{t,x}}+\left\Vert \psi_{r}\right\Vert_{L^{\infty}_{t,x}}^{2}+\left\Vert e^{V_{r}}\right\Vert_{\infty}\lesssim_{T,d,\left\Vert \psi^{\mathrm{in}}\right\Vert_{H^{2}(\mathbb{T}^{d})}} 1, \label{H2infty norm}
\end{align}
so that
\begin{align}
I_{0}+I_{1}+I_{2}\lesssim_{T,d,\left\Vert \psi^{\mathrm{in}}\right\Vert_{H^{2}(\mathbb{T}^{d})}} \big(\| \Delta \psi_{r}(t,\cdot)\|_{2}^{2}+\| \Delta^{2}\psi_{r}(t,\cdot)\|_{2}^{2}\big). \label{IOI1I2}
\end{align}
In order to bound $I_{3}$ and $I_{4}$, note that
\begin{align*}
-\nabla \Delta V_{r}=\chi_{r}\star \nabla \left\vert \psi_{r}\right\vert^{2}-e^{V_{r}}\nabla V_{r}
\end{align*}
and
\begin{align*}
-\Delta^{2}V_{r}=  \chi_{r}\star \Delta\left\vert \psi_{r}\right\vert^{2}-\Delta(e^{V_{r}})=  \chi_{r}\star \Delta\left\vert \psi_{r}\right\vert^{2}-e^{V_{r}}\left\vert \nabla V_{r}\right\vert^{2}-e^{V_{r}}\Delta V_{r}.
\end{align*}
Therefore, we have
\begin{align*}
I_{4}&\leq \| \Delta^{2}\psi_{r}(t,\cdot)\|_{2}^{2}+\| (\chi_{r}\star \Delta^{2}V_{r})(t,\cdot)\|_{\infty}^{2}\leq \| \Delta^{2}\psi_{r}(t,\cdot)||_{2}^{2}+\| \Delta^{2}V_{r}(t,\cdot)\|_{\infty}^{2}\\
&\leq \| \Delta^{2}\psi_{r}(t,\cdot)||_{2}^{2}+\| (\chi_{r}\star \Delta| \psi_{r}|^{2})(t,\cdot)\|_{\infty}+\| (e^{V_{r}}| \nabla V_{r}|^{2})(t,\cdot)\|_{\infty}
+\| (e^{V_{r}}\Delta V_{r})(t,\cdot)\|_{\infty}\\
&\coloneqq I_{40}+I_{41}+I_{42}+I_{43}.
\end{align*}
First, using \eqref{interpo ine}, we obtain
\begin{align}
I_{41}&\leq \|\Delta| \psi_{r}|^{2}(t,\cdot)\|_{\infty}=2\| \psi_{r}(t,\cdot)\|_{\infty} \|\Delta \psi_{r}(t,\cdot)\|_{\infty}+2\| \nabla \psi_{r}(t,\cdot)\|_{\infty}^{2}\notag\\
&\leq
\| \psi_{r}(t,\cdot)\|_{\infty}^{2}+\| \Delta \psi_{r}(t,\cdot)\|_{\infty}^{2}+2\| \nabla \psi_{r}(t,\cdot)\|_{\infty}^{2}\notag\\
&\lesssim_{d} 1+\| \Delta{\psi_{r}}(t,\cdot)\|_{2}^{2}+\| \Delta^{2}{\psi_{r}}(t,\cdot)\|_{2}^{2}+\| \nabla \psi_{r}(t,\cdot)\|_{2}^{2}+\| \nabla \Delta \psi_{r}(t,\cdot)\|_{2}^{2} \notag\\
&\lesssim_{d} 1+\| \Delta^{2}{\psi_{r}}(t,\cdot)\|_{2}^{2}. \label{I41}
\end{align}
Secondly, note that, by Lemma \ref{existence uniqueness for hat{U}} (or alternatively by \eqref{H2infty norm}), 
we have
\begin{align*}
\Vert e^{V_{r}}\Vert_{\infty}\lesssim_{T,d,\Vert \psi^{\mathrm{in}}\Vert_{H^{2}(\mathbb{T}^{d})}} 1,
\qquad
\Vert \Delta V_{r}(t,\cdot)\Vert_{\infty}\lesssim_{T,d,\Vert \psi^{\mathrm{in}}\Vert_{H^{2}(\mathbb{T}^{d})}} 1.
\end{align*}
It follows that
\begin{align}
I_{42}+I_{43}\lesssim_{T,d,\left\Vert \psi^{\mathrm{in}}\right\Vert_{H^{2}(\mathbb{T}^{d})}} 1+\left\Vert \nabla V_{r}\right\Vert_{\infty}^{2}+\left\Vert \Delta V_{r}\right\Vert_{\infty}^{2}\lesssim_{T,d,\left\Vert \psi^{\mathrm{in}}\right\Vert_{H^{2}(\mathbb{T}^{d})}} 1. \label{I42I43}  \end{align}
Gathering \eqref{I41} with \eqref{I42I43} shows that
\begin{align*}
I_{4}\lesssim_{T,d,\left\Vert \psi^{\mathrm{in}}\right\Vert_{H^{2}(\mathbb{T}^{d})}} \big(1+\left\Vert \Delta^{2}\psi_{r}(t,\cdot)\right\Vert_{2}^{2} \big).
\end{align*}
It follows from the same considerations that
\begin{align*}
I_{3}\lesssim_{T,d,\left\Vert \psi^{\mathrm{in}}\right\Vert_{H^{2}(\mathbb{T}^{d})}}\big(1+\left\Vert \Delta^{2}\psi_{r}(t,\cdot)\right\Vert_{2}^{2} \big),
\end{align*}
which implies that
\begin{align}
I_{3}+I_{4}\lesssim_{T,d,\left\Vert \psi^{\mathrm{in}}\right\Vert_{H^{2}(\mathbb{T}^{d})}} \big(1+\left\Vert \Delta^{2}\psi_{r}(t,\cdot)\right\Vert_{2}^{2} \big). \label{I3I4}
\end{align}
Thus, the combination of \eqref{IOI1I2} with \eqref{I3I4} yields
\begin{align*}
\frac{\dd}{\dd t
}\left\Vert \Delta^{2} \psi_{r}(t,\cdot)\right\Vert_{2}^{2}\lesssim_{T,d,\left\Vert \psi^{\mathrm{in}}\right\Vert_{H^{2}(\mathbb{T}^{d})}} 1+\left\Vert \Delta^{2}\psi_{r}(t,\cdot)\right\Vert_{2}^{2}    \end{align*}
and therefore
\begin{align}
\| \psi_{r}(t,\cdot)\|_{H^{4}(\mathbb{T}^{d})}\leq e^{Ct}\| \psi^{\mathrm{in}}\|_{H^{4}(\mathbb{T}^{d})}\leq \mathcal{M}_{\mathrm{in}}'', \label{propgation of H^4}
\end{align}
where $C=C(T,d,\| \psi^{\mathrm{in}}\|_{H^{4}(\mathbb{T}^d)})$ and 
$\mathcal{M}_{\mathrm{in}}''=\mathcal{M}_{\mathrm{in}}''(T,d,\| \psi^{\mathrm{in}}\|_{H^{4}(\mathbb{T}^d)})$.

\medskip
\textbf{4.} \textit{Compactness in $r$}. We utilize the preceding estimates in order to extract a converging subsequence. We have
\begin{align}
\| \psi_{r}(t,\cdot)-\psi_{r}(s,\cdot)\|_{H^{2}}^{2}&\leq
2\left\Vert \int_{t}^{s} \partial_{\tau}\psi_{r}(\tau,\cdot)\,\dd \tau\right\Vert_{2}^{2}+2\left\Vert \int_{t}^{s} \partial_{\tau}\Delta\psi_{r}(\tau,\cdot)\,\dd \tau\right\Vert_{2}^{2} \notag\\
&\leq 2\left\vert t-s \right\vert^{2}\Big(\,\,\underset{t\in [0,T]}{\sup} \left\Vert \partial_{t}\psi_{r}(t,\cdot)\right\Vert_{2}^{2}+\underset{t\in [0,T]}{\sup}\left\Vert \partial_{t}\Delta \psi_{r}(t,\cdot) \right\Vert_{2}^{2}\Big). \label{AS}
\end{align}
Using \eqref{mollifiedequation} and  \eqref{propgation of H^2}, we have
\begin{align*}
 \left\Vert \partial_{t}\psi_{r}(t,\cdot)\right\Vert_{2}&\leq \left\Vert \Delta\psi_{r}(t,\cdot)\right\Vert_{2}+ \left\Vert (\chi_{r}\star V_{r})(t,\cdot)\psi_{r}(t,\cdot)\right\Vert_{2}\nonumber\\
& \leq
C(T,\Vert \psi^{\mathrm{in}}\Vert_{H^{2}},d)+ \left\Vert (\chi_{r}\star V_{r})(t,\cdot)\psi_{r}(t,\cdot)\right\Vert_{2}.
\end{align*}
In addition, recall that, by \eqref{step1 nabla Psi},
\begin{align*}
\left\Vert (\chi_{r}\star V_{r})(t,\cdot)\psi_{r}(t,\cdot)\right\Vert_{2}
\leq \left\Vert (\chi_{r}\star V_{r})(t,\cdot)\right\Vert_{\infty}
\leq \left\Vert V_{r}(t,\cdot)\right\Vert_{\infty}\leq  C(\Vert \psi^{\mathrm{in}}\Vert_{H^{1}},d).
\end{align*}
It follows that
\begin{align}
\underset{t \in [0,T]}{\sup}\left\Vert \partial_{t}\psi_{r}(t,\cdot)\right\Vert_{2}\lesssim_{T,d,\left\Vert \psi^{\mathrm{in}}\right\Vert_{H^2}}1. \label{H2 ARZELA}
\end{align}
Moreover, we have
\begin{align*}
&\| \partial_{t}\Delta \psi_{r}(t,\cdot)\|_{2}\\
&\leq \| \Delta^{2} \psi_{r}(t,\cdot)\|_{2}+\| \Delta(\chi_{r}\star V_{r}\psi_{r})(t,\cdot)\|_{2}\\
&\leq \| \Delta^{2} \psi_{r}(t,\cdot)\|_{2}+\| V_{r}(t,\cdot)\|_{\infty}\| \Delta \psi_{r}(t,\cdot)\|_{2}+\| \Delta V_{r}(t,\cdot)\|_{\infty}
+2\| \nabla V_{r}(t,\cdot)\|_{\infty}\| \nabla \psi_{r}(t,\cdot)\|_{2}.
\end{align*}
Thanks to \eqref{H2infty norm}, \eqref{propgation of H^4}, 
and the last inequality above, we see that
\begin{align}
\underset{t\in [0,T]}{\sup}\left\Vert \partial_{t}\Delta \psi_{r}(t,\cdot) \right\Vert_{2}^{2}\lesssim_{T,d,\left\Vert \psi^{\mathrm{in}}\right\Vert_{H^4}}1   \label{H4 ARZELA}
\end{align}
Substituting \eqref{H2 ARZELA} and \eqref{H4 ARZELA} into \eqref{AS} gives
\begin{align*}
\left\Vert \psi_{r}(t,\cdot)-\psi_{r}(s,\cdot)\right\Vert_{2}\leq C\left\vert t-s \right\vert,
\end{align*}
where $C=C(T,d,\Vert \psi^{\mathrm{in}}\Vert_{H^4})$.
Therefore, it follows from the Arzela-Ascoli theorem that there are both
a sequence $r_{k}\rightarrow 0$ as $k\rightarrow \infty$ and some $\psi\in C([0,T];H^{2}(\mathbb{T}^{d}))$
such that
\[
\left\Vert \psi_{r_{k}}(t,\cdot)-\psi(t,\cdot)\right\Vert _{C([0,T];H^{2}(\mathbb{T}^{d}))}\underset{k\rightarrow\infty}{\longrightarrow}0
\]
with the estimate:
\begin{align}
\underset{t\in [0,T]}{\sup}\left\Vert \psi(t,\cdot)\right\Vert_{H^{2}}\leq \mathcal{M}'_\mathrm{in}  \label{H2 est for final limit}
\end{align}
where $\mathcal{M}'_\mathrm{in}$ is as in \eqref{propgation of H^2}.

\medskip
\textbf{5}. \textit{$\psi(t,\cdot)$ is a solution}.
It remains to show that the limit $\psi(t,\cdot)$ is a solution to system \eqref{well posedness of hartree vpme}.
Note that $\psi(t,\cdot)\in L^{\infty}(\mathbb{T}^d)$, because of the embedding $H^{2}(\mathbb{T}^{d})\hookrightarrow L^\infty(\mathbb{T}^{d})$. Therefore, by Lemma \ref{existence uniqueness for hat{U}}, 
there exists a unique solution $V$  of the equation:
\begin{align*}
-\Delta V=\left\vert \psi \right\vert^{2}-e^{V}.
\end{align*}
To show that $\psi$ is a solution, the key component is to show that
\[
\left\Vert (\chi_{r_{k}}\star V_{r_{k}})(t,\cdot)\psi_{r_{k}}(t,\cdot)-V(t,\cdot)\psi(t,\cdot)\right\Vert _{2}\underset{k\rightarrow\infty}{\longrightarrow}0.
\]
By the triangle inequality, we have
\begin{align*}
&\left\Vert  (\chi_{r_{k}}\star V_{r_{k}})(t,\cdot)\psi_{r_{k}}(t,\cdot)-V(t,\cdot)\psi(t,\cdot)\right\Vert _{2}\\
&\leq \left\Vert (\chi_{r_{k}}\star V_{r_{k}})(t,\cdot)\psi_{r_{k}}(t,\cdot)-(\chi_{r_{k}}\star V)(t,\cdot)\psi(t,\cdot)\right\Vert_{2}\\
&\quad\,\,+
\left\Vert (\chi_{r_{k}}\star V)(t,\cdot)\psi(t,\cdot)- V(t,\cdot)\psi(t,\cdot)\right\Vert_{2}
\\
&\leq\left\Vert (\chi_{r_{k}}\star V)(t,\cdot)\left(\psi_{r_{k}}(t,\cdot)-\psi(t,\cdot)\right)\right\Vert _{2}\\
&\quad\,\,+\left\Vert \left(\chi_{r_{k}}\star V_{r_{k}}-\chi_{r_{k}}\star V\right)(t,\cdot)\psi_{r_{k}}(t,\cdot)\right\Vert _{2}\\
&\quad\,\,+
\left\Vert \psi(t,\cdot)\left((\chi_{r_{k}}\star V)(t,\cdot)- V(t,\cdot)\right)\right\Vert_{2}
\coloneqq \mathcal{I}+\mathcal{J}+\mathcal{L}.
\end{align*}
The term $\mathcal{I}$ is mastered as 
\[
\mathcal{I}\leq\| V(t,\cdot)\|_{\infty}\| \psi_{r_{k}}(t,\cdot)-\psi(t,\cdot)\|_{2}\leq\big(\| \tilde{V}(t,\cdot)\|_{\infty}+\| \hat{V}(t,\cdot)\|_{\infty}\big)\|\psi_{r_{k}}(t,\cdot)-\psi(t,\cdot)\|_{2}.
\]
By the Sobolev embedding and \eqref{H2 est for final limit},
we have
\begin{align*}
 \| \tilde{V}(t,\cdot)\| _{\infty}&\leq\| K\|_{2}\big(1+\left\Vert \psi(t,\cdot)\right\Vert _{4}^{2}\big)\nonumber\\
&\leq\left\Vert K\right\Vert _{2}\big(1+C_{S}\underset{t\in[0,T]}{\sup}\left\Vert \psi(t,\cdot)\right\Vert _{H^{1}(\mathbb{T}^{d})}^{2}\big)\lesssim_{T,d,\left\Vert \psi^{\mathrm{in}}\right\Vert_{H^2}} 1.
\end{align*}
In addition, Lemma \ref{existence uniqueness for hat{U}} and \eqref{H2 est for final limit} entail
\[
\underset{t\in[0,T]}{\sup}\| \hat{V}(t,\cdot)\|_{\infty}\lesssim_{T,d,\left\Vert \psi^{\mathrm{in}}\right\Vert_{H^2}} 1.
\]
Thus, we obtain the estimate:
\[
\mathcal{I}\lesssim_{T,d,\left\Vert \psi^{\mathrm{in}}\right\Vert_{H^2}} \underset{t\in[0,T]}{\sup}\left\Vert \psi_{r_{k}}(t,\cdot)-\psi(t,\cdot)\right\Vert _{2}\underset{k\rightarrow\infty}{\longrightarrow}0.
\]
To treat $\mathcal{J}$, we note that
\begin{align*}
 \mathcal{J}\leq \left\Vert (\chi_{r_{k}}\star V_{r_{k}}-\chi_{r_{k}}\star V)(t,\cdot)\psi_{r_{k}}(t,\cdot)\right\Vert_{2} \leq \left\Vert \psi_{r_{k}}(t,\cdot)\right\Vert_{\infty}\left\Vert (V_{r_{k}}-V)(t,\cdot) \right\Vert_{2}.
\end{align*}
Using Lemma \ref{stability with respect to W_2},  \eqref{propgation of H^2}, and \eqref{H2 est for final limit}, 
we see that
\begin{align*}
\left\Vert V_{r_{k}}(t,\cdot)-V(t,\cdot) \right\Vert_{2}\lesssim_{T,d,\left\Vert \psi^{\mathrm{in}}\right\Vert_{H^2}} W_{2}^{2}\big((\chi_{r_{k}}\star \left\vert \psi_{r_{k}}\right\vert^{2})(t,\cdot),\left\vert\psi\right\vert^{2}(t,\cdot)\big)\underset{k\rightarrow\infty}{\rightarrow}0.
\end{align*}
In addition, we have
\begin{align*}
 \left\Vert \psi_{r_{k}}(t,\cdot) \right\Vert_{\infty}\leq  \left\Vert \psi_{r_{k}}(t,\cdot) \right\Vert_{H^{2}(\mathbb{T}^{d})}\lesssim_{T,d,\left\Vert \psi^{\mathrm{in}}\right\Vert_{H^2}}1,
\end{align*}
which eventually yields that
\begin{align*}
\mathcal{J}\underset{k\rightarrow\infty}{\longrightarrow}0.
\end{align*}
Finally, again by \eqref{H2 est for final limit}, we obtain
\begin{align*}
\mathcal{L}&\leq \left\Vert \psi(t,\cdot)\right\Vert_{\infty} \left\Vert (\chi_{r_{k}}\star V)(t,\cdot)-V(t,\cdot)\right\Vert_{2}\nonumber\\
&\lesssim_{T,d,\left\Vert \psi^{\mathrm{in}}\right\Vert_{H^2}}\left\Vert (\chi_{r_{k}}\star V)(t,\cdot)-V(t,\cdot)\right\Vert_{2}\underset{k \rightarrow \infty}{\rightarrow}0. \end{align*}

This completes the proof of Theorem \ref{wellposed intro}.
\qed

\bigskip
\bigskip
\noindent{\bf Acknowledgments.}
We are grateful to Megan Griffin-Pickering and Enno Lenzmann for helpful discussions. IBP, GQC, and DF acknowledge support by EPSRC grant EP/V051121/1. GQC was also partially supported by EPSRC grants EP/L015811/1 and EP/V008854. DF was also partially supported by the Fundamental Research Funds for the Central Universities No. 2233100021 and No. 2233300008. For the purpose of open access, the authors have applied a CC BY public copyright license to any Author Accepted Manuscript (AAM) version
arising from this submission.

\bigskip
\medskip
\noindent{\bf Conflict of Interest:} The authors declare that they have no conflict of interest.
The authors also declare that this manuscript has not been previously published,
and will not be submitted elsewhere before your decision.

\bigskip
\noindent{\bf Data availability:} Data sharing is not applicable to this article as no datasets were generated or analyzed during the current study.

\bigskip

\end{document}